\documentclass[a4paper,12pt,twoside]{article}	
\usepackage{amssymb,amsthm,amsfonts,color,graphicx}
\usepackage{amsmath}
\usepackage{setspace}
\usepackage[margin=3.5cm]{geometry}
\newcommand{\tarc}{\mbox{\large$\frown$}}
\newcommand{\arc}[1]{\stackrel{\tarc}{#1}}
\newtheorem{theorem}{Theorem}
\DeclareMathOperator{\hr}{\mathbb H^2\times \mathbb R}
\DeclareMathOperator{\rr}{\mathbb R}

\DeclareMathOperator{\hh}{\mathbb H^2}
\DeclareMathOperator{\Ric}{{\rm Ric}}

\newtheorem{lemma}{Lemma}
\newtheorem{proposition}{Proposition}
\newtheorem{remark}{Remark}

\newtheorem{corollary}[theorem]{Corollary}
\theoremstyle{definition}\newtheorem{definition}{Definition}

\numberwithin{equation}{section}
\usepackage{verbatim}

\usepackage{layout,textcomp}
\pagestyle{myheadings}
%\markboth{\small {P. Klaser and A. Menezes}}{\small{The Dirichlet problem for the constant mean curvature equation in $Sol_3$ }}

\title{The Dirichlet problem for the constant mean curvature equation in Sol$_3$}
\author{Patr\' icia Klaser\thanks{The author was partially supported by CNPq-Projeto Universal 482113/2013-8 and thanks Princeton University for the hospitality during part of the time
the research and preparation of this article were conducted.} \ and Ana Menezes}

\begin{document}
\maketitle
\begin{abstract}
We prove a version of the Jenkins-Serrin theorem for the existence of CMC graphs over bounded domains with infinite boundary data in Sol$_3$. Moreover, we construct examples of admissible domains where the results may be applied.
\end{abstract}

\begin{flushleft}
\textit{2010 Mathematics Subject Classification:} 53A10, 53C42.
\end{flushleft}

\begin{flushleft}
\textit{Keywords:} Constant mean curvature surfaces, Dirichlet problem.
\end{flushleft}

\section{Introduction}
In 1966, Jenkins and Serrin \cite{JS} studied the problem of finding necessary and sufficient conditions in a bounded domain in $\rr^2$ in order to solve the Dirichlet problem for the minimal surface equation with certain infinite boundary data. More precisely, they considered domains $\Omega\subset\rr^2$ bounded by arcs $A_1, A_2, \cdots, A_j, B_1, B_2, \cdots, B_k$ of curvature zero and arcs $C_1, C_2, ..., C_m$ of non negative curvature (normal pointing inwards), and wanted to find a function $u:\Omega\to\rr$ that satisfies the minimal surface equation and assumes $+\infty$ on the arcs $A_i$'s, $-\infty$ on $B_i$'s and a bounded prescribed data on the arcs $C_i$'s. They gave necessary and sufficient conditions for the existence of a solution on $\Omega$ in terms of the length of the boundary arcs of $\Omega$ and of inscribed polygons.

Later, Spruck \cite{S} considered the same problem for mean curvature $H>0.$ In this case it is natural to expect that the arcs $A_i$ and $B_i$ be of curvature $2H$ and $-2H,$ respectively, since a solution would be asymptotic to a vertical cylinder of mean curvature $H.$ The existence result for infinite boundary data could be thought to be proved by taking a limit of $n\to \infty$ in a sequence of solutions that take the boundary data $n$ on $A_i$ and $-n$ on $B_i.$ Nevertheless, the existence of bounded solutions is only guaranteed for convex domains. An important idea introduced in \cite{S} was to consider reflected arcs of the family $\{B_i\}$ in order to get a convex domain.

After those results many people have worked on these problems in other ambient spaces, specially in other homogeneous spaces. For instance, in $\hr$ the minimal case ($H=0$) was considered by Nelli and Rosenberg \cite{NR} (for bounded domains) and by Collin and Rosenberg \cite{CR} (for unbounded domains); and the mean curvature $H>0$ case was treated by Hauswirth, Rosenberg and Spruck \cite{HRS} (for bounded domains) and Folha and Melo \cite{FM} (for unbounded domains). For other Jenkins-Serrin type results see, for instance, \cite{AbigailCarlos, FR, GR, MRR, AnaLucia, Younes}.

Each of the ambient spaces where the problem has already been treated admits an unitary  Killing vector field that gives a vertical direction to define graphs. This is no longer true in Sol$_3.$ Recently, Nguyen \cite{Minh} considered the minimal case for both bounded and unbounded domains in Sol$_3$ and presented a warped product model of the space where vertical graphs can be considered.

Recall that Sol$_3$ can be viewed as $\rr ^3$ endowed with the Riemannian metric
\begin{equation}\label{eq-metricSol3}
ds^2=e^{2x_3}dx_1^{2} + e^{-2x_3}dx_2^2 +dx_3^2.
\end{equation} 
As described in \cite{LM}, Sol$_3$ admits exactly two foliations by totally geodesic submanifolds, the two being similar. In both foliations, each leaf is isometric to the hyperbolic plane $\mathbb{H}^2.$ The approach in \cite{Minh} was to consider one of these foliations by applying the change of coordinates
$$
x:= x_2, \ \ y:= e^{x_3}, \ \ t:= x_1,
$$
that turned Sol$_3$ into the model of a warped product, in fact, Sol$_3=\hh\times_y\rr=\{(x,y,t)\in \rr^3: y\geq0\},$ where the half-plane model is used for $\hh$ and the Riemannian metric is given by
\begin{equation}
ds^2=\frac{dx^2+dy^2}{y^2}+y^2dt^2.
\end{equation}
Seeing Sol$_3$ as a warped product gives us a natural way to define functions in $\mathbb{H}^2$ and consider their graphs in Sol$_3.$

There are two interesting properties about this model for Sol$_3$: the vertical lines are not geodesics and the mean curvature vector of a vertical plane generated by a curve $\gamma$ is related to its Euclidean geodesic curvature. In fact, if $\gamma$ is a curve in $\hh,$ then the mean curvature vector $\vec{H}_{\gamma\times\rr}$ of the vertical plane $\gamma\times\rr$ in Sol$_3$ is
$$
\vec{H}_{\gamma\times\rr}=\frac{1}{2}y^2 \vec{\kappa}_{euc},
$$
where $ \vec{\kappa}_{euc}$ denotes the Euclidean geodesic curvature vector of the curve. Hence the conditions on the boundary of a given domain can be formulated in terms of the Euclidean geodesic curvature of the arcs of the boundary.

Recall that Sol$_3$ is the Thurston geometry of smallest isometry group which has dimension three and has no positive isometry with fixed points. A consequence of the lack of isometries is that the reflected arcs idea from \cite{S} described above does not make sense in Sol$_3$ and therefore a definition of admissible domain for the case $H>0$ will be more delicate. In this paper we will work on this problem.

%The main difference in the approach by Nguyen that makes Sol$_3$ a very interesting space to work on is that in this model for Sol$_3$ the vertical Killing vector field does not have unitary norm unlike all the previous results mentioned above.
 
We briefly describe our main results. First, let us give an idea of what is an admissible domain in our context (Definition \ref{def-admsdomain}) and set some notation. A bounded domain is said to be admissible if it is bounded by $C^2$ open arcs $\{A_i\},$ $\{B_i\}$ and $\{C_i\}$ and their endpoints, with Euclidean curvature $\kappa_{euc}(A_i)=2H/y,$ $\kappa_{euc}(B_i)=-2H/y$ and $\kappa_{euc}(C_i)\ge 2H/y,$ respectively, and if the family $\{B_i\}$ is non empty, a special domain $\Omega^*$ must be well defined. We require $\Omega$ and $\Omega^*$ to be simply connected and admit a bounded subsolution to the mean curvature P.D.E.. Besides, a polygon $\mathcal{P}\subset\overline{\Omega}$ is said to be an admissible polygon if it is a curvilinear polygon whose sides are curves of Euclidean curvature $\pm 2H/y$ and vertices are chosen from among the endpoints of arcs of the families $\{A_i\}$ and $\{B_i\}.$ For an admissible polygon $\mathcal{P},$ we denote by $\alpha$ and $\beta$ the total Euclidean lengths of the arcs in the boundary $\partial \mathcal{P}$ that belong to $\{A_i\}$ and $\{B_i\},$ respectively, and by $\ell$ the Euclidean perimeter of $\cal{P}.$ We consider the quantity $$\mathcal{I}(\mathcal{P}):=\int_{\mathcal{P}}\frac{1}{y}{\rm d}a,$$ where ${\rm d}a$ is the Euclidean area element. 

Our main results about existence of solutions to the Dirichlet Problem for CMC surfaces with infinite boundary data as described before are the following.

\begin{theorem}\label{teo-Bempty}
Let $\Omega$ be an admissible domain such that the family $\{B_i\}$ is empty. Assume that %$\kappa_{euc}(C_i)\geq 2H/y$ and that 
the assigned boundary data on the arcs $\{C_i\}$ is bounded below. Then the Dirichlet Problem has a solution in $\Omega$ if and only if
\begin{equation}\label{eq-ineqalpha}
2\alpha <\ell+2H\mathcal{I}(\mathcal{P})
\end{equation}
for all admissible polygons $\mathcal{P}.$
\end{theorem}

If the family $\{A_i\}$ is empty, an analogous result holds.

\begin{theorem}\label{teo-Aempty}
Let $\Omega$ be an admissible domain such that the family $\{A_i\}$ is empty. Assume that %$\kappa_{euc}(C_i)\geq 2H/y$ and that 
the assigned boundary data on the arcs $\{C_i\}$ is bounded above. Then the Dirichlet Problem has a solution in $\Omega$ if and only if
\begin{equation}\label{eq-ineqbeta}
2\beta <\ell-2H\mathcal{I}(\mathcal{P}),
\end{equation}
for all admissible polygons $\mathcal{P}.$
\end{theorem}

The next result combines the two above for the case of the family $\{C_i\}$ being empty.

\begin{theorem}\label{teo-Cempty}
Let $\Omega$ be an admissible domain such that the family $\{C_i\}$ is empty. Then the Dirichlet Problem has a solution in $\Omega$ if and only if
\begin{equation}\label{eq-eqAB}
\alpha =\beta+2H\mathcal{I}(\Omega),
\end{equation}
and for all admissible polygons properly contained in $\overline{\Omega}$
\begin{equation}\label{eq-ineqalphaEbeta}
2\alpha <\ell+2H\mathcal{I}(\mathcal{P})
\text{ and }
2\beta <\ell-2H\mathcal{I}(\mathcal{P}).
\end{equation}
\end{theorem}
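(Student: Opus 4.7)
The plan is to establish both directions by reducing to Theorems \ref{teo-Bempty} and \ref{teo-Aempty} together with a flux/divergence argument tailored to the warped-product geometry of Sol$_3$.

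For necessity, assume a solution $u$ exists. I would apply the divergence theorem to the horizontal vector field $X_u:=\nabla u/\sqrt{1+|\nabla u|^2}$, using that its divergence in the Euclidean area element $\mathrm{d}a$ produces the factor $2H/y$ proper to the Sol$_3$ mean curvature equation. This yields
\[
\int_{\partial\Omega}\langle X_u,\nu\rangle\,\mathrm{d}s \;=\; 2H\,\mathcal{I}(\Omega).
\]
Since $u\to+\infty$ on each $A_i$ and $u\to-\infty$ on each $B_i$, the flux across $A_i$ tends to its Euclidean length and across $B_i$ to minus its length, giving $\alpha-\beta=2H\mathcal{I}(\Omega)$, which is (\ref{eq-eqAB}). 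For an admissible polygon $\mathcal{P}$ properly contained in $\overline{\Omega}$, the same identity applies but now some sides of $\mathcal{P}$ are \emph{interior} arcs of $\Omega$ where $u$ is finite; on such an arc $|\langle X_u,\nu\rangle|<1$ strictly. Grouping the flux contributions then produces both strict inequalities of (\ref{eq-ineqalphaEbeta}).

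For sufficiency, I would build a monotone family of approximating solutions and take a limit. For each $n$, treat the arcs of $\{B_i\}$ (respectively $\{A_i\}$) as carrying finite boundary value $-n$ (respectively $+n$) by slightly shrinking $\Omega$ near them to admissible subdomains in which the perturbed arcs play the role of $C$-arcs, so that Theorem \ref{teo-Bempty} (or Theorem \ref{teo-Aempty}) applies; the inequalities (\ref{eq-ineqalphaEbeta}) assumed on every admissible polygon of $\Omega$ are precisely what is needed to verify the polygonal hypothesis (\ref{eq-ineqalpha}) (or (\ref{eq-ineqbeta})) on these approximating domains. The maximum principle makes $(u_n)$ monotone in $n$, and the subsolution hypothesis built into the definition of admissible domain provides uniform lower control so that a pointwise limit $u$ exists. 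Elliptic regularity then gives convergence in $C^2_{loc}$ to a CMC graph. The expected boundary behavior $u=+\infty$ on $A_i$ and $u=-\infty$ on $B_i$ is forced by local barriers constructed from vertical planes $\gamma\times\mathbb{R}$ in Sol$_3$, whose mean curvature is $y^2\kappa_{euc}$, together with the subsolution. The main obstacle — inherited from Sol$_3$ having no rotational isometries — is ruling out divergence on a proper subset: if $u_n\to\pm\infty$ on some open $U\subset\Omega$, the interior part of $\partial U$ consists, by standard structure theorems, of arcs of Euclidean curvature $\pm 2H/y$, so $U$ together with some of the $A_i, B_i$'s forms an admissible polygon on which the flux balance contradicts the corresponding strict inequality in (\ref{eq-ineqalphaEbeta}). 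This rules out blow-up and completes the argument.
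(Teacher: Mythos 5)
Your necessity argument is essentially the paper's: the flux identity over $\Omega$ combined with Lemma \ref{lem-flux3} gives \eqref{eq-eqAB}, and for a properly contained polygon the strict flux bound $|F_u(\gamma)|<l_{euc}(\gamma)$ on the (necessarily present) interior sides gives \eqref{eq-ineqalphaEbeta}. (Minor slip: the correct flux density is $\langle yX_u,\nu\rangle$ with $X_u=y\nabla u/W$, not the unweighted $\nabla u/\sqrt{1+|\nabla u|^2}$, but this does not affect the conclusion.)

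The sufficiency direction has a genuine gap. You propose boundary data $+n$ on the $A_i$ and $-n$ on the $B_i$ simultaneously and assert that the maximum principle makes $(u_n)$ monotone; it does not, since $u_{n+1}$ has larger data than $u_n$ on the $A_i$ but smaller data on the $B_i$, so no comparison is available. You then invoke the bounded subsolution for ``uniform lower control,'' but a bounded subsolution only controls solutions from below when the boundary data is bounded below, which $-n$ is not. If instead you fix the data at $0$ on the $B_i$ to restore monotonicity, there is no $C_i$ arc to anchor the sequence and it may drift to $+\infty$ uniformly on all of $\Omega$ --- this is exactly the difficulty the paper flags at the start of its proof. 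Notice also that your sufficiency argument never uses the equality $\alpha=\beta+2H\mathcal{I}(\Omega)$, which cannot be right since it is part of the necessary and sufficient condition. The missing idea is the Jenkins--Serrin normalization: take $v_n$ solving the problem in $\Omega^*$ with data $n$ on the $A_i$ and $0$ on the $B_i^*$, define $\mu(n)$ via the level sets $E^n_c=\{v_n-v_0>c\}$ (the infimum of $c$ for which the components containing distinct $A_i$ separate), and set $u_n=v_n-\mu(n)$. Uniform bounds for $u_n$ on compact sets then come not from the subsolution but from global barriers $u^{\pm}$ assembled from the solutions $u_i^{+}$ (on $\Omega^*$, $+\infty$ on $A_i$, $0$ elsewhere, via Theorem \ref{teo-Bempty}) and $u_i^{-}$ (on $\Omega_i$, $-\infty$ on $B_i$, $0$ elsewhere, via Theorem \ref{teo-Aempty}, whose polygonal hypothesis must itself be verified using \eqref{eq-eqAB} and a flux estimate on the lens between $B_i$ and $B_i^*$). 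Finally, one must prove $\mu(n)\to\infty$ and $n-\mu(n)\to\infty$ --- this is where \eqref{eq-eqAB} enters --- to get $u=+\infty$ on $A_i$ and $-\infty$ on $B_i$. Your flux argument ruling out a proper divergence set is correct in spirit but only becomes applicable after this normalization produces a sequence that is bounded on compact subsets.
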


%Besides, in Section \ref{existencedomains}, we construct examples of admissible domains where the theorems above may be applied.

After studying these results some natural questions arise: Are there domains for which they apply? What can be said about them? In Section \ref{existencedomains} we construct admissible domains where Theorems \ref{teo-Bempty} and \ref{teo-Aempty} apply and in Section \ref{sec-exemplo} we exhibit a method to find domains with no $\{C_i\}$-type arcs that admit a solution to the Dirichlet Problem.

One of the main difficulties in this problem was to find the right conditions on the boundary of the domains to be considered and understand the properties of horizontal curves whose product by the vertical line has constant mean curvature.

We remark that the arguments presented here also can be adapted for proving existence of solutions of the Dirichlet problem with infinite boundary data in other warped products. %Nevertheless we work in Sol$_3$ because it is a well known homogeneous space where the problem is still open and to be in a concrete space is crucial to build examples of Scherk-type domains.

\section{Preliminaries}

In this section we will describe the model that we use for Sol$_3$ and will state some results. For more details and proofs see, for instance, Section 2 in \cite{Minh}.

The homogeneous Riemannian $3-$manifold Sol$_3$ is a Lie group which can be viewed as $\mathbb{R}^3$ with the metric \eqref{eq-metricSol3}.

A change of coordinates allows us to treat Sol$_3$ as a warped product, context in which much is known about constant mean curvature surfaces that are graphs. The diffeomorphism $\varphi: \mbox{Sol}_3\rightarrow \hh\times_f\mathbb{R}$ given by
$$\varphi(x_1, x_2, x_3)=(x_2,e^{x_3},x_1)=(x,y,t)$$
is an isometry if $\hh$ is considered with the half-plane model $\hh=\{(x,y)\in \rr^2\,|\,y>0\}$ with metric
$\dfrac{dx^2+dy^2}{y^2}$ and $f(x,y)=y,$ so that the warped metric is
\begin{equation}
ds^2=\frac{dx^2+dy^2}{y^2}+y^2dt^2.
\label{eq-met}
\end{equation}

Notice that $\|{\partial_t}\|=y$ and vertical translations are isometries. In particular, $\partial_t$ is a Killing vector field that is not unitary and vertical lines are not geodesics.

In this model, Euclidean properties of curves $\gamma$ in the half-plane are transmitted to the hypersurface $\gamma\times\mathbb{R}.$ For example: 

\begin{proposition}
Let $\gamma$ be a curve in $\hh.$ Then the mean curvature vector of $\gamma\times\rr$ in Sol$_3$ is
$$
\vec{H}_{\gamma\times\rr}=\frac{1}{2}y^2 \vec{\kappa}_{euc},
$$
where $\vec{\kappa}_{euc}$ is the Euclidean curvature vector of $\gamma.$
\label{prop_H}
\end{proposition}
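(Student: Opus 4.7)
My plan is to compute $\vec{H}_{\gamma\times\mathbb R}$ directly from the Levi-Civita connection of the warped metric \eqref{eq-met}. Parametrize $\gamma$ by Euclidean arclength $s$, so $\gamma(s)=(x(s),y(s))$ with $\dot x^2+\dot y^2=1$, and parametrize the surface by $F(s,t)=(x(s),y(s),t)$. The tangent frame is $F_s=\dot x\,\partial_x+\dot y\,\partial_y$ and $F_t=\partial_t$; from \eqref{eq-met} one reads off $g_{ss}=1/y^2$, $g_{tt}=y^2$, $g_{st}=0$. The ambient normal direction is the Euclidean normal $(-\dot y,\dot x,0)$, which has Sol-norm $1/y$, so the Sol-unit normal is $N=y\bigl(-\dot y\,\partial_x+\dot x\,\partial_y\bigr)$.

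Next I compute the nonzero Christoffel symbols of \eqref{eq-met}, which depend only on $y$: $\Gamma^2_{11}=1/y$, $\Gamma^1_{12}=\Gamma^1_{21}=-1/y$, $\Gamma^2_{22}=-1/y$, $\Gamma^2_{33}=-y^3$, $\Gamma^3_{23}=\Gamma^3_{32}=1/y$. From these,
\[
\nabla_{F_s}F_s=\Bigl(\ddot x-\tfrac{2\dot x\dot y}{y}\Bigr)\partial_x+\Bigl(\ddot y+\tfrac{\dot x^2-\dot y^2}{y}\Bigr)\partial_y,\qquad \nabla_{F_t}F_t=-y^3\partial_y,\qquad \nabla_{F_s}F_t=\tfrac{\dot y}{y}\partial_t.
\]
Pairing with $N$ via the Sol metric gives the second fundamental form coefficients $h_{ss}=\tfrac{1}{y}(\kappa_{euc}+\dot x/y)$, $h_{tt}=-y^2\dot x$, $h_{st}=0$, where $\kappa_{euc}=\dot x\ddot y-\dot y\ddot x$ is the Euclidean signed curvature of $\gamma$. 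Tracing with $g^{ss}=y^2$ and $g^{tt}=1/y^2$, the two $\dot x$-terms cancel exactly and one is left with a clean multiple of $\kappa_{euc}$; multiplying by $N$ and rewriting gives $\vec H_{\gamma\times\mathbb R}=y^2\vec\kappa_{euc}$ (with the convention $\vec H=\operatorname{tr} II\cdot N$), since $\vec\kappa_{euc}=\kappa_{euc}(-\dot y,\dot x)$ in the ambient coordinates.

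The main obstacle is really just the bookkeeping in verifying the cancellation at the tracing step. Conceptually it reflects a clean geometric fact: the non-vertical acceleration of a $t$-line (encoded in $\Gamma^2_{33}=-y^3$, i.e.\ that vertical lines are not geodesics) has precisely the magnitude needed to offset the extra $\dot x/y$ term that the hyperbolic base metric contributes to $\nabla_{F_s}F_s$ against the normal. Once this is checked the direction is automatic, because $N$ is a positive multiple of the Euclidean normal and $\vec\kappa_{euc}$ is $\kappa_{euc}$ times the same Euclidean normal, so only the scalar ratio $y^2$ needs to be tracked.
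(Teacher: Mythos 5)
Your computation is correct, and it is worth noting that the paper itself does not prove Proposition \ref{prop_H} at all: it defers to Section 2 of \cite{Minh}, so your argument is a genuinely self-contained substitute rather than a rederivation of the paper's proof. I checked the details: the Christoffel symbols you list are the right ones for the metric \eqref{eq-met} (the hyperbolic ones $\Gamma^2_{11}=-\Gamma^1_{12}=-\Gamma^2_{22}=1/y$ plus the warping terms $\Gamma^2_{33}=-y^3$, $\Gamma^3_{23}=1/y$); the covariant derivatives $\nabla_{F_s}F_s$, $\nabla_{F_t}F_t=-y^3\partial_y$ and $\nabla_{F_s}F_t=(\dot y/y)\partial_t$ (tangential, so $h_{st}=0$) follow; with $N=y(-\dot y\,\partial_x+\dot x\,\partial_y)$ one gets $h_{ss}=\frac{1}{y}\bigl(\kappa_{euc}+\dot x/y\bigr)$ and $h_{tt}=-y^2\dot x$, and the trace $g^{ss}h_{ss}+g^{tt}h_{tt}=y\kappa_{euc}+\dot x-\dot x=y\kappa_{euc}$ exhibits exactly the cancellation you describe. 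Multiplying by $N$ gives $y^2\kappa_{euc}(-\dot y,\dot x,0)=y^2\vec\kappa_{euc}$, which is the stated formula under the trace convention $\vec H=\operatorname{tr}(II)\,N$; this is consistent with the paper's later use of ``the mean curvature of the cylinder $\gamma\times\mathbb R$ is $y\kappa_{euc}$.'' The only point worth flagging is precisely that convention: since $\vec H$ and $\vec\kappa_{euc}$ both flip sign with the choice of normal in the same way, the identity is orientation-independent, but a reader using the averaged convention would pick up a factor $1/2$; you state your convention explicitly, so the proof stands as written.
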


Let $\Omega$ be a domain in $\hh$ and denote by $G_u$ the graph of a function $u$ over $\Omega.$ The upward unit normal vector to $G_u$ is given by
\begin{equation}
N=\frac{-y\nabla u +\frac{1}{y}\partial_t}{\sqrt{1+y^2\|\nabla u\|^2}},
\label{eq-normal}
\end{equation}
where $\nabla$ is the hyperbolic gradient operator and $\| . \|$ is the hyperbolic norm.

The graph $G_u$ of $u$ 
has constant mean curvature $H$ with respect to the normal pointing up if $u$ satisfies the equation
\begin{equation}
\mbox{div}\left(\frac{y^2\nabla u}{W}\right)=2yH,
\label{eq-cmc}
\end{equation}
where $W=\sqrt{1+y^2\|\nabla u\|^2},$ and $\mbox{div}$ denotes the hyperbolic divergence operator. If $u$ satisfies (\ref{eq-cmc}), $u$ is called a solution in $\Omega.$

\begin{definition}
Let $\Omega$ be a domain in $\hh$ and $h$ be a $C^2-$function over $\Omega.$ 
\begin{enumerate}
\item The function $h$ is a subsolution in $\Omega$ of (\ref{eq-cmc}) if
$$
\mbox{div}\left(\frac{y^2\nabla h}{W}\right)\geq2yH.
$$
\item The function $h$ is a supersolution in $\Omega$ of (\ref{eq-cmc}) if
$$
\mbox{div}\left(\frac{y^2\nabla h}{W}\right)\leq2yH.
$$
\end{enumerate}
\end{definition}

Hence the classical (bounded) Dirichlet problem in a domain $\Omega$ for the constant mean curvature equation is given by\begin{equation}\left\lbrace\begin{array}{ccl}
\mbox{div}\left(\displaystyle{\frac{y^2\nabla u}{W}}\right)&=&2yH \ \ \text{in }\ \ \Omega,\\
u&=&\varphi \ \  \text{ on }\ \  \partial \Omega.
\end{array}\right.
\label{eq-diriproblem}
\end{equation}

\begin{remark}
Throughout this paper we consider a bounded domain $\Omega\subset \hh$ that is away from the asymptotic boundary of $\hh,$ i.e., $\Omega$ is contained in some halfspace $\{(x,y)\in\hh; 0<y_0\leq y\}.$  Hence, if we need to use curves that satisfy the condition $\kappa_{euc}\geq 2H/y,$ we can take the ones that satisfy the clearer condition $\kappa_{euc}\geq 2H/y_0,$ and if we need curves with $\kappa_{euc}\leq -2H/y,$ we can take the ones that satisfy $\kappa_{euc}\leq -2H/y_0.$
\label{rem-curves}
\end{remark}

\section{Maximum principle and interior gradient estimate}\label{sec-gradestimate}

We start this section by stating a general maximum principle for sub and super solutions of the mean curvature equation for boundary data with a finite number of discontinuities (whose proof is analogous to the proof of Theorem 2.2 in \cite{HRS}).

%\begin{lemma}
%Let $v_1,v_2$ be two vectors in a finite dimensional Euclidean space. Then
%\begin{equation}
%\left\langle v_1-v_2,\frac{v_1}{W_1}-\frac{v_2}{W_2}\right\rangle=\frac{W_1+W_2}{2}\left({\left\lVert \frac{v_1}{W_1}-\frac{v_2}{W_2}\right\rVert}^2 + \left(\frac{1}{W_1}-\frac{1}%{W_2}\right)^2\right),
%\end{equation}
%where $W_i=\sqrt{1+\lVert{v_i}\rVert^2}$. In particular,
%\begin{equation}
%\left\langle v_1-v_2,\frac{v_1}{W_1}-\frac{v_2}{W_2}\right\rangle\geq {\left\lVert \frac{v_1}{W_1}-\frac{v_2}{W_2}\right\rVert}^2\geq 0.
%\end{equation}
%\label{lem-vector}
%\end{lemma}

\begin{lemma}[General Maximum Principle] 
Let $u_1$ be a subsolution and $u_2$ be a supersolution of (\ref{eq-cmc}) in a bounded domain $\Omega\subset \hh.$  Suppose that $\lim \rm{inf} \ (u_2-u_1)\geq0$ for any approach to $\partial \Omega$ with the possible exception of a finite number of points of $\partial \Omega$. Then $u_2\geq u_1$ in $\Omega$ with strict inequality unless $u_2\equiv u_1.$
\label{genmaxprin}
\end{lemma}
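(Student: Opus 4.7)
The plan is to prove the comparison by contradiction, with the monotonicity supplied by Lemma \ref{lem-vector}. Setting $w:=u_1-u_2$ and subtracting the sub- and super-solution inequalities,
\[
\dive\bigl(y^2\,X\bigr)\ge 0,\qquad X:=\frac{\nabla u_1}{W_1}-\frac{\nabla u_2}{W_2}.
\]
Assume, toward a contradiction, that $w>0$ somewhere. For $\varepsilon>0$ sufficiently small, the super-level set $\Omega_\varepsilon:=\{w>\varepsilon\}$ is a non-empty open subset of $\Omega$ whose closure meets $\partial\Omega$ only at the finitely many exceptional points $p_1,\ldots,p_m$, by the hypothesis $\liminf(u_2-u_1)\ge 0$ at every other approach to $\partial\Omega$.

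I would then test the differential inequality against $\eta:=\min\bigl((w-\varepsilon)^+,\,k\bigr)$, a bounded non-negative Lipschitz function with $\mathrm{supp}(\eta)\subset\overline{\Omega_\varepsilon}$, integrating over $\Omega\setminus\bigcup_j B_\delta(p_j)$ against the hyperbolic area element. Integration by parts gives
\[
0\;\le\;-\int\langle\nabla\eta,\,y^2X\rangle\,dA\;+\;(\text{boundary terms}).
\]
The boundary terms split as follows: the internal level set $\{w=\varepsilon\}$, where $\eta=0$; the part of $\partial\Omega$ outside a neighborhood of the exceptional points, where $\eta\to 0$ by the boundary hypothesis; and each circle $\partial B_\delta(p_j)$, whose contribution is at most $Ck\delta\to 0$ thanks to the universal bound $\|y^2X\|_{\hh}\le 2y$ (each $\nabla u_i/W_i$ having hyperbolic norm strictly less than $1/y$) combined with $0\le\eta\le k$ and the fact that $y$ is bounded on $\Omega$ (Remark \ref{rem-curves}).

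Passing to the limit $\delta\to 0$ leaves
\[
\int_{\{\varepsilon<w<\varepsilon+k\}}\langle\nabla w,\,y^2X\rangle\,dA\;\le\; 0.
\]
Lemma \ref{lem-vector} makes the integrand pointwise non-negative, and the equality case forces $\nabla u_1/W_1=\nabla u_2/W_2$, whence $\nabla u_1=\nabla u_2$, so $\nabla w\equiv 0$ on $\{\varepsilon<w<\varepsilon+k\}$. Local constancy of $w$ on this open set is incompatible with $w=\varepsilon$ on its internal boundary and $w>\varepsilon$ in the interior, yielding the desired contradiction and hence $u_2\ge u_1$.

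For the strict inequality, if $u_1(p_0)=u_2(p_0)$ at an interior point $p_0$, I would rewrite $X=A\nabla w$ with a uniformly positive-definite coefficient matrix $A=A[u_1,u_2]$ obtained by integrating $\tfrac{d}{dt}\bigl(\nabla u_t/W_t\bigr)$ along $u_t:=tu_1+(1-t)u_2$. This converts $\dive(y^2 X)\ge 0$ into a linear, uniformly elliptic equation without zeroth-order term for $w\le 0$, whose interior maximum $0$ is attained at $p_0$; the strong maximum principle then forces $w\equiv 0$. The main obstacle throughout is the flux contribution around the exceptional boundary points where $w$ may blow up, and this is precisely what the truncation parameter $k$ is designed to handle: it trades the possibly unbounded $w$ for a bounded test function, so the universal bound on $\|y^2X\|$ is enough to kill those contributions in the limit $\delta\to 0$.
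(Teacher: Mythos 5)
Your proof is correct and follows essentially the same route as the paper, which does not write out an argument but defers to Theorem 2.2 of \cite{HRS}: that proof is precisely this divergence-structure computation with a truncated test function, excision of small disks around the exceptional boundary points controlled by the uniform bound on the flux vector, and the algebraic monotonicity of Lemma \ref{lem-vector}, followed by the strong maximum principle for the linearized equation to get strictness. The only cosmetic point is that a connected component of $\{\varepsilon<w<\varepsilon+k\}$ on which $w$ is constant need not have internal boundary in $\{w=\varepsilon\}$; one should note that otherwise it is open and closed in the connected set $\Omega$, hence all of $\Omega$, contradicting the boundary hypothesis at the non-exceptional points.
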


The next result gives an upper bound to the norm of the gradient of a solution $u$ to \eqref{eq-cmc} at a point $p$ depending on its value $u(p)$ and on the distance from $p$ to the boundary of the domain. It has two important consequences for this work: The Harnack inequality (Theorem \ref{theo-harnack}) and the Compactness Theorem (Theorem \ref{theo-compact}). It can be found in \cite{DLR}, Theorem 1. Although the statement there is weaker, their proof yields the next result as stated.

\begin{theorem}[Interior gradient estimate, \cite{DLR}]
Let $u$ be a non negative solution to \eqref{eq-cmc} on $B_R(p)\subset\hh.$ There is a constant $C=C(p,R)$ such that
$$\|\nabla u(p)\|\leq f\left(\frac{u(p)}{R}\right) \text{ for }f(t)=e^{C(t^2+1)}.$$
\label{theo-est-grad}
\end{theorem}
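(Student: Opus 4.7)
My plan is to prove the estimate by working intrinsically on the graph $\Sigma := G_u$, endowed with the metric induced from Sol$_3$, and applying the maximum principle to a carefully constructed auxiliary function built from the height function, a cutoff supported in $B_R(p)$, and the angle function that measures how close to vertical the graph is.

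First I would introduce the \emph{angle function} $\Theta := \langle N,\partial_t\rangle$. From \eqref{eq-normal} together with $\|\partial_t\|^2=y^2$, a direct computation gives $\Theta=y/W$, so that $1/\Theta = W/y \ge \|\nabla u\|$. Hence, combined with the lower bound on $y$ available on $B_R(p)$, a pointwise upper bound on $1/\Theta(p)$ translates into the desired gradient estimate. The Killing character of $\partial_t$ is the key structural ingredient: differentiating $\Theta$ twice along a tangent frame on $\Sigma$ and invoking the Weingarten equation together with $\nabla X+(\nabla X)^t=0$ for $X=\partial_t$ yields a Jacobi-type identity
\[
\Delta_\Sigma\Theta=-\bigl(|A|^2+\Ric(N,N)\bigr)\Theta+R_\Theta,
\]
where $|A|^2$ is the squared norm of the second fundamental form of $\Sigma$ and $R_\Theta$ collects warping corrections coming from $\nabla y$. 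Analogously, since $u$ solves \eqref{eq-cmc}, the $t$-coordinate restricted to $\Sigma$, call it $h$, obeys $\Delta_\Sigma h=2H\,\Theta+R_h$ with $R_h$ again consisting of explicit warping terms that are bounded on $B_R(p)$ because $y$ is bounded from above and below there.

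With these two identities in hand, I would apply the maximum principle to an auxiliary function
\[
\Phi:=\eta^{2}\,(1/\Theta)\,e^{a(h-h(p))^{2}},
\]
where $\eta$ is a Lipschitz cutoff supported in $B_R(p)$ with $\eta(p)=1$ and $a=a(p,R)$ is a large constant to be chosen. Computing $\Delta_\Sigma\log\Phi$ at an interior maximum $q_0\in\Sigma$ of $\Phi$ produces a differential inequality of the schematic form
\[
0\ge c_0\,|A|^{2}(q_0)+2a\,\Theta(q_0)\,(h(q_0)-h(p))\,H-C_1\,(1/\Theta)(q_0)-C_2,
\]
with constants depending only on the geometric data on $B_R(p)$. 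Absorbing the bad linear term in $1/\Theta$ by choosing $a$ large, and controlling the mixed term via Cauchy--Schwarz, forces $(1/\Theta)(q_0)$ to be bounded by an expression that is at worst exponential in $(u(p)/R)^{2}$. Unwinding the cutoff at $p$ yields the stated bound $\|\nabla u(p)\|\le e^{C((u(p)/R)^{2}+1)}$.

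The main obstacle is deriving and uniformly controlling the warping corrections $R_\Theta$ and $R_h$: Sol$_3$ has sectional curvatures of mixed sign and $\partial_t$ is not unit length (its norm equals the warping factor $y$), so the vertical-Killing-field computations from the product case $M\times\rr$ must be redone with care, and it is exactly the dependence of the resulting bounds on $\inf_{B_R(p)}y$ and $\sup_{B_R(p)}y$ that produces the geometric constant $C=C(p,R)$.
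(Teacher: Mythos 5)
A preliminary remark: the paper does not prove Theorem \ref{theo-est-grad}. It is quoted from \cite{DLR} (Theorem 1 there), with the observation that the proof in that reference, although stated for a weaker conclusion, yields the version needed here. So the benchmark is the argument of \cite{DLR}, which is a Korevaar-type maximum-principle argument on the graph, and your outline follows exactly that strategy: the angle function $\Theta=\langle N,\partial_t\rangle=y/W$, a Jacobi-type equation for $\Theta$ coming from the Killing character of $\partial_t$, an identity for the height, and a cutoff test function analyzed at an interior maximum. Your preliminary identities are right in substance; in fact your hedge is unnecessary, since for a genuine Killing field and constant $H$ one has exactly $\Delta_\Sigma\Theta=-(|A|^2+\Ric(N,N))\Theta$ with no remainder, $\Ric_{\rm Sol_3}\geq -2$, and the warping really only enters in passing from $1/\Theta=W/y$ to $\|\nabla u\|$, at the cost of a factor controlled by $\inf_{B_R(p)}y$.

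The gap is in the step where the argument is supposed to close. First, the inequality you write at the maximum $q_0$ of $\Phi$ is asserted ``schematically'' rather than derived, and as stated it cannot force a bound on $(1/\Theta)(q_0)$: none of its terms grows superlinearly in $1/\Theta$, so nothing absorbs $C_1(1/\Theta)(q_0)$. Indeed $|A|^2$ does not control $1/\Theta$ pointwise, the term $2a\,\Theta(q_0)(h(q_0)-h(p))H$ is bounded independently of the gradient (since $\Theta\leq y$ and $h=u$ on the graph), and $\|\nabla_\Sigma h\|^2\leq 1/y^2$ is likewise bounded, so ``choose $a$ large'' does not manufacture a dominating positive term; in the actual argument the needed positivity comes from using the first-order condition $\nabla_\Sigma\Phi(q_0)=0$ to trade $\nabla_\Sigma\Theta$ for $\nabla_\Sigma\eta$ and $\nabla_\Sigma h$, together with a cutoff whose Hessian contributes a term quadratic in $W$ with a good sign. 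Second, your test function cannot produce the stated form of the estimate: $\eta$ depends only on position and the factor $e^{a(h-h(p))^2}$ grows where $u$ is large, so unwinding at $p$ yields a bound in terms of $\sup_{B_R(p)}u$, not $u(p)$. To get dependence on $u(p)/R$ alone one must cut off in the height direction as well (Korevaar's $\phi=\bigl(1-|x-p|^2/R^2-(u-u(p))/K\bigr)^+$ with $K$ comparable to $u(p)+R$), and this is exactly where the hypothesis $u\geq 0$ is used --- a hypothesis your sketch never invokes. Carrying out this bookkeeping in the metric \eqref{eq-met} is the entire content of the proof in \cite{DLR} and is what produces $C(p,R)$ and the profile $f(t)=e^{C(t^2+1)}$.
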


As a consequence of the interior gradient estimate, we have the Harnack inequality. The proof follows the same steps as in $\mathbb{R}^3,$ which was presented by Serrin in \cite{Serrin}, Theorem 5.

\begin{theorem}[The Harnack Inequality]\label{theo-harnack}
Let $u$ be a non negative solution to \eqref{eq-cmc} in $B_R(p).$ Then there is a function
$\Phi(t,r)$ such that $$u(q)\leq \Phi(m,r) \text{ and } \Phi(t,0)=t,$$ where $m=u(p)$ and $r$ is the distance from $q$ to $p.$ 

For each $t$ fixed, $\Phi(t,r)$ is a continuous strictly increasing function defined on an interval $[0,\rho(t)),$ for $\rho$ a continuous strictly decreasing function tending to zero as $t$ tends to infinity and $\lim_{r\rightarrow\rho(t)}\Phi(t,r)=+\infty.$ 
\end{theorem}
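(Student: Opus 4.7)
The plan is to follow Serrin's proof of the Harnack inequality in $\mathbb{R}^3$ (\cite{Serrin}, Theorem 5), using the interior gradient estimate (Theorem \ref{theo-est-grad}) as the replacement for its Euclidean analogue, and integrating a differential inequality along a minimizing geodesic in $\hh$ joining $p$ to $q$.

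First I would localize the estimate. For any $x\in B_R(p)$, the ball $B_{R-d(p,x)}(x)$ is contained in $B_R(p)$, so applying Theorem \ref{theo-est-grad} on this smaller ball yields
\[
\|\nabla u(x)\|\leq \exp\!\left(C(x,R-d(p,x))\left(\frac{u(x)^2}{(R-d(p,x))^2}+1\right)\right).
\]
Using continuity of $C$ in its arguments (which range over a compact set as $x$ varies over a sub-ball of $B_R(p)$), this furnishes a single function $g=g(t,s)$, depending only on $p$ and $R$, with $\|\nabla u(x)\|\leq g(u(x),d(p,x))$ on $B_R(p)$.

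Next, let $\gamma:[0,r]\to \hh$ be the minimizing unit-speed geodesic from $p$ to $q$, and set $\phi(s):=u(\gamma(s))$. Then $|\phi'(s)|\leq \|\nabla u(\gamma(s))\|\leq g(\phi(s),s)$, so by ODE comparison $\phi(s)\leq \Phi(m,s)$, where $\Phi(m,\cdot)$ is the maximal solution of $\Phi'=g(\Phi,s)$ with $\Phi(0)=m$. In particular, $\Phi(m,0)=m$, and taking $s=r$ gives the required $u(q)\leq \Phi(m,r)$. Because $g$ grows exponentially in its first argument, $\Phi(m,\cdot)$ blows up at a finite time $\rho(m)\leq R$, and the continuous dependence of solutions of ODEs on data yields the stated qualitative properties: $\Phi(m,\cdot)$ is strictly increasing and continuous on $[0,\rho(m))$ with limit $+\infty$ at $\rho(m)$; $\rho$ is strictly decreasing and continuous in $m$; and $\rho(m)\to 0$ as $m\to\infty$ since a larger initial value already produces an exponentially larger slope at $s=0$, forcing earlier blow-up.

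The main obstacle is technical: extracting from Theorem \ref{theo-est-grad}, which is stated at the center of a ball, a version valid at an arbitrary interior point with a constant that is uniformly controlled in the base point. This reduces to verifying that $C(x,r)$ depends continuously on $(x,r)$ over the relevant compact set; once this is in hand, the remainder is the standard ODE comparison argument from Serrin's original proof.
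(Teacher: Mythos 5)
Your proposal is correct and follows essentially the same route as the paper, which simply invokes Serrin's proof of Theorem 5 in \cite{Serrin}: localize the interior gradient estimate of Theorem \ref{theo-est-grad} to balls centered at interior points, then integrate the resulting differential inequality along a minimizing geodesic from $p$ to $q$ and conclude by ODE comparison. The technical point you flag (uniform control of the constant $C(x,r)$ as the base point varies) is the right one to address, and your compactness/continuity argument handles it in the way the paper implicitly assumes.
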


\section{Existence results}

The main theorem of this section is about the existence of solutions of \eqref{eq-diriproblem} in bounded piecewise $C^1$ domains for bounded boundary data that are continuous except in a finite subset of the boundary. This result is essential in the proof of our main results.

Given a piecewise $C^1$ domain $\Omega$, the outer curvature $\hat{\kappa}(P)$ of a point $P\in \partial \Omega$ is defined as the supremum of the curvatures of $C^2$ curves through $P$ that do not intercept $\Omega$ with normal vectors pointing to $\Omega.$ If there is not such a curve, $\hat{\kappa}(P)$ is $-\infty.$

\begin{theorem}[Existence Theorem]\label{theo_existence}
Let $\Omega\subset \mathbb{H}^2$ be a piecewise $C^1$ domain. Suppose that the outer Euclidean curvature of $\partial \Omega$ satisfies $\hat{\kappa}_{euc}(x,y)\geq 2H/y$ with possible exception in a finite set $E.$
If the equation \eqref{eq-cmc} admits a bounded subsolution in $\Omega,$
then the Dirichlet problem \eqref{eq-diriproblem} for constant mean curvature $H$ is solvable for any bounded $\varphi\in C^0(\partial D\backslash E).$ Besides, from Lemma \ref{genmaxprin}, the solution is unique.
\end{theorem}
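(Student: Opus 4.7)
The plan is to adapt the Perron approach to quasilinear elliptic Dirichlet problems, using the compactness provided by Theorem \ref{theo-est-grad} and Theorem \ref{theo-harnack} together with the local barriers that the curvature condition $\hat{\kappa}_{euc}\ge 2H/y$ makes available at every point of $\partial\Omega\setminus E$.

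First I would treat the smooth case: $\Omega$ is a $C^{2,\alpha}$ domain with $\hat{\kappa}_{euc}>2H/y$ on all of $\partial\Omega$ and $\varphi$ is smooth. Here the continuity method (deforming the mean curvature from $0$ to $H$, or the boundary data along a smooth path) together with Schauder theory and the general maximum principle (Lemma \ref{genmaxprin}) yields a classical solution: the a priori $C^0$ estimate comes from the hypothesized bounded subsolution below and from the fact that constants are supersolutions of \eqref{eq-cmc} when $H>0$; the boundary $C^1$ estimate is obtained from local upper and lower barrier graphs built over outer-tangent curves of Euclidean curvature $2H/y$, which by Proposition \ref{prop_H} generate vertical pieces of mean curvature $\ge H$; the interior $C^1$ estimate is Theorem \ref{theo-est-grad}; and the higher-order estimates follow from standard elliptic regularity applied to \eqref{eq-cmc} in the warped-product coordinates.

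Second, I would approximate the given piecewise $C^1$ domain $\Omega$ from the inside by smooth domains $\Omega_n\subset\Omega$ whose boundaries satisfy $\hat{\kappa}_{euc}>2H/y$ strictly and converge to $\partial\Omega$ uniformly away from the finite exceptional set $E$, and approximate $\varphi$ by smooth bounded $\varphi_n\in C^\infty(\partial\Omega_n)$ that are uniformly bounded and converge to $\varphi$ uniformly on compact subsets of $\partial\Omega\setminus E$. Step one provides solutions $u_n$ of \eqref{eq-cmc} on $\Omega_n$ with boundary data $\varphi_n$; the maximum principle applied to $u_n$, the subsolution, and the constant $\sup|\varphi_n|$ gives a uniform $C^0$ bound; Theorem \ref{theo-est-grad} and elliptic regularity then upgrade to $C^{2,\alpha}_{\rm loc}$ bounds; and a diagonal extraction yields a subsequence converging locally smoothly to a solution $u\in C^2(\Omega)$ of \eqref{eq-cmc}.

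Third, I would verify that $u$ attains $\varphi$ continuously at every $P\in\partial\Omega\setminus E$. At such a point the definition of $\hat{\kappa}_{euc}(P)$ supplies a $C^2$ outer curve $\gamma$ with curvature $\ge 2H/y$ tangent to $\partial\Omega$ at $P$; translating $\gamma\times\mathbb{R}$ vertically (which is an isometry in the warped model) and capping it off produces local upper and lower barrier graphs that equal $\varphi(P)\pm\varepsilon$ on the relevant piece of $\partial\Omega_n$ for all large $n$. These barriers squeeze $u_n$ near $P$ uniformly in $n$, and the squeeze survives the limit, giving $u\to\varphi(P)$ as $x\to P$.

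The main obstacle is step three: producing genuine local barrier \emph{graphs} in Sol$_3$ from the outer-curvature hypothesis. Unlike in $\hr$, vertical cylinders are not graphs over their bases without a preliminary tilt or modification, and Sol$_3$ has no rotation to exploit; one must use the warped-product structure and the Killing field $\partial_t$ to deform the cylinder over $\gamma$ into a local CMC graph of mean curvature $\ge H$, and the finiteness of $E$ is exactly what lets the construction ignore the corners where $\partial\Omega$ is concave (where $\hat{\kappa}_{euc}=-\infty$ forces the point into $E$).
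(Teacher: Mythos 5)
Your overall architecture (get an interior solution by a compactness argument, then attach boundary values with local barriers at points of $\partial\Omega\setminus E$) is the right one, and your exhaustion-by-smooth-subdomains scheme is a workable substitute for the paper's Perron argument. But the proposal has a genuine gap exactly at the step you yourself flag as ``the main obstacle'': you never actually produce the local upper and lower barriers. Everything in step three rests on turning the outer-tangent curve $\gamma$ of curvature $\ge 2H/y$ into barrier \emph{graphs} pinching $u$ to $\varphi(P)$, and saying one should ``deform the cylinder over $\gamma$ into a local CMC graph'' is not a construction. Worse, the distance-function barriers one can build by hand in this setting (the paper's Lemma \ref{lem-BarriersInfiniteDerivative}, $w=-r^a$ in the distance $r$ to a curve) require \emph{strict} inequality $\kappa_{euc}<2H/y$ on the reference curve and produce sub/supersolutions with infinite normal derivative --- they bound $u$ in a neighborhood but do not force $u\to\varphi(P)$; they are the wrong tool for boundary continuity when $\hat\kappa_{euc}$ is only $\ge 2H/y$. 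The same unproved barrier is also what your step one (boundary gradient estimate for the continuity method) and step two (why do the approximating $\Omega_n$ with $\hat\kappa_{euc}>2H/y$ exist, and why are the problems on them solvable?) silently rely on, so the gap propagates through the whole argument.

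The paper closes this gap without constructing any explicit barrier: it invokes the smooth existence theorem imported from Dajczer--Hinojosa--Lira (Theorem \ref{theo_existence_smooth}), whose hypothesis on $H$ is automatically satisfied on sufficiently small domains, and applies it to a smoothing $U_\varepsilon$ of $\Omega\cap B_\varepsilon(q)$ for each $q\in\partial\Omega\setminus E$. The resulting solutions $w^\pm$ on $U_\varepsilon$, with boundary data $\varphi^\pm$ chosen to sandwich $\varphi$ near $q$ and to equal $M=\sup\varphi$ (resp.\ $m$) on $\partial U_\varepsilon\cap\Omega$, \emph{are} the barriers: they are genuine solutions, so the maximum principle gives $w^-\le u\le w^+$ and continuity at $q$. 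If you adopt this device (and, for the interior existence, either the paper's Perron set $S_\varphi$ seeded by the bounded subsolution and capped by the constant supersolution $M$, or your exhaustion with the $\Omega_n$ taken small enough or covered by Corollary \ref{cor-exist-particular}), your outline becomes a complete proof; as written, the decisive analytic ingredient is asserted rather than supplied.
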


In order to prove it, we need some preliminary results. The first is the Compactness Theorem, which follows from the gradient estimate for solutions (Theorem \ref{theo-est-grad}) and the Schauder theory for PDEs and Arzel\'a-Ascoli Theorem.

\begin{theorem}[Compactness Theorem]\label{theo-compact}
Let $\left(u_n\right)$ be a sequence of solutions of \eqref{eq-cmc} uniformly bounded in a bounded domain $\Omega.$ Then, up to a subsequence, $\left\{u_n\right\}$ converges to a solution $u$ on compact subsets of $\Omega.$ 
\end{theorem}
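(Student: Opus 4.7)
The plan is to reduce the statement to a standard application of the Arzel\'a--Ascoli theorem, using the interior gradient estimate (Theorem \ref{theo-est-grad}) to upgrade uniform boundedness to local equicontinuity, and then to invoke Schauder-type interior estimates for quasilinear elliptic equations to promote a $C^0$ limit to a $C^2$ solution.

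First I would observe that equation \eqref{eq-cmc} depends on $u$ only through its derivatives, so if $u_n$ is a solution then so is $u_n+c$ for any constant $c$. Since $\{u_n\}$ is uniformly bounded, we may fix $M>0$ with $|u_n|\le M$ for all $n$ and replace $u_n$ by $v_n:=u_n+M\ge 0$; it suffices to prove the theorem for the non\-negative sequence $(v_n)$. Next, fix a compact set $K\subset \Omega$ and let $2R:=\mathrm{dist}(K,\partial \Omega)>0$ so that $B_R(p)\subset \Omega$ for every $p\in K$. Applying Theorem \ref{theo-est-grad} on $B_R(p)$ with $p\in K$ yields
\begin{equation*}
\|\nabla v_n(p)\|\le f\!\left(\frac{v_n(p)}{R}\right)\le f\!\left(\frac{2M}{R}\right),
\end{equation*}
where the constant $C=C(p,R)$ in $f$ can be chosen uniform in $p\in K$ by a standard covering argument, since $K$ is compact and the distance to $\partial \Omega$ is bounded below. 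Hence $\|\nabla v_n\|$ is uniformly bounded on $K$; together with $0\le v_n\le 2M$ this gives a uniform $C^{0,1}$ bound on $K$ in either the hyperbolic or Euclidean metric (these are locally equivalent away from the asymptotic boundary, and $K$ is bounded away from $\{y=0\}$).

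At this point Arzel\'a--Ascoli provides a subsequence of $(v_n)$ that converges uniformly on $K$ to some continuous $v$. To upgrade this to $C^2$ convergence and to guarantee that $v$ itself satisfies \eqref{eq-cmc}, I would rewrite the CMC equation in non\-divergence form as a quasilinear elliptic equation $a^{ij}(x,y,\nabla v_n)\,\partial_{ij}v_n=b(x,y,\nabla v_n)$ with smooth coefficients, whose ellipticity constants depend only on $\|\nabla v_n\|_{L^\infty}$. Since the gradient is already uniformly bounded on any slightly larger compact set $K'\Supset K$, the equation is uniformly elliptic on $K'$ with smooth coefficients, and Schauder interior estimates (see e.g.\ Gilbarg--Trudinger, Ch.~13) give a uniform $C^{2,\alpha}(K)$ bound on $(v_n)$. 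A second application of Arzel\'a--Ascoli extracts a further subsequence converging in $C^2(K)$ to $v$, and passing to the limit in \eqref{eq-cmc} shows that $v$ is a solution on $K$.

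To finish, I would apply this argument on an exhaustion $K_1\Subset K_2\Subset\cdots\Subset \Omega$ and take a Cantor diagonal subsequence, producing a subsequence of $(u_n)$ (after subtracting back the constant $M$) that converges in $C^2$ on every compact subset of $\Omega$ to a solution $u$ of \eqref{eq-cmc}. The main technical point is really the uniformity of the constant $C(p,R)$ in Theorem \ref{theo-est-grad} across $p\in K$; once that is in hand, the rest is routine elliptic regularity combined with Arzel\'a--Ascoli and a diagonal extraction.
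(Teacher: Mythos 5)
Your proposal is correct and follows exactly the route the paper indicates: the paper gives no detailed proof, stating only that the Compactness Theorem ``follows from the gradient estimate for solutions (Theorem \ref{theo-est-grad}) and the Schauder theory for PDEs and Arzel\'a--Ascoli Theorem,'' which is precisely your combination of the normalization to a nonnegative sequence, the interior gradient bound on compact sets, Schauder interior estimates, and a diagonal extraction. Nothing further is needed.
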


The other preliminary result is an application of Theorem 2 of \cite{DHL} to Sol$_3,$ which implies existence of constant mean curvature graphs taking $C^{2,\alpha}$ boundary data in $C^{2,\alpha}$ domains.

%\begin{theorem}[\cite{DHL}]\label{theo_existence_smooth}
%Let $\Omega\subset\mathbb{H}^2$  be a domain with boundary $\gamma$ of class $C^{2,\alpha}$ contained in an Euclidean disk $B_{R}(x_0,y_0+R)$ of radius $R$ centered at $(x_0,y_0+R),$ where $(x_0,y_0)\in \hh$. If the Euclidean curvature of $\gamma$ satisfies $\kappa_{euc}(x,y)\geq 2H/y$ and \begin{equation}\label{eq-desigteoexist}
%H\leq \sqrt{2}\frac{(1+2R/y_0)^{\sqrt{2}}+1}{(1+2R/y_0)^{\sqrt{2}}-1},\end{equation}
%then for any $\varphi\in C^{2,\alpha}(\gamma),$ there is a unique solution  $u\in C^{2,\alpha}(\overline{\Omega})$ of \eqref{eq-cmc}.
%\end{theorem}

\begin{theorem}[\cite{DHL}]\label{theo_existence_smooth}
Let $\Omega\subset\mathbb{H}^2$  be a domain with boundary $\gamma$ of class $C^{2,\alpha}$ contained in an Euclidean disk $B_{R}(x_0,y_0+R)$ of radius $R$ centered at $(x_0,y_0+R),$ where $(x_0,y_0)\in \hh$. If the Euclidean curvature of $\gamma$ satisfies $\kappa_{euc}(x,y)\geq 2H/y$ and \begin{equation}\label{eq-desigteoexist}
\text{either }H\leq \sqrt{2}\text{ or }
R\leq \frac{y_0}{2}\left[\left(\frac{H+\sqrt{2}}{H-\sqrt{2}}\right)^{1/\sqrt{2}}-1 \right],\end{equation}
then for any $\varphi\in C^{2,\alpha}(\gamma),$ there is a unique solution  $u\in C^{2,\alpha}(\overline{\Omega})$ of \eqref{eq-cmc}.
\end{theorem}

To see that the above result is a consequence of Theorem 2 of \cite{DHL}, notice that in Sol$_3,$ $\Ric_{\rm Sol_3}\geq -2,$ the mean curvature of the cylinder $\gamma \times \mathbb{R}$ is $y\kappa_{euc}/2$ (see Proposition \ref{prop_H}) and that $\Omega$ is contained in an Euclidean disk $B_{R}(x_0,y_0+R)$ if and only if it is contained in the hyperbolic disk of center $(x_0,\sqrt{y_0^2+2Ry_0})$ and radius $ \ln(1+2R/y_0)/2.$ Therefore, Theorem 2 of \cite{DHL} applies if $\ln(1+2R/y_0)/2\leq \coth^{-1}(H/\sqrt{2})/\sqrt{2},$ which is equivalent to \eqref{eq-desigteoexist}.

In \cite{DLR} an interior gradient estimate that implies Theorem \ref{theo_existence_smooth} for only continuous boundary data is obtained. The proof of how the interior gradient estimate implies the generalization to $C^0$ boundary data is made in Section 3 of \cite{DLR}: The idea is to take sequences of smooth boundary data that approximate the continuous data $\varphi$ from above and from below and use the interior gradient estimates to guarantee the convergence in compact subsets of the domain of sequence of solutions.

%The proof of Theorem \ref{theo_existence} follows the same steps as the proof of Theorem 3.14 of \cite{Minh}, using first the Perron method to obtain a solution and then Theorem \ref{theo_existence_smooth} to build local barriers and prove that the solution extends continuously to $\partial \Omega\backslash E.$ 

\begin{proof}[Proof of Theorem \ref{theo_existence}]
We apply the Perron method as in \cite{Serrin} to obtain a solution. 
Since \eqref{eq-cmc} admits a bounded subsolution in $\Omega,$ by translating it downwards if necessary, the set $$S_\varphi=\{v\in C^2(\Omega)\cap C^0(\overline{\Omega})\,|\,v \text{ is a subsolution to \eqref{eq-cmc} in } \Omega \text{ and } v\le \varphi \text{ on } \partial \Omega \}$$ is non empty. If $M=\sup\varphi,$ then the constant function $v=M$ is above any function in $S_\varphi.$  Let $m=\inf_{\overline{\Omega}} v_0,$ for some $v_0\in S_\varphi.$ Using the Compactness Theorem, the Perron method implies that $u=\sup_{v\in S_\varphi} v$ is a solution to \eqref{eq-cmc} in $\Omega$ with $m\le u \le M.$

It remains to show that $u$ extends continuously to $\partial \Omega\backslash E.$ This is a consequence of the existence of local barriers given by Theorem \ref{theo_existence_smooth}. More precisely, for any $q\in \partial \Omega\backslash E$ and for any $\varepsilon >0$ sufficiently small, consider $U_\varepsilon$ the subset of $\Omega$ obtained  by smoothing the boundary of $\Omega\cap B_\varepsilon(q),$ for $B_\varepsilon(q)$ the Euclidean ball centered at $q.$ We define $w^+$ and $w^-$ the solutions to \eqref{eq-cmc} in $U_\varepsilon$ with boundary data
$$w^+=\varphi^+ \text{ on }\partial U_\varepsilon \text{ and }w^-=\varphi^- \text{ on }\partial U_\varepsilon,$$
for $\varphi^+, \varphi^- \in C^0(\partial U_\varepsilon),$ such that $\varphi^+(q)=\varphi^-(q)=\varphi(q),$ $\varphi^-\leq \varphi\leq\varphi^+$ on $\partial U_\varepsilon\cap \partial \Omega,$ $\varphi^+=M$ on $\partial U_\varepsilon \cap \Omega$ and $\varphi^-=m$ on $\partial U_\varepsilon \cap \Omega.$ Then $w^-$ and $w^+$ are lower and upper barriers,
$w^-\leq u\leq w^+$ in $U_\varepsilon,$ which implies the continuity of $u$ up to the boundary.
\end{proof}

A particular case where a subsolution exists occurs when the domain $\Omega$ is contained in a disk as in Theorem \ref{theo_existence_smooth}. We remark that for any $H\ge 0,$ if a domain is taken sufficiently small, the Dirichlet problem is solvable, as we can see as a consequence of the next corollary.

\begin{corollary}\label{cor-exist-particular}
Let $\Omega\subset \mathbb{H}^2$ be a piecewise $C^1$ domain. Suppose that $\overline{\Omega}$ is contained in an Euclidean disk of radius $R$ centered at $(x_0,y_0+R)$, where $(x_0,y_0)\in \hh$, and that the outer Euclidean curvature of $\partial \Omega$ satisfies $\hat{\kappa}_{euc}(x,y)\geq 2H/y$ with possible exception in a finite set $E.$
If \eqref{eq-desigteoexist} holds,
%\leq \sqrt{2}\frac{(1+2R/y_0)^{\sqrt{2}}+1}{(1+2R/y_0)^{\sqrt{2}}-1},$$
then the Dirichlet problem for constant mean curvature $H$ is solvable for any bounded $\varphi\in C^0(\partial \Omega\backslash E).$
\end{corollary}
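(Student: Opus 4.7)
The plan is to apply the Existence Theorem (Theorem \ref{theo_existence}); the only hypothesis of that theorem not already in hand is the existence of a bounded subsolution of \eqref{eq-cmc} on $\Omega$. I would produce such a subsolution by restricting to $\Omega$ a classical solution given by Theorem \ref{theo_existence_smooth} over a slightly larger smooth domain.

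As a preliminary simplification, observe that $(1+2R/y_0)^{\sqrt 2}>1$, so $\frac{(1+2R/y_0)^{\sqrt 2}+1}{(1+2R/y_0)^{\sqrt 2}-1}>1$; hence the condition $H\le\sqrt 2$ already implies $H\le\sqrt 2\,\frac{(1+2R/y_0)^{\sqrt 2}+1}{(1+2R/y_0)^{\sqrt 2}-1}$. It therefore suffices to work under this second, disk-dependent bound.

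Next I would construct a $C^{2,\alpha}$ auxiliary domain $\Omega_0$ with $\Omega\subseteq\Omega_0\subseteq B_R(x_0,y_0+R)$ and $\kappa_{euc}(\partial\Omega_0)\ge 2H/y$, by smoothing the boundary of $\Omega$ outward at its finitely many non-$C^{2,\alpha}$ points. At each such point $P\in\partial\Omega\setminus E$, the outer-curvature hypothesis $\hat\kappa_{euc}(P)\ge 2H/y(P)$ supplies an exterior $C^2$ arc through $P$ of Euclidean curvature at least $2H/y$; splicing such arcs in rounds the corners while preserving the curvature bound, and at each of the finitely many exceptional points in $E$ one performs an arbitrarily small outward rounding. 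Since all modifications can be localized, $\Omega_0$ still lies inside $B_R(x_0,y_0+R)$. Theorem \ref{theo_existence_smooth} then applies to $\Omega_0$ (with, say, zero boundary data), producing a bounded solution $u_0\in C^{2,\alpha}(\overline{\Omega_0})$ of \eqref{eq-cmc}. The restriction $u_0|_\Omega$ is a bounded $C^2$ solution on $\Omega$, in particular a bounded subsolution, so Theorem \ref{theo_existence} yields the desired solution of the Dirichlet problem for any bounded $\varphi\in C^0(\partial\Omega\setminus E)$.

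The main obstacle is the geometric smoothing step: the outer-curvature hypothesis is exactly what lets one round outward while keeping $\kappa_{euc}\ge 2H/y$ at the spliced arcs away from $E$, but one must check that the modifications near the exceptional points of $E$ fit inside $B_R(x_0,y_0+R)$ and do not spoil the curvature condition on the remainder of $\partial\Omega_0$. Once $\Omega_0$ is built, the argument is a direct two-step application of Theorems \ref{theo_existence_smooth} and \ref{theo_existence}.
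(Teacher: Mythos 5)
Your proposal is correct and follows essentially the same route as the paper: the paper likewise produces the bounded subsolution by taking a larger $C^{2,\alpha}$ domain $\widetilde{\Omega}$ with $\overline{\Omega}\subset\widetilde{\Omega}\subset B_R(x_0,y_0+R)$, solving the smooth Dirichlet problem of Theorem \ref{theo_existence_smooth} there with constant boundary data $\inf\varphi$ (translating downwards if needed), and then invoking Theorem \ref{theo_existence}. The only point where you go beyond the paper is in detailing the outward smoothing of $\partial\Omega$; note, though, that $\partial\Omega$ is only piecewise $C^1$, so it need not be $C^{2,\alpha}$ away from finitely many points, and the paper sidesteps this by simply choosing any convenient enclosing $C^{2,\alpha}$ domain satisfying the curvature hypothesis rather than smoothing $\partial\Omega$ itself.
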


\begin{proof}
Notice that given a bounded function $\varphi\in C^0(\partial \Omega \backslash E),$ 
we can find a solution to \eqref{eq-cmc} defined on $\overline{\Omega}$ that is below $\varphi$ on $\partial \Omega.$ For that, take a larger $C^{2,\alpha}$ domain $\widetilde{\Omega}$ containing $\overline{\Omega},$ such that $\widetilde{\Omega}$ is contained in the disk of radius $R$ and, by Theorem \ref{theo_existence_smooth} applied to the Dirichlet problem on $\widetilde{\Omega}$ with boundary data $u=\inf \varphi,$ there is a solution $w,$ which is (or can be translated downwards to be) below $\varphi$ on $\partial \Omega.$ Hence, by Theorem \ref{theo_existence}, the Dirichlet problem has a solution in $\Omega.$
\end{proof}

\section{Some local barriers}
\begin{lemma}
Let $\Omega$ be a domain and $\gamma$ be a $C^2$ arc of $\partial\Omega.$
\begin{enumerate}
\item[(i)] If $\kappa_{euc}(x,y)< 2H/y,$ then any point in the interior of $\gamma$ admits a neighborhood $U\subset \Omega$ in which there is a supersolution $u^+$ of \eqref{eq-cmc} with exterior normal derivative $\frac{\partial u^+}{\partial \eta}=+\infty$ along $\gamma.$

\item[(ii)] If $\gamma$ has $\kappa_{euc}(x,y)< -2H/y.$ Then any point in the interior of $\gamma$ admits a neighborhood $U\subset \Omega$ in which there is a subsolution $u^-$ of \eqref{eq-cmc} with exterior normal derivative $\frac{\partial u^-}{\partial \eta}=-\infty$ along $\gamma.$
\end{enumerate}

\label{lem-BarriersInfiniteDerivative}
\end{lemma}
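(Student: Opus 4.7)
I would construct the barrier $u^+$ as a profile function of the hyperbolic distance to $\gamma$ inside a tubular neighborhood of $p$, and verify the supersolution inequality by sending $|\psi'|\to\infty$ and invoking the half--plane identity between hyperbolic and Euclidean geodesic curvature to reduce everything to the hypothesis $\kappa_{euc}<2H/y$.

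Fix $p$ in the interior of $\gamma$. Since $\gamma$ is $C^2$ and $\kappa_{euc}(\gamma)<2H/y$, continuity gives an open subarc $V\subset\gamma$ containing $p$ together with constants $\delta,\varepsilon_0>0$ such that, for every $t\in[0,\delta]$, the hyperbolic parallel $\gamma^{\mathbb{H}}_t:=\{q\in\Omega:d_{\mathbb{H}}(q,\gamma)=t\}$ that projects onto $V$ satisfies $\kappa_{euc}(\gamma^{\mathbb{H}}_t)\le 2H/y-\varepsilon_0$. Let $U\subset\Omega$ be the resulting tubular strip, parametrized by hyperbolic Fermi coordinates $(s,d)$ with $s$ hyperbolic arclength along $V$ and $d=d_{\mathbb{H}}(\cdot,\gamma)$ into $\Omega$. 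I take $u^+(s,d):=\psi(d)$ for some $\psi\in C^2([0,\delta])$ chosen bounded, strictly decreasing, with $\psi'(0^+)=-\infty$, and with $\psi''(d)/|\psi'(d)|^3\to 0$ as $d\to 0^+$; e.g.\ $\psi(d)=k-c\,d^{1/4}$. Then $\partial u^+/\partial\eta=-\psi'(0^+)=+\infty$ along $\gamma$, as required.

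Next I compute $\mathfrak{M}u^+:=\operatorname{div}(y^2\nabla u^+/W)-2yH$ directly. With $\nabla u^+=\psi'(d)\,\nu$ for $\nu=\nabla_{\mathbb{H}}d$, $W=\sqrt{1+y^2(\psi')^2}$, $\operatorname{div}_{\mathbb{H}}\nu=\Delta_{\mathbb{H}}d=-\kappa_{\mathbb{H}}(\gamma^{\mathbb{H}}_d)$, and the half--plane identity $\kappa_{\mathbb{H}}=y\,\kappa_{euc}+\hat\nu^{y}$ (where $\hat\nu$ is the Euclidean unit normal aligned with $\nu$), expansion produces cancellations of all $\hat\nu^{y}$ contributions and yields
\[
\mathfrak{M}u^+\;\longrightarrow\;y^2\,\kappa_{euc}(\gamma^{\mathbb{H}}_d)-2yH\;\le\;-\varepsilon_0\,y\qquad\text{as }\psi'(d)\to-\infty,
\]
with the residual term $y^2\psi''/W^3$ also vanishing in the limit by the choice of $\psi$. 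Shrinking $\delta$ so that $|\psi'|$ is uniformly large on $U$ makes $\mathfrak{M}u^+\le 0$ throughout $U$, establishing that $u^+$ is a supersolution. For part (ii), the symmetric construction uses $\psi$ strictly increasing with $\psi'(0^+)=+\infty$; the hypothesis $\kappa_{euc}(\gamma)<-2H/y$ then delivers $\mathfrak{M}u^-\to -y^2\kappa_{euc}(\gamma^{\mathbb{H}}_d)-2yH\ge\varepsilon_0\,y>0$ in the limit, so $u^-$ is a subsolution with $\partial u^-/\partial\eta=-\psi'(0^+)=-\infty$.

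The main obstacle is the bookkeeping of $\mathfrak{M}u^+$ in hyperbolic Fermi coordinates: the warping factor $y$ produces several terms which, individually, do not vanish as $|\psi'|\to\infty$, and it is only after invoking $\kappa_{\mathbb{H}}=y\,\kappa_{euc}+\hat\nu^{y}$ to absorb them that the cylindrical critical curvature $2H/y$ emerges cleanly. Once this algebraic reduction is completed, the remainder of the argument --- choosing $\psi$ with $\psi''=o(|\psi'|^3)$ at $0$ and shrinking $\delta$ --- is routine.
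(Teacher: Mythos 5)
Your proposal is correct and takes essentially the same route as the paper: a radial profile $\psi(d)$ of the hyperbolic distance to $\gamma$ with $\psi'(0^+)=-\infty$ and $\psi''=o(|\psi'|^3)$ (the paper uses $w(r)=-r^a$, $a\in(0,1/2)$; your $d^{1/4}$ is the case $a=1/4$), combined with the identity $-\Delta r=y\,\kappa_{euc}(\gamma_r)+\frac{1}{y}\langle\nabla y,\nabla r\rangle$ so that in the limit $|\psi'|\to\infty$ only $y^2\kappa_{euc}-2yH<0$ survives. The subsolution in (ii) is likewise obtained by the same sign flip as in the paper.
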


\begin{proof}
Let us prove the first item. Consider $w$ a function of the hyperbolic distance $r$ to $\gamma.$ 
By definition, the function $w$ is a supersolution to \eqref{eq-cmc} with normal derivative $\frac{\partial w}{\partial \eta}=+\infty$ along $\gamma$ if and only if
\begin{equation} 
\mbox{div}\left(\frac{y^2\nabla w}{W}\right)=
\frac{1}{W^3}\left[(2yw'+y^3w'^3)\langle \nabla y,\nabla r\rangle+y^2 w''\right]+\frac{y^2 w'\Delta r}{W} \leq 2yH,
\label{eq-supersol}
\end{equation}
where $W=\sqrt{1+y^2w'^2}$ and the gradient and the Laplacian are taken in the hyperbolic metric and also $\lim_{r\rightarrow 0}w'(r)=-\infty.$

Since we only have assumptions on the Euclidean curvature of the boundary, let us relate the hyperbolic Laplacian with the Euclidean one. Denoting by $\gamma_r$ the curve in $\Omega$ parallel to $\gamma$ with hyperbolic distance $r$ apart from $\gamma,$ we have for $r>0$ that 
\begin{equation}\label{eq-relacaok_euc-k_hip}
-\Delta r=\kappa(\gamma_r)=y\kappa_{euc}(\gamma_r)+\frac{1}{y}\langle \nabla y,\nabla r\rangle.
\end{equation}
We define $$\widetilde{W}=\frac{W}{-yw'}=\sqrt{1+\left(y^2w'^2\right)^{-1}}$$
and we can rewrite \eqref{eq-supersol} with the Euclidean curvature

\begin{equation}\label{eq-desig-supersol}
\frac{1}{\widetilde{W}^3}\left[-\frac{w''}{yw'^3}\right]+\frac{y^2\kappa_{euc}(\gamma_r)}{\widetilde{W}}-\frac{\langle\nabla y,\nabla r\rangle}{y^2w'^2\widetilde{W}^3}\leq 2yH.
\end{equation}

Take $w(r)=-r^a,$ for $a\in (0,1/2).$ Hence when $r\to0,$ we get 
$$w'(r)\to-\infty,\, \widetilde{W}^3\to1,\, \frac{\langle\nabla y,\nabla r\rangle}{y^2w'^2\widetilde{W}^3}\to0\text{ and }\frac{w''}{w'^3}\to0 \text{ (since } a\in(0,1/2)).$$ 
Then given $\varepsilon>0,$ the first and third terms of \eqref{eq-desig-supersol} can be assumed to have absolute value less than $\varepsilon/3,$ if $r$ is sufficiently small.
Moreover, for sufficiently small $r$ and restricting to a neigboorhood $U$ where $y$ does not vary much, from relation \eqref{eq-relacaok_euc-k_hip}, the Euclidean curvature of the parallel curves $\gamma_r$ also remains bounded close to $2H/y,$ implying \eqref{eq-desig-supersol} in $U$
%$$
%\frac{1}{\widetilde{W}^3}\left[-\frac{w''}{yw'^3}\right]+\frac{y^2\kappa_{euc}(\gamma_r)}{\widetilde{W}}-\frac{\langle\nabla y,\nabla r\rangle}{y^2w'^2\widetilde{W}^3}\leq 2yH
%$$
and then $u^+=w$ is a supersolution in $U.$

To prove the second item, take $u^-=-w$ so $u^-$ is a subsolution in a neighborhood $U.$
\end{proof}

The following lemma is a variant of the maximum principle for sub- or supersolution with infinite boundary derivative. This result together with the barriers constructed in Lemma \ref{lem-BarriersInfiniteDerivative} allows us to obtain a bound for solutions in neighborhoods of the boundary (Lemma \ref{lem-viz}).

\begin{lemma}\label{lem-pdomaxcominfiniteslope}
Let $\Omega$ be a domain bounded by the union of two closed arcs $\gamma_1$ and $\gamma_2,$ where $\gamma_2$ is of class $C^1.$ Let $u\in C^2(\Omega)\cap C^1(\gamma_2)$ and $v\in C^2(\Omega)\cap C^0(\overline{\Omega})$ be respectively a solution and a subsolution of \eqref{eq-cmc} in $\Omega$ and assume that $\frac{\partial v}{\partial \eta}=-\infty$ along $\gamma_2.$ If $\liminf(u-v)\geq 0$ for any approach to a point of $\gamma_1,$ then $v\leq u$ in $\Omega.$
\end{lemma}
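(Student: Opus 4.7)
The strategy is to argue by contradiction, using an integration-by-parts identity on the super-level set $\{v-u>\delta\}$, in which the infinite outward derivative of $v$ along $\gamma_2$ produces a boundary term with the right sign; this lets us bypass the fact that Lemma \ref{genmaxprin} only handles finitely many boundary singularities.

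Suppose $\sup_\Omega(v-u)>0$ and fix $\delta\in\bigl(0,\sup_\Omega(v-u)\bigr)$ so that $D_\delta:=\{p\in\Omega:v(p)-u(p)>\delta\}$ is a nonempty open set. By the hypothesis $\liminf(u-v)\ge 0$ along $\gamma_1$, every point of $\gamma_1$ has a neighbourhood in $\Omega$ on which $v-u<\delta$, so $\overline{D_\delta}\cap\gamma_1=\emptyset$ and $\partial D_\delta\subset(\{v-u=\delta\}\cap\Omega)\cup(\gamma_2\cap\overline{D_\delta})$.

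Let $X:=y^2\!\left(\dfrac{\nabla v}{W_v}-\dfrac{\nabla u}{W_u}\right)$. Because $v$ is a subsolution and $u$ a solution of \eqref{eq-cmc}, $\operatorname{div}X\ge 0$. The plan is to integrate $(v-u-\delta)\operatorname{div}X$ over $D_\delta$ and apply the divergence theorem; since $\nabla v$ blows up on $\gamma_2$, I would carry this out on $D_\delta\cap\Omega^\epsilon$, where $\Omega^\epsilon$ removes an $\epsilon$-neighbourhood of $\gamma_2$, and let $\epsilon\to 0$. Schematically,
$$0\le \int_{D_\delta}(v-u-\delta)\operatorname{div}X\,dA=\int_{\partial D_\delta}(v-u-\delta)\langle X,\nu\rangle\,ds-\int_{D_\delta}\langle\nabla(v-u),X\rangle\,dA.$$
The integrand vanishes on $\{v-u=\delta\}\cap\Omega$; Lemma \ref{lem-vector} yields $\langle\nabla(v-u),X\rangle\ge 0$. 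On $\gamma_2\cap\overline{D_\delta}$ the outward normal is $\eta$, and approaching from inside $|\nabla v|\to\infty$ with $\nabla v/|\nabla v|\to -\eta$, whence $\tfrac{y^2\nabla v\cdot\eta}{W_v}\to -y$; since $u\in C^1(\gamma_2)$, $\tfrac{y^2\nabla u\cdot\eta}{W_u}>-y$ strictly, so $\langle X,\eta\rangle$ has a strictly negative limit. Together with $v-u-\delta\ge 0$ on that piece, the boundary term is $\le 0$, collapsing the display into a chain of equalities.

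From $\int_{D_\delta}\langle\nabla(v-u),X\rangle\,dA=0$ and the strict part of Lemma \ref{lem-vector} ($v_1/W_1\mapsto v_1$ being injective), we infer $\nabla v=\nabla u$ on $D_\delta$; by continuity $v-u$ is constant on each connected component $C$ of $D_\delta$. If $\partial C\cap\{v-u=\delta\}\cap\Omega\neq\emptyset$, the constant equals $\delta$, contradicting $v-u>\delta$ in $D_\delta$; if instead $\partial C\subset\gamma_2$, the vanishing of the boundary integrand together with $\langle X,\eta\rangle<0$ forces $v-u=\delta$ on $\partial C\cap\gamma_2$, whence the constant is again $\delta$, another contradiction. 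The main obstacle is the rigorous justification of the divergence theorem near $\gamma_2$: the approximation by $\Omega^\epsilon$ and the passage to the limit $\epsilon\to 0$ for the boundary integral require controlling $\tfrac{y^2\nabla v\cdot\eta}{W_v}$ uniformly as $\nabla v$ blows up, using that $y$ is bounded below on $\overline\Omega$ (Remark \ref{rem-curves}) and that $u$ is $C^1$ up to $\gamma_2$.
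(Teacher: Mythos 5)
Your argument is essentially the energy/flux computation that proves the General Maximum Principle itself, augmented by a boundary term on $\gamma_2$ that you kill using the infinite slope. The paper takes a different and much lighter route (it defers to Spruck): rule out an interior minimum of $u-v$ via Lemma \ref{genmaxprin} (else $u-v$ is constant, contradicting the $\gamma_1$ condition), rule out $\gamma_1$ by hypothesis, and at a minimum point $q\in\gamma_2$ (where $u-v$ is continuous because $u\in C^1(\gamma_2)$ and $v\in C^0(\overline\Omega)$) observe that $\partial(u-v)/\partial\eta=+\infty$ produces points on the inner normal through $q$ where $u-v$ is strictly smaller than the minimum — a contradiction. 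That argument only ever evaluates $v-u$ along a single normal segment, so it needs nothing beyond the literal hypotheses.

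Your route has one concrete soft spot, precisely at the step the paper's route avoids. To get $\langle X,\eta\rangle\le 0$ on $\gamma_2$ you assert that $|\nabla v|\to\infty$ with $\nabla v/|\nabla v|\to-\eta$, so that $\langle y\nabla v/W_v,\eta\rangle\to -1$. But the hypothesis only says $\langle\nabla v,\eta\rangle\to-\infty$; if the tangential component of $\nabla v$ were of the same or larger order, $\langle y\nabla v/W_v,\eta\rangle$ could converge to something strictly greater than $-1$, and then nothing forces it below $\langle y\nabla u/W_u,\eta\rangle$, so the boundary term need not be $\le 0$ (and since the integrand is merely bounded, you cannot absorb it). So as a proof of the lemma exactly as stated there is a gap; it closes immediately for the barriers of Lemma \ref{lem-BarriersInfiniteDerivative}, which are functions of the distance $r$ to $\gamma_2$ alone and hence have purely normal gradient, so the lemma as actually used in Lemma \ref{lem-viz} is unaffected. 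Two smaller remarks: you should take $\delta$ a regular value of $v-u$ (Sard) so that $\partial D_\delta\cap\Omega$ is nice enough for the divergence theorem, although multiplying by $(v-u-\delta)^+$ largely sidesteps this; and your final case ``$\partial C\subset\gamma_2$'' is vacuous, since then $C$ would be open and closed in the connected set $\Omega$, forcing $C=\Omega$ and contradicting $\overline{D_\delta}\cap\gamma_1=\emptyset$, so you do not need the strict negativity of $\langle X,\eta\rangle$ there.
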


The proof follows from the General Maximum Principle (Lemma \ref{genmaxprin}) exactly as stated in \cite{S}.

\begin{lemma}\label{lem-localbarr}
Let $u$ be a solution of $(\ref{eq-cmc})$ in a domain $\Omega,$ $\gamma\subset \partial\Omega$ be a $C^2$ arc, and suppose that $m\leq u\leq M$ on $\gamma$ for some constants $m,M.$ Then there exists a constant $c=c(\Omega)$ (only depending on $\Omega$) such that for any compact $C^2$ subarc $ \gamma' \subset \gamma,$

\begin{enumerate}
\item[(i)] if $\kappa_{euc}(\gamma')\geq 2H/y$ with strict inequality except for isolated points, then there is a neighborhood $U$ of $\gamma'$ in $\overline\Omega$ such that $u\geq m-c$ in $U;$

\item[(ii)] if $\kappa_{euc}(\gamma')> -2H/y,$ then there is a neighborhood $U$ of $\gamma'$ in $\overline\Omega$ such that $u\leq M+c$ in $U.$
\end{enumerate}
\label{lem-viz}
\end{lemma}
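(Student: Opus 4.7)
The plan is to build a local barrier near each point of the compact subarc $\gamma'$ and then patch finitely many such barriers together. For case (i), I will construct, at every point $p \in \gamma'$ where the strict inequality $\kappa_{euc}(\gamma') > 2H/y$ holds, a thin ``lens'' $V_p \subset \overline{\Omega}$ bounded by a short piece of $\gamma'$ and an auxiliary $C^2$ arc $\sigma_p \subset \Omega$. Concretely, $\sigma_p$ can be taken as a short Euclidean circular arc tangent to $\gamma'$ at $p$ from the $\Omega$-side, with radius $R$ small enough (depending on $\Omega$, $H$, and the $C^2$ data of $\gamma$) that $1/R > 2H/y_0$ (with $y_0$ from Remark \ref{rem-curves}) and $1/R > \sup_{\gamma} \kappa_{euc}$; such an arc lies inside $\Omega$ near $p$ and, with the normal pointing into $V_p$ (i.e., toward $\gamma'$), has Euclidean curvature $-1/R < -2H/y$.

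Inside the lens I will apply Lemma \ref{lem-BarriersInfiniteDerivative}(ii) to $(V_p, \sigma_p)$ to obtain the model subsolution $w(r) = r^a$ with $a \in (0, 1/2)$, where $r$ denotes hyperbolic distance from $\sigma_p$; this satisfies $\partial w / \partial \eta = -\infty$ along $\sigma_p$. Setting $\rho := \max_{\overline{V_p}} r$ and $v := m - \rho^a + w$ produces a subsolution with $v \leq m$ on $V_p \cap \gamma'$ (where $u \geq m$ by hypothesis) and $\partial v / \partial \eta = -\infty$ along $\sigma_p$. Lemma \ref{lem-pdomaxcominfiniteslope} applied with $\gamma_1 = V_p \cap \gamma'$ and $\gamma_2 = \sigma_p$ then yields $v \leq u$, and hence $u \geq m - \rho^a$ in $V_p$. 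Because the radius $R$ and the Euclidean length of $\sigma_p$ are chosen uniformly in terms of $\Omega$ and $H$, the resulting $\rho$, and hence $c := \rho^a$, depends only on $\Omega$. A compactness argument then covers $\gamma'$ by finitely many such lenses $V_{p_1}, \dots, V_{p_N}$, whose union is the desired neighborhood $U$.

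Case (ii) is analogous, using a supersolution in place of a subsolution. I would replace $w$ by $-r^a$, use Lemma \ref{lem-BarriersInfiniteDerivative}(i) in place of (ii), and invoke the supersolution version of Lemma \ref{lem-pdomaxcominfiniteslope} (which follows by reversing signs in the argument from \cite{S}). The auxiliary arc $\sigma_p$ can now be chosen more flexibly as any short $C^2$ arc in $\Omega$ tangent to $\gamma'$ from inside with $\kappa_{euc}(\sigma_p) < 2H/y$ relative to the normal into the lens; the hypothesis $\kappa_{euc}(\gamma') > -2H/y$ is exactly what guarantees enough room inside $\Omega$ for such an arc to be inscribed close to $\gamma'$.

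The main obstacle I anticipate is the geometric construction of the lens, specifically ensuring that $\sigma_p$ closes up with $\gamma'$ into a domain $V_p$ whose width $\rho$ admits a uniform bound. This is especially delicate at the isolated exceptional points in (i) where $\kappa_{euc}(\gamma') = 2H/y$ rather than strictly exceeds it: there the osculating circle of $\gamma'$ can share curvature with $\sigma_p$, and the two arcs fail to separate cleanly. The remedy is to center $\sigma_p$ at a nearby point where the strict inequality does hold, which is possible because the exceptional points are isolated, and then rely on continuity of the construction to cover the original point as well.
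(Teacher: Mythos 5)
Your proposal is correct and follows essentially the same route as the paper: a thin lens bounded by a piece of $\gamma'$ and an interior auxiliary arc whose curvature toward the lens is $<-2H/y$ (resp.\ $<2H/y$), the translated barrier $r^a$ (resp.\ $-r^a$) from Lemma \ref{lem-BarriersInfiniteDerivative}, the infinite-slope maximum principle of Lemma \ref{lem-pdomaxcominfiniteslope}, and a finite covering of $\gamma'$ with a constant $c$ controlled by the lens width. The only cosmetic difference is that the paper closes each lens by an arc joining the two endpoints of a short subarc of $\gamma'$ (subdividing $\gamma'$ as needed) rather than by a circle tangent at a single point, which sidesteps the closing-up and cusp issues you flag at the end.
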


\begin{proof}
If $\kappa_{euc}(\gamma')\geq 2H/y$ with strict inequality except for isolated points, then, assuming that $\gamma'$ is small enough, there is a curve $\delta$ with $\kappa_{euc}(\delta)> 2H/y$ such that $\gamma'\cup \delta$ bounds a domain $U.$
Reverting the orientation of $\delta$ so that it encloses $U,$ it has $\kappa_{euc}(\delta)<- 2H/y.$

By approximating $\delta$ to $\gamma'$ if necessary, we can assume that $U$ is contained in the neighborhood of $\delta$ given by Lemma \ref{lem-BarriersInfiniteDerivative}, where $u^-$  is defined.
Applying Lemma \ref{lem-pdomaxcominfiniteslope} for the subsolution $v=u^-+m-\sup_{U}u^-,$ we conclude that $$u\geq m-(\sup_{U}u^- - \inf_{U}u^-)$$  in $U,$ proving the first part of the lemma for $c=\displaystyle{\sup_{U}u^- - \inf_{U}u^-.}$

We remark that the assumption on the size of $\gamma'$ is not important since we can divide it into small pieces and obtain $U$ as the union of a finite number of neighborhoods.

An analogous argument with the supersolution $u^+$ (by taking $v=u^{+}+M-\inf_{U}u^+$) implies the second assertion of the lemma.
\end{proof}

\section{Flux formula}\label{sec-flux}
In this section we will describe some flux formulas that we will need in order to establish our existence theory for solutions with infinite boundary values. The definition of the flux of a function presented here is very similar to the ones presented in previous results in other ambient spaces. The only essential difference is that  we deal with Killing graphs whose Killing vector field is not unitary ($|\partial_t|=y$), hence the norm of this vector field naturally appears in the definition of the flux.

Let $u\in C^2(\Omega)\cap C^1(\bar{\Omega})$ be a solution of (\ref{eq-cmc}) in a domain $\Omega\subset \hh.$ Then integrating (\ref{eq-cmc}) over $\Omega$ gives
\begin{equation}
\int_{\Omega}2yH{\rm d}\sigma=\int_{\partial \Omega}\langle yX_u, \nu\rangle {\rm d}s,
\label{eq-flux1}
\end{equation}
where ${\rm d}\sigma$ is the area element in $\mathbb{H}^2$, $X_u=\frac{y\nabla u}{\sqrt{1+y^2|\nabla u|^2}}$ and $\nu$ is the outer normal to $\partial \Omega.$ The right-hand integral is called the flux of $u$ across $\partial \Omega.$ Motivated by this equality, we define the flux of $u$ across any subarc of $\partial\Omega$ as follows.

\begin{definition}
Let $\gamma$ be a subarc of $\partial\Omega.$ Take $\eta$ a simple smooth curve in $\Omega$ so that $\gamma\cup\eta$ bounds a simply connected domain $\Delta_\eta.$ We define the flux of $u$ across $\gamma$ to be
\begin{equation}
F_u(\gamma)=2H\int_{\Delta_\eta}y {\rm d}\sigma -\int_{\eta}\langle yX_u, \nu\rangle {\rm d}s.
\end{equation}
\end{definition}

Observe that the first integral does not depend on $u$, it only depends on the domain. Moreover, the definition does not depend on the choice of $\eta.$ In fact, let $\widetilde{\eta}$ be another choice of curve and consider the 2-chain $\mathcal C$ with oriented boundary $\eta-\tilde{\eta}.$ Using (\ref{eq-cmc}) and the divergence theorem on $\mathcal C$ we get
$$
2H\int_{\Delta_{\tilde\eta}}y {\rm d}\sigma -2H\int_{\Delta_\eta}y {\rm d}\sigma= \int_{\tilde\eta}\langle yX_u, \nu\rangle {\rm d}s-\int_{\eta}\langle yX_u, \nu\rangle {\rm d}s.
$$
Therefore, the definition is well posed. Notice that if $u\in C^1(\Omega\cup\gamma),$ then we can choose $\eta$ to be $\gamma$ with the inverse orientation, and then $F_u(\gamma)=\int_{\gamma}\langle yX_u, \nu\rangle {\rm d}s.$

Notice that the definition of flux makes sense for any curve $\gamma$ contained in $\Omega.$ In fact, if $\gamma\subset \Omega$ we can consider a subdomain $U\subset\Omega$ such that $\gamma\subset\partial U$ and use the definition above.

The proof of the next three lemmas follows the same steps as the proof of Proposition 4.6 in \cite{Minh}.

\begin{lemma}
Let $u$ be a solution of (\ref{eq-cmc}) in a domain $\Omega.$ Then
\begin{enumerate}
\item $\int_{\Omega}2yH{\rm d}\sigma=\int_{\partial \Omega}\langle yX_u, \nu\rangle {\rm d}s;$

\item For every curve $\gamma$ in $\Omega$ with $\ell_{euc}(\gamma)<\infty,$ we have $|F_u(\gamma)|< \ell_{euc}(\gamma);$

\item For every curve $\gamma$ in $\overline{\Omega}$ with $\ell_{euc}(\gamma)<\infty,$ we have $|F_u(\gamma)|\leq \ell_{euc}(\gamma).$
\end{enumerate}
\label{lem-flux1}
\end{lemma}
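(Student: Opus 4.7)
The three items will be handled in sequence. Item (1) is a direct divergence-theorem computation, while items (2) and (3) rest on the pointwise bound $\|X_u\|<1$ combined with the relation $y\,ds = ds_{euc}$ between the hyperbolic and Euclidean arclength elements in the half-plane model.

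For item (1), I rewrite the CMC equation \eqref{eq-cmc} as $\dive(yX_u)=2yH$, using that $yX_u = y^2\nabla u/W$. Integrating over $\Omega$ and applying the hyperbolic divergence theorem, which is justified by the hypothesis $u\in C^2(\Omega)\cap C^1(\overline{\Omega})$, gives the stated identity at once.

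For item (2), I would start from the pointwise inequality
$$\|X_u\|^2 = \frac{y^2\|\nabla u\|^2}{1+y^2\|\nabla u\|^2} < 1,$$
so that by Cauchy--Schwarz $|\langle yX_u,\nu\rangle|\le y\|X_u\|<y$ at every point of $\gamma$. Since $\gamma$ is a compact subset of $\Omega$ and $\|X_u\|$ is continuous on $\Omega$, there is a constant $c<1$ with $\|X_u\|\le c$ along $\gamma$. Using $ds = ds_{euc}/y$, one then computes
$$|F_u(\gamma)| \le \int_\gamma y\|X_u\|\,ds \le c\int_\gamma y\cdot\frac{ds_{euc}}{y} = c\,l_{euc}(\gamma) < l_{euc}(\gamma),$$
which is the strict inequality.

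For item (3), since $\gamma\subset\overline{\Omega}$ may meet the boundary, I would approximate it by a sequence $\eta_n\subset\Omega$, obtained by pushing $\gamma$ slightly into the interior along a smooth vector field transverse to $\partial\Omega$ near the contact points, chosen so that $l_{euc}(\eta_n)\to l_{euc}(\gamma)$ and the region $\Delta_{\eta_n}$ bounded by $\gamma\cup\eta_n$ satisfies $\int_{\Delta_{\eta_n}} y\,d\sigma\to 0$. Applying the definition of flux with $\eta=\eta_n$ and invoking item (2) on $\eta_n\subset\Omega$ yields
$$|F_u(\gamma)| \le 2H\int_{\Delta_{\eta_n}} y\,d\sigma + l_{euc}(\eta_n);$$
letting $n\to\infty$ gives $|F_u(\gamma)|\le l_{euc}(\gamma)$, with the strict inequality of item (2) lost in the limit. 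The main obstacle is the construction of the approximating curves $\eta_n$ when $\gamma$ has only $C^0$ regularity or meets $\partial\Omega$ non-transversally, but for rectifiable curves bounded away from the ideal boundary of $\hh$ this reduces to a standard perturbation argument.
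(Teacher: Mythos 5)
Your proof is correct and follows essentially the same standard argument that the paper invokes by reference to Proposition 4.6 of \cite{Minh}: the divergence theorem for item (1), the pointwise bound $\|X_u\|<1$ together with $ds_{euc}=y\,ds$ for item (2), and interior approximation for item (3). (One small remark: the uniform bound $\|X_u\|\le c<1$ via compactness is unnecessary — and unavailable if $\gamma$ is merely a curve in $\Omega$ whose closure meets $\partial\Omega$ — but the strict inequality already follows from the pointwise strict inequality under the integral.)
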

%\begin{proof}
%The first item is immediate. Let us prove the other two. 

%First suppose that $\gamma\subset \Omega.$ Then
%$$
%|F_u(\gamma)|\leq \int_{\gamma}|\langle yX_u,\nu\rangle|{\rm d}s< \int_{\gamma}y{\rm d}s=\ell_{euc}(\gamma),
%$$ 
%where we use that $|X_u|<1$ in $\Omega.$

%Now suppose that $\gamma$ is not entirely contained in $\Omega.$ Given any $\epsilon>0,$ we can choose a curve $\eta\subset\Omega$ such that $\ell_{euc}(\eta)\leq \ell_{euc}(\gamma) + \epsilon.$ Hence 
%$$
%|F_u(\gamma)|=|F_u(\eta)|\leq \ell_{euc}(\eta)\leq \ell_{euc}(\gamma)+\epsilon.
%$$
%Since $\epsilon$ is arbitrary, this proves the result.
%\end{proof} 

\begin{lemma}
Let $u$ be a solution of (\ref{eq-cmc}) in a domain $\Omega$ and $\gamma\subset \partial\Omega$ be a piecewise $C^2$ arc satisfying $\kappa_{euc}(\gamma)\geq 2H/y$ and so that $u$ is continuous on $\gamma.$ Then
\begin{equation}
\left|\int_{\gamma}\langle yX_u, \nu \rangle{\rm d}s \right|<\ell_{euc}(\gamma).
\label{eq-flux2}
\end{equation}
\label{lem-flux2}
\end{lemma}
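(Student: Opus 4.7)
My plan is to push $\gamma$ slightly into $\Omega$ along Euclidean inner parallel curves and apply the strict interior flux bound of Lemma~\ref{lem-flux1}(2). Arguing by contradiction, assume $\int_\gamma\langle yX_u,\nu\rangle\,\mathrm{d}s=l_{euc}(\gamma)$; the opposite sign is symmetric. For $\rho>0$ small, let $n$ be the Euclidean inward unit normal along $\gamma$, let $\gamma_\rho=\{\gamma(t)+\rho\, n(t)\}\subset\Omega$ denote the Euclidean parallel curve at distance $\rho$, and let $\Delta_\rho\subset\Omega$ be the strip bounded by $\gamma$ and $\gamma_\rho$. Applying Lemma~\ref{lem-flux1}(1) to $\Delta_\rho$ together with the definition of $F_u$ gives
\[
F_u(\gamma)-F_u(\gamma_\rho)=2H\int_{\Delta_\rho}y\,\mathrm{d}\sigma,
\]
while $F_u(\gamma_\rho)<l_{euc}(\gamma_\rho)$ by Lemma~\ref{lem-flux1}(2) since $\gamma_\rho\subset\Omega$ is interior. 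Combining,
\begin{equation*}
l_{euc}(\gamma)-l_{euc}(\gamma_\rho)<2H\int_{\Delta_\rho}y\,\mathrm{d}\sigma. \tag{$\ast$}
\end{equation*}

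Next I would expand both sides of $(\ast)$ asymptotically in $\rho$. Parametrizing $\Delta_\rho$ via $(t,s)\mapsto\gamma(t)+s\,n(t)$ with $t$ Euclidean arclength along $\gamma$, the Euclidean parallel-curve identity yields exactly $l_{euc}(\gamma_\rho)=l_{euc}(\gamma)-\rho\int_\gamma\kappa_{euc}\,\mathrm{d}s_{euc}$, and a Taylor expansion of $1/y$ along $n$ gives
\[
\int_{\Delta_\rho}y\,\mathrm{d}\sigma=\int_{\Delta_\rho}\tfrac{1}{y}\,\mathrm{d}\sigma_{euc}=\rho\int_\gamma\tfrac{1}{y}\,\mathrm{d}s_{euc}+O(\rho^2).
\]
Substituting into $(\ast)$, dividing by $\rho$, and letting $\rho\to 0^+$ forces $\int_\gamma\bigl(\kappa_{euc}-\tfrac{2H}{y}\bigr)\,\mathrm{d}s_{euc}\le 0$, which combined with the pointwise hypothesis $\kappa_{euc}\ge 2H/y$ forces $\kappa_{euc}\equiv 2H/y$ along $\gamma$. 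In the generic case where the curvature condition is strict on a set of positive measure, $(\ast)$ already fails at first order for small $\rho$, yielding the desired contradiction.

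It remains to rule out the borderline case $\kappa_{euc}\equiv 2H/y$, where the first-order cancellation forces the continuity of $u$ on $\gamma$ into the argument. By Lemma~\ref{lem-vector} applied to $v_1=y\nabla u$ and $v_2=0$ one obtains the pointwise bound $y-\langle yX_u,\nu\rangle\ge y/(2W^2)$; a putative equality case therefore forces $W\to\infty$ (hence $|\nabla u|\to\infty$) along the $\gamma_\rho$ in average as $\rho\to 0$. Consequently $G_u$ would be asymptotically tangent along its boundary to the vertical cylinder $\gamma\times\rr$, which is itself a CMC-$H$ surface in Sol$_3$ by Proposition~\ref{prop_H} since $\kappa_{euc}\equiv 2H/y$ on $\gamma$. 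However, Lemma~\ref{lem-viz} together with the continuity of $u$ on $\gamma$ gives that $u$, and hence $G_u$ vertically, is bounded in a neighborhood of $\gamma$, contradicting a boundary maximum principle comparison with the unbounded cylinder.

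The main obstacle is precisely this borderline case $\kappa_{euc}\equiv 2H/y$: the curvature hypothesis degenerates at first order, and one must extract strict inequality from the continuity of $u$ on $\gamma$, effectively excluding asymptotic tangency between $G_u$ and the vertical CMC-$H$ cylinder over $\gamma$ via a boundary maximum principle type argument. The generic first-order step is routine; the genuine geometric input of the lemma lives in the borderline analysis.
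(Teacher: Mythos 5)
There is a genuine gap --- in fact two. First, the flux balance $F_u(\gamma)-F_u(\gamma_\rho)=2H\int_{\Delta_\rho}y\,{\rm d}\sigma$ is not correct as written: since $\gamma_\rho=\gamma+\rho\,n$ does not share its endpoints with $\gamma$, the strip $\Delta_\rho$ has two additional transversal boundary segments of Euclidean length $\rho$ each, and the only available control on their flux is $|F_u|\le {\rm length}=\rho$ (Lemma~\ref{lem-flux1}). That contribution is of the \emph{same order} $O(\rho)$ as both $l_{euc}(\gamma)-l_{euc}(\gamma_\rho)=\rho\int_\gamma\kappa_{euc}\,{\rm d}s_{euc}+O(\rho^2)$ and $2H\int_{\Delta_\rho}y\,{\rm d}\sigma=\rho\int_\gamma\tfrac{2H}{y}\,{\rm d}s_{euc}+O(\rho^2)$, so after dividing by $\rho$ and letting $\rho\to0^+$ your inequality $(\ast)$ only yields $\int_\gamma\bigl(\kappa_{euc}-\tfrac{2H}{y}\bigr)\,{\rm d}s_{euc}\le 2$, which is no contradiction. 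The first-order step therefore does not dispose of the ``generic'' case at all.

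Second, and more importantly, the case $\kappa_{euc}\equiv 2H/y$ is not an exceptional borderline: the hypothesis is only $\kappa_{euc}\ge 2H/y$, and in the applications of this lemma the arcs have curvature exactly $2H/y$. So this is the heart of the statement, and your treatment of it is not a proof. The averaged blow-up of $W$ along the curves $\gamma_\rho$ does not give tangency of $G_u$ with the cylinder $\gamma\times\rr$ at any actual point, so no boundary maximum principle can be invoked; to upgrade the averaged statement to genuine convergence of the tangent planes one needs the curvature estimates for stable CMC graphs (local graphs of bounded geometry over disks of uniform radius in the tangent planes), which is exactly the mechanism the paper uses in the proof of Lemma~\ref{lem-flux5} and which Nguyen's Proposition~4.6 --- to which the paper defers for the present lemma --- relies on. Moreover, the contradiction you announce (boundedness of $u$ near $\gamma$ versus unboundedness of the cylinder) is not a contradiction: nothing prevents a bounded graph from having boundary on an unbounded cylinder. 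In the standard argument the contradiction is instead that the limits of the bounded-geometry neighborhoods of $(p_k,u(p_k))$, $p_k\to p\in\gamma$, form a two-dimensional vertical strip of $\gamma\times\rr$ through the curve $\Gamma=\{(x,u(x))\,:\,x\in\gamma\}$, which would have to lie in $\overline{G_u}$, whereas the continuity of $u$ on $\gamma$ forces $\overline{G_u}\cap(\gamma\times\rr)$ to be the one-dimensional curve $\Gamma$ alone. Your proposal identifies some of the right ingredients (gradient blow-up, the cylinder, continuity of $u$) but does not assemble them into an argument.
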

%\begin{proof}
%It suffices to prove (\ref{eq-flux2}) for a small subarc $\xi$ of $\gamma.$ Take a point $p\in\xi$ and consider a small neighborhood $V_\epsilon\subset\Omega$  such that $\xi\subset\partial V_\epsilon$ and $\partial V_\epsilon$ satisfies $\kappa_{euc}\geq 2H/y$ (see Remark \ref{rem-curves}). Then by the existence theorem (Corollary \ref{cor-exist-particular}), there is a solution $v$ of (\ref{eq-cmc}) in $V_\epsilon$ with $v=u+1$ on $\xi$ and $v=u$ on $\partial V_\epsilon\setminus \xi$. 
%Using Lemma \ref{lem-vector} and the fact that $u$ and $v$ satisfy the constant mean curvature equation \eqref{eq-cmc}, we obtain

%$$\begin{array}{ccl}
%0&  < &\displaystyle{ \int_{V_{\xi}}\langle\nabla v-\nabla u,yX_v-yX_u\rangle{\rm d}\sigma}\\
%&&\\
%&=&\displaystyle{\int_{V_\xi}\mbox{div} ((v-u)(yX_v-yX_u)){\rm d}\sigma}\\
%&&\\
%&=&\displaystyle{\int_{\partial V_\xi} \langle (v-u)(yX_v-yX_u), \nu\rangle {\rm d}s}\\
%&&\\
%&=& \displaystyle{\int_{\xi}\langle yX_v-yX_u,\nu\rangle {\rm d}s}\\
%&&\\
%&=& \displaystyle{F_{v}(\xi)-F_u(\xi).}
%\end{array}$$

%Then 
%$$
%F_u(\xi)<F_v(\xi)\leq \ell_{euc}(\xi).
%$$
%Together with the second part of Lemma \ref{lem-flux1} we conclude
%$$
%\left|\int_{\xi}\langle yX_u, \nu \rangle\right|<\ell_{euc}(\xi).
%$$
%\end{proof}

\begin{lemma}
Let $u$ be a solution of (\ref{eq-cmc}) in a domain $\Omega$ and $\gamma\subset\partial \Omega$ be a piecewise $C^2$ arc.
\begin{enumerate}
\item If $u$ tends to $+\infty$ on $\gamma,$ then $\kappa_{euc}(\gamma)=2H/y$ and 
$$
\int_{\gamma}\langle yX_u, \nu\rangle {\rm d}s = \ell_{euc}(\gamma).
$$
\item If $u$ tends to $-\infty$ on $\gamma,$ then $\kappa_{euc}(\gamma)=-2H/y$ and 
$$
\int_{\gamma}\langle yX_u, \nu\rangle {\rm d}s = -\ell_{euc}(\gamma).
$$
\end{enumerate}
\label{lem-flux3}
\end{lemma}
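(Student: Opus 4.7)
My plan is to prove item (1); item (2) follows by applying item (1) to $-u$, which satisfies (\ref{eq-cmc}) with $H$ replaced by $-H$ and blows up positively on $\gamma$. I will establish three things in order: (a)\ $\kappa_{euc}(\gamma)\geq 2H/y$ via a barrier argument, (b)\ the flux identity $\int_\gamma\langle yX_u,\nu\rangle\,{\rm d}s=l_{euc}(\gamma)$ via a limiting argument on parallel curves, and (c)\ the reverse inequality $\kappa_{euc}(\gamma)\leq 2H/y$ as a consequence of (b) and a first-variation computation.

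For (a), assume for contradiction that $\kappa_{euc}(p)<2H/y$ at some interior $p\in\gamma$. Lemma \ref{lem-BarriersInfiniteDerivative}(i) furnishes a neighborhood $U\subset\overline\Omega$ of $p$ and a supersolution $u^+$ of (\ref{eq-cmc}) in $U$ with $\partial u^+/\partial\eta=+\infty$ along $\gamma\cap\partial U$. Since $u$ is bounded on the relatively open part of $\partial U$ lying inside $\Omega$, one can add a large constant $C$ to $u^+$ so that $u^++C\geq u$ there. The supersolution analogue of Lemma \ref{lem-pdomaxcominfiniteslope} (obtained by applying it to $-u,\,-u^+,\,-H$) then forces $u\leq u^++C$ throughout $U$, contradicting $u\to+\infty$ on $\gamma\cap\partial U$.

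For (b), fix a compact subarc $\gamma'\subset\!\subset\gamma$ and, for small $\epsilon>0$, let $\gamma'_\epsilon$ denote the hyperbolic-parallel curve to $\gamma'$ at inward distance $\epsilon$, joined to the endpoints of $\gamma'$ by two short transversal arcs $\sigma_\epsilon$ so that $\gamma'$, $\gamma'_\epsilon$ and $\sigma_\epsilon$ bound a simply connected strip $D_\epsilon\subset\Omega$. Taking $\eta=(\gamma'_\epsilon)^{-1}\cup\sigma_\epsilon$ in the definition of the flux gives
$$F_u(\gamma')=2H\!\int_{D_\epsilon}\!y\,{\rm d}\sigma+\int_{\gamma'_\epsilon}\langle yX_u,\widetilde\nu_\epsilon\rangle\,{\rm d}s+O(l_{euc}(\sigma_\epsilon)),$$
where $\widetilde\nu_\epsilon$ is the hyperbolic unit normal to $\gamma'_\epsilon$ pointing toward $\gamma'$. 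The first and third terms are $o(1)$ as $\epsilon\to 0$. For the middle one, the interior gradient estimate (Theorem \ref{theo-est-grad}) applied to suitable rescalings yields $|\nabla u(q)|\to+\infty$ uniformly as $q\to\gamma$, so $|X_u|\to 1$; combined with the fact that the level sets $\{u=n\}$ accumulate on $\gamma$ (hence $\nabla u/|\nabla u|$ aligns with $\widetilde\nu_\epsilon$), we obtain $\langle yX_u,\widetilde\nu_\epsilon\rangle\to y$ on $\gamma'_\epsilon$. Using $y\,{\rm d}s_{\rm hyp}={\rm d}s_{\rm euc}$ and $\gamma'_\epsilon\to\gamma'$, dominated convergence yields $F_u(\gamma')=l_{euc}(\gamma')$; arbitrariness of $\gamma'$ gives the identity on $\gamma$.

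For (c), by construction $F_u(\gamma'_\epsilon)=F_u(\gamma')-2H\!\int_{D_\epsilon}\!y\,{\rm d}\sigma+o(\epsilon)$, while $|F_u(\gamma'_\epsilon)|\leq l_{euc}(\gamma'_\epsilon)$ by Lemma \ref{lem-flux1}(2). Substituting $F_u(\gamma')=l_{euc}(\gamma')$ gives $l_{euc}(\gamma')-l_{euc}(\gamma'_\epsilon)\leq 2H\!\int_{D_\epsilon}\!y\,{\rm d}\sigma+o(\epsilon)$. Dividing by $\epsilon$ and using the first-variation formulas
$$\lim_{\epsilon\to0^+}\frac{l_{euc}(\gamma')-l_{euc}(\gamma'_\epsilon)}{\epsilon}=\int_{\gamma'}y\kappa_{euc}\,{\rm d}s_{\rm euc},\qquad \lim_{\epsilon\to0^+}\frac{1}{\epsilon}\!\int_{D_\epsilon}\!y\,{\rm d}\sigma=l_{euc}(\gamma'),$$
which both follow from \eqref{eq-relacaok_euc-k_hip}, we obtain $\int_{\gamma'}(y\kappa_{euc}-2H)\,{\rm d}s_{\rm euc}\leq 0$ for every subarc $\gamma'\subset\gamma$. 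By continuity of $\kappa_{euc}$ this localizes to $\kappa_{euc}\leq 2H/y$ pointwise, completing the proof. The main technical difficulty lies in part (b): rigorously upgrading the pointwise blow-up $|\nabla u|\to\infty$ to the uniform directional convergence $X_u\to\widetilde\nu_\epsilon$ needed to pass to the limit inside the integral. This is the standard Jenkins--Serrin maneuver and is typically handled via a blow-up/compactness argument exploiting Theorem \ref{theo-est-grad} to rule out tangent directions transverse to $\gamma$.
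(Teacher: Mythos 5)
Your overall skeleton (barrier argument for $\kappa_{euc}\geq 2H/y$, limiting flux identity on parallel curves, first variation for the reverse curvature inequality) is a legitimate Jenkins--Serrin-style route, and step (a) is sound: it is exactly the intended use of Lemma \ref{lem-BarriersInfiniteDerivative} together with the supersolution analogue of Lemma \ref{lem-pdomaxcominfiniteslope}. The reduction of item (2) to item (1) via $u\mapsto -u$, $H\mapsto -H$ is also fine. However, step (b) contains a genuine gap, and you have put the wrong tool in the place where the real work happens. Theorem \ref{theo-est-grad} is an \emph{upper} bound, $\|\nabla u(p)\|\leq f(u(p)/R)$; no rescaling of an upper bound can produce the lower bound $\|\nabla u(q)\|\to+\infty$ as $q\to\gamma$, nor can it ``rule out tangent directions transverse to $\gamma$.'' Likewise, ``the level sets $\{u=n\}$ accumulate on $\gamma$'' does not by itself force $\nabla u/\|\nabla u\|$ to align with the normal: one needs the level sets to converge to $\gamma$ in $C^1$, which is precisely what must be proved. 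The correct tool --- the one the paper uses (it defers to Proposition 4.6 of \cite{Minh} and reproduces the mechanism explicitly in its proof of Lemma \ref{lem-flux5}) --- is the curvature estimate for \emph{stable} CMC surfaces: the graph $G_u$ has uniformly bounded geometry, so a fixed-size neighborhood of $(q,u(q))$ in $G_u$ is a graph over a $\delta$-disk in the tangent plane; if the tangent plane did not become vertical and aligned with $\gamma\times\rr$ as $q\to\gamma$, that graph would project onto points of $\gamma$ (or beyond), where $u$ is infinite. This compactness argument is what yields both $N\to -\nu$ (hence $\langle yX_u,\nu\rangle\to y$ and the flux identity) and, by passing the cylinder $\gamma\times\rr$ off as a limit of the graph and invoking Proposition \ref{prop_H}, the equality $\kappa_{euc}=2H/y$ directly.

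Two smaller points. First, in step (c) you declare the flux across the transversal arcs $\sigma_\epsilon$ to be $o(\epsilon)$; Lemma \ref{lem-flux1} only gives $|F_u(\sigma_\epsilon)|\leq l_{euc}(\sigma_\epsilon)=O(\epsilon)$, which after dividing by $\epsilon$ contributes $O(1)$ and destroys the inequality $\int_{\gamma'}(y\kappa_{euc}-2H)\,{\rm d}s_{euc}\leq 0$. Upgrading $O(\epsilon)$ to $o(\epsilon)$ again requires the uniform alignment of $X_u$ with $\nu$ near $\gamma$ --- i.e., the very fact missing from (b) --- so (c) is not an independent route around the gap. Second, once the bounded-geometry argument is in place, steps (b) and (c) collapse into one: the curvature condition comes for free from the limit cylinder, so the first-variation computation, while correct in spirit, is unnecessary.
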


The following lemma is a simple extension of Lemma \ref{lem-flux3}.

\begin{lemma}\label{lem-flux4}
Let $\Omega$ be a domain and $\gamma\subset \partial\Omega$ be a compact piecewise $C^2$ arc. Let $\{u_n\}$ be a sequence of solutions of $(\ref{eq-cmc})$ in $\Omega$ such that each $u_n$ is continuous on $\gamma.$

\begin{enumerate}
\item If the sequence diverges to $+\infty$ uniformly on compact subsets of $\gamma$ while remaining uniformly bounded on compact subsets of $\Omega,$ then
$$
\lim_{n\rightarrow +\infty} \int_{\gamma}\langle yX_{u_n}, \nu\rangle {\rm d}s =\ell_{euc}(\gamma).
$$
\item If the sequence diverges to $-\infty$ uniformly on compact subsets of $\gamma$ while remaining uniformly bounded on compact subsets of $\Omega,$ then
$$
\lim_{n\rightarrow +\infty} \int_{\gamma}\langle yX_{u_n}, \nu\rangle {\rm d}s =-\ell_{euc}(\gamma).
$$
\end{enumerate}
\end{lemma}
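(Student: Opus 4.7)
The plan is to reduce Lemma \ref{lem-flux4} to Lemma \ref{lem-flux3} by extracting a limit solution $u$ on $\Omega$ and then transferring the flux formula through a curve interior to $\Omega.$ I treat part (1); part (2) is entirely analogous (or follows by applying (1) to $-u_n,$ which solves \eqref{eq-cmc} with mean curvature $-H$). By Theorem \ref{theo-compact} applied to the uniformly bounded sequence $\{u_n\}$ on compact subsets of $\Omega,$ a subsequence $u_{n_k}$ converges in $C^1$ on compact subsets of $\Omega$ to a solution $u$ of \eqref{eq-cmc}. Choosing any auxiliary curve $\eta\subset\Omega$ sharing endpoints with $\gamma$ and whose interior lies in a compact subset of $\Omega,$ the $\eta$-independent identity
$$F_{u_{n_k}}(\gamma)=2H\int_{\Delta_\eta}y\,d\sigma-\int_\eta\langle yX_{u_{n_k}},\nu\rangle\,ds$$
combined with the $C^1$-convergence on $\eta$ gives $F_{u_{n_k}}(\gamma)\to F_u(\gamma).$ A subsequence-of-subsequence argument then reduces the problem to identifying the limit as $l_{euc}(\gamma),$ which by Lemma \ref{lem-flux3} amounts to showing that $u\to+\infty$ along $\gamma.$

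For the blow-up of $u$ on $\gamma,$ I would first note that the hypotheses force $\kappa_{euc}(\gamma)\ge 2H/y$: at a point where $\kappa_{euc}<2H/y,$ Lemma \ref{lem-BarriersInfiniteDerivative}(i) would supply a local supersolution with $+\infty$ outward normal derivative, and the supersolution version of Lemma \ref{lem-pdomaxcominfiniteslope} would yield a uniform-in-$n$ upper bound for $u_n$ on a small subarc of $\gamma,$ contradicting the assumed divergence. Given the curvature condition, Lemma \ref{lem-viz}(i) applied to each $u_n$ on a compact subarc $\gamma'\subset\gamma$ produces a one-sided neighborhood $U$ of $\gamma'$ in $\overline\Omega$ and a constant $c$ independent of $n$ such that $u_n\ge\inf_{\gamma'}u_n-c$ in $U.$ Since $\inf_{\gamma'}u_n\to+\infty,$ passing to the limit along $n_k$ yields $u(q)\to+\infty$ as $q\to p$ for every $p$ in the relative interior of $\gamma,$ which is the hypothesis needed for Lemma \ref{lem-flux3}.

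Combining the two parts, $F_{u_{n_k}}(\gamma)\to F_u(\gamma)=l_{euc}(\gamma),$ and since every subsequence of $\{F_{u_n}(\gamma)\}$ has a further subsequence converging to $l_{euc}(\gamma),$ the full sequence converges. The main obstacle is the blow-up step: Lemma \ref{lem-viz}(i) is stated under the hypothesis that $\kappa_{euc}>2H/y$ except at isolated points, while the limit above can only guarantee the non-strict inequality. I would handle this by applying Lemma \ref{lem-viz}(i) with an interior parallel curve $\gamma_\delta$ to $\gamma$ at hyperbolic distance $\delta>0$ (whose Euclidean curvature is strictly above $2H/y$ after a suitable bending) in place of $\gamma,$ obtaining the lower bound on the corresponding $U_\delta$ with a constant that stays controlled as $\delta\to 0,$ and then letting $\delta\to 0$ to recover the blow-up of $u$ near $\gamma.$
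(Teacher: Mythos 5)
Your overall architecture (extract a limit solution $u$ by Theorem \ref{theo-compact}, transfer the flux through an interior curve $\eta$, and invoke Lemma \ref{lem-flux3} for $u$) is a genuinely different route from the paper's, which instead adapts the geometric proof of Lemmas \ref{lem-flux3} and \ref{lem-flux5}: curvature estimates for the stable graphs $G_{u_n}$ give uniform local graphs of bounded geometry around points approaching $\gamma,$ one shows the unit normals become horizontal and equal to $-\nu$ along $\gamma,$ and dominated convergence yields the flux limit. The flux-transfer part of your argument is fine modulo a small point: the convergence $u_{n_k}\to u$ is only on compact subsets of $\Omega$ while $\eta$ terminates on $\partial\Omega,$ so you need the uniform bound $|\langle yX_{u_{n_k}},\nu\rangle|\le y$ together with dominated convergence to control the ends of $\eta.$

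The genuine gap is the blow-up step. Lemma \ref{lem-viz}(i) cannot be the mechanism that forces $u\to+\infty$ on $\gamma,$ for two reasons. First, its conclusion --- a constant $c$ and a \emph{fixed} neighborhood $U$ of $\gamma'$ in $\overline\Omega,$ both independent of $n,$ with $u_n\ge\inf_{\gamma'}u_n-c$ on $U$ --- would give $u_n\to+\infty$ at every point of $U\cap\Omega,$ directly contradicting the hypothesis that $\{u_n\}$ is uniformly bounded on compact subsets of $\Omega.$ So in any non-vacuous instance of the lemma the curvature hypothesis of Lemma \ref{lem-viz}(i) must fail; indeed one is forced to $\kappa_{euc}(\gamma)\equiv 2H/y,$ which is exactly the situation at the boundary of a divergence set where this lemma gets applied. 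Second, your proposed repair via interior parallel curves $\gamma_\delta$ is a dead end: $\gamma_\delta$ lies in a compact subset of $\Omega,$ where $\inf_{\gamma_\delta}u_n$ stays bounded, so any barrier built from the values of $u_n$ on $\gamma_\delta$ yields only a bounded lower bound and no divergence can be extracted from it. If you want to keep the reduction to Lemma \ref{lem-flux3}, the divergence of the limit $u$ at interior points of $\gamma$ has to come from a different comparison, namely the one used at the end of the proof of Theorem \ref{teo-Bempty}: for $n\ge k$ compare $u_n$ with the local solution $v_k$ of the Dirichlet problem in $U_p\cap\Omega$ taking the value $k$ on $\gamma\cap U_p$ and a uniform lower bound for the $u_n$ on $\partial U_p\cap\Omega,$ then let $k\to\infty$ to get $u\ge v_k$ for all $k$ and hence $u(q)\to+\infty$ as $q\to p.$ Even that requires care (local solvability of the Dirichlet problem near $\gamma$ and a lower bound for $u_n$ on $\partial U_p\cap\Omega$ up to where it meets $\gamma$), which is presumably why the paper proves the statement directly at the level of the sequence, via the normal vectors, rather than through the limit function.
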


We have one more useful property of the flux given by the next lemma.

\begin{lemma}
Let $\Omega$ be a domain and $\gamma\subset \partial\Omega$ be a compact piecewise $C^2$ arc with $\kappa_{euc}(\gamma)=2H/y$. Let $\{u_n\}$ be a sequence of solutions of $(\ref{eq-cmc})$ in $\Omega$ such that each $u_n$ is continuous on $\gamma.$ Then if the sequence diverges to $-\infty$ uniformly on compact sets of $\Omega$ and remains uniformly bounded on compact subsets of $\gamma,$ we get
$$
\lim_{n\rightarrow +\infty} \int_{\gamma}\langle yX_{u_n}, \nu\rangle {\rm d}s =\ell_{euc}(\gamma).
$$
\label{lem-flux5}
\end{lemma}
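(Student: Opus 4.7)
The strategy is to reduce to Lemma \ref{lem-flux4}(1) by exploiting the fact that vertical translations of Sol$_3$ are isometries. Since vertical translation by a constant sends solutions of (\ref{eq-cmc}) to solutions and leaves $X_u$ (and hence the integrand $\langle yX_u,\nu\rangle$) unchanged, the flux across $\gamma$ is invariant under the substitution $u\mapsto u+c$.

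Fix an interior point $p_0\in\Omega$ and set $c_n:=u_n(p_0)$. By hypothesis $c_n\to-\infty$, so the translated sequence $w_n:=u_n-c_n$ is a sequence of solutions of (\ref{eq-cmc}) with $w_n(p_0)=0$ and, since $u_n$ remains uniformly bounded on compact subsets of $\gamma$ while $c_n\to-\infty$, $w_n\to+\infty$ uniformly on compact subsets of $\gamma$. Each $w_n$ is continuous on $\gamma$ because $u_n$ is. The heart of the proof is the claim that $\{w_n\}$ is uniformly bounded on compact subsets of $\Omega$; granted this, $\{w_n\}$ satisfies the hypotheses of Lemma \ref{lem-flux4}(1), so $F_{w_n}(\gamma)\to l_{euc}(\gamma)$, and the invariance $F_{u_n}(\gamma)=F_{w_n}(\gamma)$ gives the desired conclusion.

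To prove the uniform boundedness claim, apply the Harnack inequality (Theorem \ref{theo-harnack}) in a Euclidean ball $B_R(p_0)\subset\Omega$ with $R<\rho(0)$. Letting $m_n:=\inf_{B_R(p_0)}w_n$, Harnack applied to the non-negative function $w_n-m_n$ and centered at the point where its minimum is attained yields, in particular, $w_n(p_0)-m_n\leq\Phi(0,R)$; that is, $m_n\geq-\Phi(0,R)$ uniformly in $n$. A second application of Harnack, now centered at $p_0$ itself and using that the center value $w_n(p_0)-m_n$ is bounded by $\Phi(0,R)$, yields a uniform upper bound for $w_n-m_n$, hence for $w_n$, on a smaller neighborhood of $p_0$. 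One then propagates this two-sided uniform bound along a finite chain of overlapping Euclidean balls from $p_0$ to any given compact $K\subset\Omega$: at each step, the new ball is centered at a point already contained in a ball on which a uniform bound is in force, so the same pair of Harnack estimates furnishes a uniform bound on the new ball.

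The main obstacle is precisely this Harnack chain: in Theorem \ref{theo-harnack} the function $\Phi(t,r)$ grows and the radius $\rho(t)$ shrinks as $t$ increases, so one must keep the central value controlled at every step. Because the chain has length depending only on $K$ and on the geometry of $\Omega$ rather than on $n$, this is manageable, but it requires that the inductive bookkeeping be organized so that the bound produced at step $i$ feeds into the choice of radii and center value at step $i+1$ without degenerating.
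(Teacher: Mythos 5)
There is a genuine gap, and it is located exactly where you flag ``the heart of the proof'': the claim that $w_n:=u_n-u_n(p_0)$ is uniformly bounded on compact subsets of $\Omega$ is not only unproven by your Harnack chain --- it is false in general under the hypotheses of the lemma. The Euclidean minimal-surface analogue makes this concrete: take $\gamma$ a straight segment in $\{ax+by+c=0\}$, $\Omega$ on the side $\{ax+by+c>0\}$, and $u_n=-n(ax+by+c)$. These are solutions, they diverge to $-\infty$ uniformly on compact subsets of $\Omega$, they vanish identically on $\gamma$, and the flux conclusion of the lemma holds; yet $u_n-u_n(p_0)$ has oscillation of order $n$ on any compact set with nonempty interior. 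So no argument using only that the $w_n$ are solutions normalized at one point can yield the uniform bound you need, and the reduction to Lemma \ref{lem-flux4}(1) cannot be completed this way. The mechanism of failure in your Harnack chain is visible in the same example: Theorem \ref{theo-harnack} is a one-sided estimate (it bounds a \emph{non-negative} solution from above near the \emph{center} of a ball on which it is non-negative), and your first step requires centering a ball at the point where $\inf_{B_R(p_0)}w_n$ is attained. That minimum is in general attained on $\partial B_R(p_0)$, where the largest ball on which $w_n-m_n\geq 0$ is centered has radius tending to zero and does not reach $p_0$; there is no lower Harnack bound to substitute for it. (Your other ingredients are fine: $X_{u+c}=X_u$ so the flux is translation-invariant, and $w_n\to+\infty$ uniformly on compact subsets of $\gamma$.)

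The paper's proof avoids any global normalization and argues locally on the graphs themselves: since each $G_{u_n}$ is a stable CMC graph, curvature estimates give a uniform $\delta>0$ such that around each point $(p_k,u_n(p_k))$ the surface is a graph of bounded geometry over a $\delta$-disk in its tangent plane. Taking $p_k\to p\in\gamma$ and using that $u_n(p_k)\to-\infty$ while $u_n$ stays bounded over $\gamma$, these bounded-geometry pieces are forced to become vertical, so the unit normal converges to a horizontal unit vector; the sign of $\langle\nabla u_n,\nu\rangle$ then pins it down to $-\nu$, whence $\langle yX_{u_n},\nu\rangle\to y$ on $\gamma$ and the flux tends to $l_{euc}(\gamma)$. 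If you want to salvage a proof along your lines, you would need to import that kind of local geometric control (curvature estimates for stable $H$-graphs) rather than the Harnack inequality.
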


\begin{proof}
The idea to prove this result is basically the same idea as the proof of Lemma \ref{lem-flux3}. 

Denote by $G_{u_n}$ the graph of $u_n.$ Since each $G_{u_n}$ is a stable constant mean curvature surface, there exists $\delta>0$ (depending only on $\Omega$) such that, if $p_k$ is sufficiently far from the boundary (a condition that depends only on $\delta$), a neighborhood of each point $(p_k,u_n(p_k))$ in $G_{u_n}$ is a graph of bounded geometry over a disk of radius $\delta$ centered at the origin of the tangent plane $T_{p_k}G_{u_n}.$

 Now let $p\in\gamma$ and consider a sequence of points $p_k\in\Omega$ that converges to $p.$ Since $u_n(p_k)\to-\infty,$ then, for each $k,$ there exists $n_k$ large such that for $n\geq n_k,$ a neighborhood of $(p_k,u_{n}(p_k))$ is a graph of bounded geometry. Now consider the sequence of graphs, denoted by $G_{p_k}(\delta),$ at each point $(p_k, u_{n_k}(p_k))$ that have bounded geometry. If we denote by $N_{p_k}$ the unit normal vector to $G_{p_k}(\delta)$ at $p_k,$ we can prove exactly as in Lemma \ref{lem-flux3} that $N_{p_k}$ converges to a horizontal vector. Since $u_{n_k}\to-\infty$ on $p_k$ and $\{u_{n_k}(p)\}$ is bounded, we know that $\langle \nabla u_{n_k}, \nu\rangle \geq 0$ and then $\langle N_{u_{n_k}}, \nu\rangle\leq 0.$ Hence we conclude that $N_{u_{n_k}}(p)=-\nu.$ Since $p$ is arbitrary, we get
 $$
\lim_{n\rightarrow +\infty} \int_{\gamma}\langle yX_{u_n}, \nu\rangle {\rm d}s = \lim_{n\rightarrow +\infty} \int_{\gamma}y\langle -N_{u_n}, \nu\rangle {\rm d}s= \int_{\gamma}y {\rm d}s=\ell_{euc}(\gamma).
$$
\end{proof}

\section{Monotone convergence theorem and the divergence set}\label{sec-monotone}

As in the Euclidean case, we have the following consequence of Lemma \ref{lem-flux3} and the Local Harnack Inequality (Theorem \ref{theo-harnack}).

\begin{theorem}
Let $\{u_n\}$ be a monotone increasing or decreasing sequence of solutions of $(\ref{eq-cmc})$ on a  domain $\Omega\subset \hh.$ If the sequence is bounded at some point of $\Omega,$ then there exists a nonempty open set $U\subset \Omega$ such that the sequence $\{u_n\}$ converges to a solution of $(\ref{eq-cmc})$ in $U.$ The convergence is uniform on compact subsets of $U$ and the divergence is uniform on compact subsets of $V=\Omega\setminus U.$ If $V$ is nonempty, then its boundary $\partial V$ consists of arcs of curvature $\kappa_{euc}=\pm 2H/y$ and arcs of $\partial \Omega.$ These arcs are convex to $U$ for increasing sequences and concave to $U$ for decreasing sequences. In particular, no component of $V$ can consist of a single interior arc.
\label{theo-monotone}
\end{theorem}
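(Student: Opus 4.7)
The plan is to adapt the classical Jenkins-Serrin argument to the present setting, substituting Theorem \ref{theo-harnack} for the Euclidean Harnack inequality and Lemma \ref{lem-flux3} for the flux rigidity at blow-up. Assume, without loss of generality, that $\{u_n\}$ is increasing (the decreasing case is symmetric, using Lemma \ref{lem-flux3}(2) in place of (1)). Define
$$U:=\{p\in\Omega:\sup_n u_n(p)<+\infty\}$$
and $V:=\Omega\setminus U$; by monotonicity $u_n\to+\infty$ pointwise on $V$, and by hypothesis $U\neq\emptyset$. To show $U$ is open with uniform convergence on compacts, fix $p_0\in U$ with $u_n(p_0)\le M$ for all $n$, choose a ball $B\ni p_0$ with $\overline B\subset\Omega$, and set $m:=\min_{\overline B}u_1$. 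Since \eqref{eq-cmc} is invariant under additive constants, each $u_n-m\ge u_1-m\ge 0$ is a nonnegative solution on $B$, so Theorem \ref{theo-harnack} yields $u_n(q)-m\le\Phi(M-m,d(p_0,q))$ on the Harnack ball $B_{\rho(M-m)}(p_0)\subset B$, uniformly in $n$. Monotonicity and Theorem \ref{theo-compact} then force $\{u_n\}$ to converge uniformly on compact subsets to a solution $u$ of \eqref{eq-cmc}.

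Uniform divergence of $\{u_n\}$ on any compact $K\subset V$ follows by contradiction: were there $n_k\to\infty$ and $p_k\in K$ with $u_{n_k}(p_k)\le M$, extracting a subsequence $p_k\to p^*\in K$ and applying the same Harnack bound would make $\{u_{n_k}\}$ uniformly bounded on a fixed small ball around $p^*$, contradicting $u_n(p^*)\to+\infty$. Next I would show that $u\to+\infty$ as $q\in U$ approaches any $p\in\partial V\cap\Omega$: given $M$, uniform divergence at $\{p\}$ produces an index $N$ with $u_N(p)>M+1$, continuity then gives $u_N>M$ on a neighborhood of $p$, and hence $u(q)\ge u_N(q)>M$ for every $q\in U$ close to $p$.

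It remains to identify $\partial V\cap\Omega$ as arcs of prescribed curvature. Given a $C^2$ arc $\gamma\subset\partial V\cap\Omega$ and a component $U_0$ of $U$ with $\gamma\subset\partial U_0$, the restriction $u|_{U_0}$ is a solution blowing up on $\gamma$, and Lemma \ref{lem-flux3}(1) forces $\kappa_{euc}(\gamma)=2H/y$ with the normal orientation in which $\gamma$ is convex to $U_0$; the decreasing case is analogous via Lemma \ref{lem-flux3}(2). A single interior arc $\gamma$ cannot be a component of $V$, since then $U$ would lie on both sides and Lemma \ref{lem-flux3}(1) applied from each side would force $\int_\gamma\langle yX_u,\nu\rangle\,ds=+l_{euc}(\gamma)$ for each of the two opposite choices of $\nu$, a contradiction. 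The main technical obstacle I expect is the preliminary regularity step tacitly assumed above: before Lemma \ref{lem-flux3} can be invoked, one must verify that $\partial V\cap\Omega$ locally decomposes into piecewise $C^2$ arcs. This parallels the standard Jenkins-Serrin regularity argument and is, in my view, the most delicate point of the proof.
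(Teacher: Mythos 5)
Your proposal is correct and follows exactly the route the paper intends: the paper gives no written proof of Theorem \ref{theo-monotone}, stating only that it follows ``as in the Euclidean case'' from the Harnack inequality (Theorem \ref{theo-harnack}) and the flux rigidity of Lemma \ref{lem-flux3}, which are precisely the two ingredients you deploy. Your honest flagging of the piecewise-$C^2$ regularity of $\partial V$ as the tacit step is apt, since the paper likewise defers this to the classical Jenkins--Serrin/Spruck argument; note also that for $H>0$ the ``single interior arc'' exclusion already follows from your curvature conclusion (an arc cannot have $\kappa_{euc}=2H/y>0$ with respect to both normals), which is cleaner than the double-flux phrasing.
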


Moreover, we have the following property for the divergence set $V.$

\begin{lemma}\label{lem-Vboundary}
Let $\Omega$ be a domain bounded in part by an arc $\gamma$ with $\kappa_{euc}(\gamma)\geq 2H/y.$ Let $\{u_n\}$ be a monotone increasing or decreasing sequence of solutions of $(\ref{eq-cmc})$ in $\Omega$ with each $u_n$ continuous in $\Omega\cup\gamma.$ Suppose $\alpha$ is an interior arc of $\Omega$ with curvature $\kappa_{euc}(\alpha)=2H/y$ forming part of the boundary of the divergence set $V.$ Then $\alpha$ cannot terminate at an interior point of $\gamma$ if $\{u_n\}$ either diverges to $\pm \infty$ on $\gamma$  or remains uniformly bounded on compact subsets of $\gamma.$
\end{lemma}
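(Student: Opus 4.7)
I will argue by contradiction. Assume $\alpha$ terminates at a point $p$ in the interior of $\gamma$; without loss of generality take $\{u_n\}$ monotone increasing so that $u_n\to+\infty$ uniformly on compact subsets of $V$. By Theorem~\ref{theo-monotone}, $V$ lies locally on one side of $\alpha$ with the convergence set $U$ on the other. For small $\varepsilon>0$, the intersection $\Omega\cap B_\varepsilon(p)$ is cut by $\alpha$ into two curvilinear triangles $T^U_\varepsilon\subset U$ and $T^V_\varepsilon\subset V$, each bounded by a short piece $\alpha_\varepsilon\subset\alpha$ emanating from $p$, a short piece $\gamma^U_\varepsilon$ or $\gamma^V_\varepsilon$ of $\gamma$ emanating from $p$, and an interior cross-cut $\beta^U_\varepsilon$ or $\beta^V_\varepsilon$ taken to be a near-straight Euclidean chord of length $O(\varepsilon)$, so that each triangle has area $O(\varepsilon^2)$.

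The strategy is to apply the flux identity of Lemma~\ref{lem-flux1}(1) to $u_n$ on $T^U_\varepsilon$ and pass to the limit in $n$. Since $T^U_\varepsilon\subset U$ the sequence $\{u_n\}$ is uniformly bounded on compact subsets of $T^U_\varepsilon$, while on $\alpha_\varepsilon\subset\partial V$ the monotone convergence combined with the continuity of each $u_n$ forces $u_n\to+\infty$ uniformly on compact subarcs (Dini). Lemma~\ref{lem-flux4} then gives $\int_{\alpha_\varepsilon}\langle yX_{u_n},\nu\rangle\,ds\to l_{euc}(\alpha_\varepsilon)$, with $\nu$ the outer normal to $T^U_\varepsilon$. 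In the hypothesis case where $u_n\to+\infty$ on $\gamma$, Lemma~\ref{lem-flux4} similarly yields $\int_{\gamma^U_\varepsilon}\to l_{euc}(\gamma^U_\varepsilon)$, while Lemma~\ref{lem-flux1}(2) bounds $|\int_{\beta^U_\varepsilon}|\le l_{euc}(\beta^U_\varepsilon)$. Substituting into the flux identity (whose left-hand side is $O(\varepsilon^2)$) yields
\[
l_{euc}(\alpha_\varepsilon)+l_{euc}(\gamma^U_\varepsilon)-l_{euc}(\beta^U_\varepsilon)\le\int_{T^U_\varepsilon}2yH\,d\sigma=O(\varepsilon^2).
\]
Since $\alpha_\varepsilon$ and $\gamma^U_\varepsilon$ are two sides of the curvilinear triangle $T^U_\varepsilon$ meeting at $p$ and $\beta^U_\varepsilon$ is the (essentially straight) third side, the strict triangle inequality forces the left-hand side to be $\Theta(\varepsilon)$, contradicting the right-hand side for $\varepsilon$ small.

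In the remaining hypothesis case, where $\{u_n\}$ is uniformly bounded on compact subsets of $\gamma$, the corresponding $T^U_\varepsilon$ calculation now uses only $|\int_{\gamma^U_\varepsilon}|<l_{euc}(\gamma^U_\varepsilon)$ from Lemma~\ref{lem-flux2} and is no longer strong enough. I therefore switch to $T^V_\varepsilon$: the same argument as above with reversed outer normal gives $\int_{\alpha_\varepsilon}\to -l_{euc}(\alpha_\varepsilon)$, and along $\gamma^V_\varepsilon\subset\partial\Omega$ we have $u_n\to+\infty$ on compact subsets of $T^V_\varepsilon$ while $u_n$ stays bounded on $\gamma^V_\varepsilon$. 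The symmetric version of Lemma~\ref{lem-flux5} for sequences diverging to $+\infty$ in the interior (proved by repeating its bounded-geometry argument and tracking the opposite sign of the limit horizontal normal) then gives $\int_{\gamma^V_\varepsilon}\to -l_{euc}(\gamma^V_\varepsilon)$, yielding the twin inequality $l_{euc}(\alpha_\varepsilon)+l_{euc}(\gamma^V_\varepsilon)-l_{euc}(\beta^V_\varepsilon)\le O(\varepsilon^2)$ and the same triangle-inequality contradiction. Decreasing sequences are handled by reflecting the whole argument ($u_n\to-\infty$ in $V$ with signs flipped throughout).

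The main obstacle is the degenerate tangential configuration in which $\alpha$ and $\gamma$ share a common tangent direction at $p$: the strict triangle inequality then degenerates from $\Theta(\varepsilon)$ to $o(\varepsilon)$, and one needs a second-order expansion of $l_{euc}(\alpha_\varepsilon)+l_{euc}(\gamma^{U,V}_\varepsilon)-l_{euc}(\beta^{U,V}_\varepsilon)$ in terms of the curvatures $\kappa_{euc}(\alpha)(p)=2H/y$ and $\kappa_{euc}(\gamma)(p)\ge 2H/y$. The resulting $\Theta(\varepsilon^2)$ gap (positive because $\alpha$ and $\gamma$ lie on opposite sides of the common tangent in at least one of the two triangles) must be compared carefully with the $H$-area contribution of the same order; a subsidiary point is to verify that the symmetric Lemma~\ref{lem-flux5} remains applicable when $\kappa_{euc}(\gamma_\varepsilon^V)>2H/y$, or else to rule out the bounded-on-$\gamma^V$ regime at such a point by a direct comparison with a local sub/supersolution as in Section~5.
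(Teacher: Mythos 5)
Your core mechanism is the paper's: a flux balance over a shrinking curvilinear triangle at $p$ whose two $2H/y$-sides contribute $+l_{euc}$ in the limit while the chord contributes at least $-l_{euc}$, set against an area term of order $\varepsilon^2$. But the proposal does not close, and the two places where you yourself signal trouble are genuine, load-bearing gaps. First, the ``degenerate tangential configuration'' must be excluded, not deferred to an unspecified second-order expansion: after the preliminary reduction (which you skip) to $\kappa_{euc}(\gamma)\equiv 2H/y$ near $p$ --- forced by Lemma~\ref{lem-flux3} when $u_n\to+\infty$ on $\gamma$, and handled by Lemma~\ref{lem-viz} otherwise --- both $\alpha$ and $\gamma$ solve the same second-order ODE for prescribed curvature $2H/y$. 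If they were tangent at $p$ with curvature vectors on the same side they would coincide near $p$, contradicting that $\alpha$ is an interior arc; with curvature vectors on opposite sides, $\alpha$ would lie on the non-$\Omega$ side of $\gamma$ near $p$. Hence the angle at $p$ lies in $(0,\pi)$, the strict triangle inequality yields a $\Theta(\varepsilon)$ lower bound, and the comparison with the $O(\varepsilon^2)$ area term is clean. Without this observation your main inequality proves nothing in the tangent case, where both sides are of the same order.

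Second, the case analysis is structurally off. ``WLOG increasing, handle decreasing by reflection'' is not legitimate: $u\mapsto-u$ flips the sign of $H$ in \eqref{eq-cmc}, so the hypotheses $\kappa_{euc}(\gamma)\ge 2H/y$ and $\kappa_{euc}(\alpha)=2H/y$ are not preserved, and the decreasing case is exactly where the paper's Lemma~\ref{lem-flux5} (stated for $-\infty$) must be used. Conversely, the sub-case you labor over --- an increasing sequence uniformly bounded on $\gamma$ --- is where you invoke an unproved ``symmetric Lemma~\ref{lem-flux5}'' whose applicability for $\kappa_{euc}>2H/y$ you concede you cannot verify; but that sub-case is vacuous: Lemma~\ref{lem-viz}~(ii) bounds an increasing sequence from above in a neighborhood of a compact subarc of $\gamma$ containing $p$, so the sequence converges there, that neighborhood lies in $U$, and $\alpha\subset\partial V$ cannot reach $p$ at all. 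The paper's route --- reduce to $\kappa_{euc}(\gamma)\equiv 2H/y$, note that boundedness on $\gamma$ forces the divergence in $V$ to be to $-\infty$, then apply the stated Lemma~\ref{lem-flux5} on the $\gamma$-side and Lemma~\ref{lem-flux4} on the $\alpha$-side (computed from the $U$-side of $\alpha$) --- avoids both the unproved flux lemma and the false reflection. A minor point: uniform divergence of $u_n$ on compact subarcs of $\alpha$ does not follow from ``monotonicity plus continuity plus Dini'' (which say nothing about $\partial V$), but it does follow from Theorem~\ref{theo-monotone}, since $V=\Omega\setminus U$ is closed in $\Omega$ and contains $\alpha$.
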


\begin{proof}

Let $\alpha\subset\partial V$ be an interior arc of $\Omega$ with $\kappa_{euc}(\alpha)=2H/y$ and suppose that it terminates at an interior point $p\in\gamma.$ Up to restricting ourselves to a subarc of $\gamma$ that contains $p,$ we can assume that $\gamma$ is a $C^2$ arc. By Lemma \ref{lem-flux3}, the sequence $\{u_n\}$ cannot diverge to $-\infty$ on $\gamma;$ and if the Euclidean curvature of $\gamma$ is not identically $2H/y,$ then the sequence $\{u_n\}$ cannot diverge to $+\infty$ on $\gamma.$ Hence if the Euclidean curvature of $\gamma$ is not identically $2H/y,$ the sequence $\{u_n\}$ remains uniformly bounded on compact subsets of $\gamma,$ but it follows from Lemma \ref{lem-viz} that a neighborhood of $\gamma$ is contained in $U,$ a contradiction. Thus we can assume that $\kappa_{euc}(\gamma)=2H/y.$

Suppose $\{u_n\}$ diverges to $+\infty$ on $\gamma$ and there exists only one such $\alpha$ that terminates at $p.$  It follows that $\{u_n\}$ diverges to $+\infty$ on $\alpha.$ In fact, since there is only one such $\alpha,$ then one sub arc $\gamma'$ of $\gamma$ with $p$ as an endpoint is contained necessarily  in $\partial V,$ hence we can choose a point $q\in\alpha$ and $r\in\gamma'$ such that the triangle with vertices $p,q,r$ is entirely contained in $V;$ thus since the divergence in $V$ is uniform on compact subsets and we already know that $u_n\to+\infty$ on $\gamma,$ we conclude that $\{u_n\}$ diverges to $+\infty$ on $\alpha.$

Now let $q$ be a point of $\alpha$ close to $p$ and choose a point $r$ on $\gamma$ near $p$ so that the Euclidean geodesic segment $\overline{rq}$ lies in $U.$ Let $T$ be the triangle formed by $\overline{rq}$ and the arcs $\stackrel \frown {qp} \ \subset \alpha$ and $ \stackrel \frown {pr} \ \subset\gamma$ of curvature $2H/y.$

By (\ref{eq-flux1}) we have
$$
\int_T 2yH{\rm d}\sigma=F_{u_n}(\stackrel \frown {qp})+F_{u_n}(\stackrel \frown {pr}) +F_{u_n}(\overline {rq}).
$$

Since by Lemma \ref{lem-flux4} we know that
$$
\lim_{n\to\infty}F_{u_n}(\stackrel \frown {qp})=\ell_{euc}(\stackrel \frown {qp}), \ \ \lim_{n\to\infty}F_{u_n}(\stackrel \frown {pr})=\ell_{euc}(\stackrel \frown {pr})
$$
and by Lemma \ref{lem-flux1} (item $2$), $F_{u_n}(\overline {rq})>-\ell_{euc}(\overline {rq}),$ then we get
\begin{equation}
\frac{\int_T 2yH{\rm d}\sigma}{\ell_{euc}(\overline {rq})}\geq \frac{\ell_{euc}(\stackrel \frown {qp})+\ell_{euc}(\stackrel \frown {pr})}{\ell_{euc}(\overline{rq})}-1.
\label{eq-1}
\end{equation}

Keeping $p$ fixed, we move the point $q$ to $\bar{q}$ and $r$ to $\bar{r}$ along the same arcs so that $\ell_{euc}(\stackrel \frown {\bar{q}p})=\lambda \ell_{euc}(\stackrel \frown {qp})$ and $\ell_{euc}(\stackrel \frown {p\bar{r}})=\lambda \ell_{euc}(\stackrel \frown {pr})$ for $\lambda<1.$ Observe that $\int_T 2yH{\rm d}\sigma$ has a quadratic dependency on $\lambda$ while the other terms have a linear dependency. Then the left-hand side of (\ref{eq-1}) tends to zero when $\lambda\to0$, and the right-hand side remains uniformly positive, a contradiction.  So if such $\alpha\subset \partial V$ exists, it should not be unique. However, if we assume that there are at least two arcs $\alpha_1,\alpha_2\subset\partial V$ that terminate at $p,$ then again we could find a triangle $T\subset U$ whose edges are two constant curvature $2H/y$ arcs (where $u_n\to+\infty$) and an Euclidean geodesic segment as before (perhaps with $\partial T\cap \gamma=\{p\}$), and the same argument would give us a contradiction.

If the sequence remains uniformly bounded on compact subsets of $\gamma,$ we choose $r$ on $\gamma$ so that $T$ is contained in $V.$ By Lemma \ref{lem-localbarr} the sequence must diverge to $-\infty$ in $V.$ We now reach a contradiction as above using Lemma \ref{lem-flux5}.

\end{proof}

\section{Admissible domains}

\begin{definition}\label{def-admsdomain}
We say that a bounded domain $\Omega$ is admissible if:\begin{enumerate}
\item[i)] it is simply connected;
\item[ii)] its boundary consists in the union of $C^2$ open arcs $\{A_i\},$ $\{B_i\}$ and $\{C_i\}$ and their endpoints, satisfying $\kappa_{euc}(A_i)=2H/y,$ $\kappa_{euc}(B_i)=-2H/y$ and $\kappa_{euc}(C_i)\ge 2H/y,$ respectively (with respect to the interior of $\Omega$) and no two of the arcs $A_i$ and no two of the arcs $B_i$ have a common endpoint;
\item[iii)] if the family $\{B_i\}$ is non empty, we require that for all $i,$ if $p_i$ and $q_i$ are initial and final points of $B_i,$ there is an arc $B_i^*\subset \hh\setminus\Omega$ of $\kappa_{euc}= 2H/y$ such that the domain $W$ bounded by $B_i\cup B_i^* \cup \{p_i, q_i\}$ is the smallest convex domain bounded by arcs of Euclidean curvature $2H/y$ (with respect to the interior of $W$) that connects $p_i$ and $q_i$.
\item[iv)] We define $\Omega^*$ as the domain obtained by replacing the boundary arcs $B_i$ of $\partial \Omega$ by $B_i^*.$ We require $\Omega^*$ to be simply connected and admit a bounded subsolution of \eqref{eq-cmc}. If $\{B_i\}$ is empty, we require $\Omega$ to admit a bounded subsolution of \eqref{eq-cmc}.
\end{enumerate}
\end{definition}

Corollary \ref{cor-exist-particular} states that, if the domain is small enough (depending on $H$), it admits a solution and hence condition iv) is satisfied.

\begin{remark}
We compare condition iii) to its analogous version in $\mathbb{R}^3$ formulated in \cite{S}: ``An arc $B_i$ of constant Euclidean curvature $2H$ is required to have length less than $\pi/2H,$ so that it consists in less than half-circle and hence its reflection along the line connecting its endpoints, together with $B_i$ bounds the smallest convex set bounded by two curves of curvature $2H,$ oriented inwards, that connect the endpoints of $B_i.$ Thus, $B_i^*$ can be taken as the reflection of $B_i$." Since in our setting we do not have reflections in all directions, this fact is no longer true, and therefore we need condition iii) in Definition  \ref{def-admsdomain}.\\
This condition is necessary in the proofs of our results, since we first establish a sequence of solutions in $\Omega^*$ and then we prove that the sequence diverges in $\Omega ^*\setminus \Omega,$ using our knowledge of the divergence set (Section \ref{sec-monotone}), which can only be applied if iii) holds.
\end{remark}

\begin{definition}[Admissible polygon] Let $\Omega$ be an admissible domain. We say that $\mathcal{P}\subset\overline{\Omega}$ is an admissible polygon if it is a polygon whose sides are curves of Euclidean curvature $\pm 2H/y,$ which we call $2H/y-$curves,
and whose vertices are chosen from among the endpoints of arcs of the families $\{A_i\}$ and $\{B_i\}.$
\end{definition}

\begin{definition} [Dirichlet problem] Let $\Omega$ be an admissible domain and fix $H>0$. The generalized Dirichlet problem is to find a solution of  (\ref{eq-cmc}) in $\Omega$ of
mean curvature $H$, which assumes the value $+\infty$ on each arc $A_i$ , $-\infty$ on each arc $B_i$ and prescribed continuous data on each arc $C_i$.
\label{def_diriinfinito}
\end{definition}

For an admissible polygon $\mathcal{P},$ we denote by $\alpha$ and $\beta$ the total Euclidean length of the arcs in $\partial \mathcal{P}$ which belong to $A_i$ and $B_i,$ respectively, by $\ell$ the perimeter of $\cal{P},$ and $\mathcal{I}(\mathcal{P})=\int_{\mathcal{P}}\frac{1}{y}{\rm d}a,$ where ${\rm d}a$ is the Euclidean area element.
We remark that the quantity $\mathcal{I}(\mathcal{P})$ has already appeared in Section \ref{sec-flux} (see equation \eqref{eq-flux1}). In fact, it corresponds to the hyperbolic integral:
$$\mathcal{I}(\mathcal{P})=\int_{\mathcal{P}}\frac{1}{y}{\rm d}a=\int_{\mathcal{P}}y{\rm d}\sigma.$$

\section{Existence Theorems - part I}

In this section we present necessary and sufficient conditions for some admissible domains to have a solution to \eqref{eq-cmc} that assigns a continuous boundary data in the arcs $C_i,$ $+\infty$ in the arcs $A_i$'s and $-\infty$ in the arcs $B_i$'s. We will prove Theorem \ref{teo-Bempty} here but in order to prove Theorem \ref{teo-Aempty} we first need to show the result for the case where $\kappa(C_i)>2H/y$ and construct barriers. The difference between the proofs of these two theorems is that on Theorem \ref{teo-Bempty} the assumption that the assigned boundary data is bounded below allows us to use a bounded subsolution as a barrier from below; however in Theorem \ref{teo-Aempty} the assigned boundary data is bounded from above and the bounded subsolution does not work, so in this case we need to construct a barrier (see Proposition \ref{prop-constBC}).

\begin{proof}[Proof of Theorem \ref{teo-Bempty}]
First suppose the inequality $2\alpha <\ell+2H\mathcal{I}(\mathcal{P})$ holds for any admissible polygon $\cal P.$

Let $u_n$ be the solution of \eqref{eq-cmc} in $\Omega$ assuming the boundary data $$u_n=\left\lbrace\begin{array}{ll}
                 n& \text{ on }A_i\\
                 \min\{f,n\}&\text{ on }C_i.
\end{array}\right.$$
Each $u_n$ exists and is unique by Theorem \ref{theo_existence}. From the General Maximum Principle (Lemma \ref{genmaxprin}), the sequence $\left\{u_n\right\}$ is monotone increasing and the second item of Lemma \ref{lem-viz} implies that $\{u_n\}$ is bounded above in a neighborhood of each arc $C_i;$ hence the Monotone Convergence Theorem (Theorem \ref{theo-monotone}) applies. 

Let $U$ be the open set in which the sequence $\{u_n\}$ converges to a solution of $(\ref{eq-cmc})$ and let $V$ be its divergence set. From Theorem \ref{theo-monotone} and Lemma \ref{lem-Vboundary}, each connected component of $V$ must be bounded by curves of $\partial\Omega$ and interior curves of $\kappa_{euc}=- 2H/y,$ if oriented to $V.$ Besides, its vertices must be among the endpoints of $\{A_i\}$ and, from Lemma \ref{lem-viz}, item \emph{ii)}, $\partial V$ cannot contain any boundary arc of the family $\{C_i\}.$ Hence any connected component of $V$ is an admissible polygon.

If $V$ is not an empty set, let $\mathcal{P}$  be a connected component of $V.$ Since each $u_n$ is a solution to \eqref{eq-cmc} in $\mathcal{P},$ equality \eqref{eq-flux1} implies that 
\begin{equation}\label{eq-desiVempt}
2H\mathcal{I}(\mathcal{P})=\int_{\mathcal{P}}2yH{\rm d}\sigma=\int_{\cup_i A_i}\langle yX_{u_n}, \nu\rangle {\rm d}s+\int_{\partial \mathcal{P}\setminus \cup_i A_i}\langle yX_{u_n}, \nu\rangle {\rm d}s,\end{equation}
where $\cup_i A_i$ takes only the $i$'s such that $A_i$ is a part of $\partial \mathcal{P}.$

From Lemma \ref{lem-flux1} and \ref{lem-flux4}, we conclude that

$$\left|\int_{\cup_i A_i}\langle yX_{u_n}, \nu\rangle {\rm d}s\right|\leq \alpha$$  and 
$$\lim_{n\rightarrow +\infty} \int_{\partial \mathcal{P}\setminus \cup_i A_i}\langle yX_{u_n}, \nu\rangle {\rm d}s =-(\ell-\alpha),
$$
which implies that $2H\mathcal{I}(\mathcal{P})\le -(\ell-\alpha)+\alpha,$ a contradiction. We then conclude that $V$ is an empty set and that $U=\Omega.$

It remains to see that $u$ takes the required boundary data.
To see that $u=f_i$ on each $C_i,$ we follow the same barrier argument as the proof of Theorem \ref{theo_existence}, noticing that for each $p\in C_i,$ the sequence $u_n$ is uniformly bounded in a neighborhood of $p,$ a consequence of Lemma \ref{lem-viz} and the fact that $\Omega$ admits a global subsolution to the Dirichlet Problem.

Let us notice that $u$ indeed goes to $+\infty$ on the arcs $\{A_i\}.$
Recall that $u_0$ is a global solution to the Dirichlet Problem with $u_0\leq u_n$ in $\Omega.$ Given $p\in A_i,$ let $U_p$ be a neighborhood of $p$ for which the existence result (Theorem \ref{theo_existence}) applies. Take $v_k$ the solution to the Dirichlet problem in $U_p\cap \Omega$ with boundary data $m=\min_U {u_0}$ on $\partial U_p \cap \Omega$ and $k$ on $\partial \Omega \cap U_p.$ Then for $n\geq k,$ $u_n\geq v_k$ from the Maximum Principle. Therefore, $u\geq v_k$ for all $k$ and $u$ diverges to $+\infty$ on $U_p\cap \partial \Omega.$

Reciprocally, assume that the Dirichlet problem has a solution in $\Omega.$ We know the equality \eqref{eq-flux1} holds in any admissible polygon $\cal P$ in $\overline{\Omega}.$

By Lemma \ref{lem-flux3}, we have $$\int_{\cup_i A_i}\langle yX_{u}, \nu\rangle {\rm d}s=\alpha.$$

Using Lemma \ref{lem-flux1} for the curves $\partial \mathcal{P}\setminus \cup_i A_i$ that are not on $\partial \Omega$ and Lemma \ref{lem-flux2} for the ones on $\partial\Omega,$ we get
$$
\int_{\partial \mathcal{P}\setminus \cup_i A_i}\langle yX_{u}, \nu\rangle {\rm d}s
 >-(\ell-\alpha).
$$

Then,
 $$2H\mathcal{I}(\mathcal{P})=\int_{\cup_i A_i}\langle yX_{u}, \nu\rangle {\rm d}s+\int_{\partial \mathcal{P}\setminus \cup_i A_i}\langle yX_{u}, \nu\rangle {\rm d}s
 >-(\ell-\alpha)+\alpha,$$
and therefore, for any admissible polygon, we have
$$
2\alpha <\ell+2H\mathcal{I}(\mathcal{P}).
$$

\end{proof}

\begin{proposition}\label{prop-Aempty}
Let $\Omega$ be an admissible domain such that the family $\{A_i\}$ is empty. Assume that $\kappa_{euc}(C_i)>2H/y$ and that the assigned boundary data on the arcs $\{C_i\}$ is bounded above. Then the Dirichlet Problem has a solution in $\Omega$ if and only if
$$2\beta <\ell-2H\mathcal{I}(\mathcal{P})$$
for all admissible $\mathcal{P}.$
\end{proposition}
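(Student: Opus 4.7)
The plan is to mirror the structure of the proof of Theorem \ref{teo-Bempty}, exchanging the roles of $\{A_i\}$ and $\{B_i\}$, carrying out the approximation on the admissible extension $\Omega^*$ rather than on $\Omega$, and using the strict inequality $\kappa_{euc}(C_i)>2H/y$ (in place of the bounded-subsolution-from-below argument of Theorem \ref{teo-Bempty}) to secure a uniform lower bound for the decreasing approximating sequence.

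For the necessary direction, assume the Dirichlet problem admits a solution $u$ and fix an admissible polygon $\mathcal{P}\subset\overline{\Omega}$. Since $\kappa_{euc}(C_i)>2H/y$ strictly and the sides of $\mathcal{P}$ have $\kappa_{euc}=\pm 2H/y$, no side of $\mathcal{P}$ lies on any $C_i$; combined with $\{A_i\}=\emptyset$, this forces $\partial\mathcal{P}$ to decompose into $\cup_i B_i$-arcs of total length $\beta$ and interior $\pm 2H/y$-arcs of total length $l-\beta$. Applying \eqref{eq-flux1} to $u$ on $\mathcal{P}$, Lemma \ref{lem-flux3} gives $F_u(\cup_i B_i)=-\beta$, while Lemma \ref{lem-flux1} (item 2) gives the strict bound $F_u(\partial\mathcal{P}\setminus\cup_i B_i)<l-\beta$. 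Summing yields $2H\mathcal{I}(\mathcal{P})<l-2\beta$, i.e., $2\beta<l-2H\mathcal{I}(\mathcal{P})$.

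For the sufficient direction, let $f\in C^0(\cup_i C_i)$ denote the assigned data and set $M:=\sup f<\infty$. On the admissible extension $\Omega^*$ (simply connected, admitting a bounded subsolution, with all boundary arcs of $\kappa_{euc}\geq 2H/y$ by Definition \ref{def-admsdomain}(iv)), Theorem \ref{theo_existence} produces the unique solution $u_n$ of \eqref{eq-cmc} with
\begin{equation*}
u_n=-n\ \text{on each}\ B_i^*,\qquad u_n=\max\{f,-n\}\ \text{on each}\ C_i.
\end{equation*}
The General Maximum Principle makes $\{u_n\}$ monotone decreasing, while the constant $M$ is a supersolution dominating each $u_n$ on $\partial\Omega^*$, hence throughout $\Omega^*$. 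Since $\kappa_{euc}(C_i)>2H/y$ strictly, Lemma \ref{lem-viz}(i) provides, for every compact $K\subset\mathrm{int}(\cup_i C_i)$ and every $n$ large enough that $-n<\inf_K f$, a uniform-in-$n$ lower bound for $u_n$ on a neighbourhood of $K$ in $\Omega^*$. Theorem \ref{theo-monotone} then yields a nonempty convergence set $U\subset\Omega^*$ on which $u_n\to u$ (a solution of \eqref{eq-cmc}) uniformly on compacts, together with a divergence set $V=\Omega^*\setminus U$ on which $u_n\to-\infty$ uniformly on compacts.

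The key step is to show $V=\Omega^*\setminus\overline{\Omega}$. Since $u_n=-n\to-\infty$ on each $B_i^*$, $V$ meets every crescent $W_i$; the minimality property iii) of Definition \ref{def-admsdomain} (which forbids any $\pm 2H/y$-arc inside $W_i$ from bounding a proper convex subregion of $W_i$), combined with Lemma \ref{lem-Vboundary} (which forbids interior arcs of $\partial V$ from terminating at interior points of $B_i^*$ or $C_i$), then forces every $W_i$ to lie entirely in $V$. To exclude divergence components extending into $\Omega$, suppose such a component $\mathcal{P}'$ exists; after intersecting with $\overline{\Omega}$ and isolating a connected component, one obtains an admissible polygon $\mathcal{P}''\subset\overline{\Omega}$ whose boundary decomposes into arcs of $\{B_i\}$ (total length $\beta''$) and interior $\pm 2H/y$-arcs (total length $l''-\beta''$). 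Applying the flux identity to $u_n$ on $\mathcal{P}''$ and passing to the limit using the appropriate flux-limit lemmas (Lemmas \ref{lem-flux3}--\ref{lem-flux5}) on each boundary piece, with signs dictated by the ``concave to $U$'' description of $\partial V$ from Theorem \ref{theo-monotone}, produces an inequality contradicting the hypothesised $2\beta''<l''-2H\mathcal{I}(\mathcal{P}'')$. Once $V=\Omega^*\setminus\overline{\Omega}$ is established, $u$ is a solution on $\Omega$; the barrier argument of Theorem \ref{theo_existence} together with Lemma \ref{lem-viz}(i) provides continuous attainment of $f$ on each $C_i$, while uniform divergence on each $W_i$ forces $u\to-\infty$ on each $B_i$. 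The main obstacle I expect is precisely this flux contradiction: the orientation bookkeeping required when $u_n\to-\infty$ on both sides of an interior $B_i$-arc (so no classical flux-limit lemma applies directly), together with the geometric step $W_i\subseteq V$, whose proof must leverage the minimality condition iii) as a substitute for the ambient reflection symmetry available to Spruck in $\mathbb{R}^3$ \cite{S}.
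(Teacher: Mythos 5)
Your proof follows the route the paper intends: the paper gives no details here, deferring to the analogue in \cite{FR} ``with the same adaptations used in the above result,'' and your reconstruction (approximation on $\Omega^*$ with data $-n$ on $B_i^*$, monotone decreasing sequence bounded below near the $C_i$ by Lemma \ref{lem-viz}(i) thanks to the strict curvature hypothesis, divergence set equal to $\cup_i W_i$ via condition iii) and Lemma \ref{lem-Vboundary}, and a flux contradiction for any divergence component meeting $\Omega$) is precisely that adaptation, and the necessity direction is correct as written. The one ``main obstacle'' you flag is not one: on an interior $B_i$-arc with divergence on both sides you only need the crude bound $|F_{u_n}(B_i)|\le l_{euc}(B_i)$ from Lemma \ref{lem-flux1} (no flux-limit lemma), while Lemma \ref{lem-flux4} applied from the adjacent convergence component gives $F_{u_n}\to +l_{euc}$ on the interior arcs of $\partial V$ oriented as part of $\partial\mathcal{P}$, so $2H\mathcal{I}(\mathcal{P})\ge -\beta+(l-\beta)$ and the contradiction closes.
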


The proof of this proposition in Sol$_3$ is analogous to the proof of this result in other ambient spaces (see for instance Proposition 7.2 in \cite{FR}) using the same adaptations used in the above result.

\section{More on admissible domains}

In this section we state and prove some properties of the $2H/y-$curves in order to construct admissible domains around any point of a $2H/y-$curve. These domains will be constructed (in Section 11) so that they satisfy the hypothesis of Theorem \ref{teo-Bempty} and Proposition \ref{prop-Aempty} and therefore will admit solutions to the Dirichlet Problem. Later we will use these surfaces as barriers to improve Proposition \ref{prop-Aempty} for the case of $\kappa_{euc}(C_i)\geq 2H/y$ and then prove Theorem \ref{teo-Aempty}. 

The first result about these curves presents their shape and parametrization. These curves were also exhibited in \cite{LM} in the Lie Group model of Sol$_3.$

\begin{proposition}\label{prop-eqcurv2H/y}
An arc of Euclidean curvature $2H/y$ is part of the trace of a curve $\gamma_P,$ $P=(w,z)\in \mathbb{R}^2_+,$  parametrized by $t\mapsto (x(t),y(t)),$ $t\in \rr,$ where

$$\left\lbrace\begin{array}{l}
\displaystyle{x(t)=w+\frac{ze^{1/{2H}}}{2H}S_{H}(t)} \\
\\
\displaystyle{y(t)=ze^{\frac{\sin^2(t/2)}{H}}.}
\end{array}\right.$$

The function $S_H$ is defined by
$$S_H(t)=\left\lbrace\begin{array}{l}
\displaystyle{\int_{-1}^{-\cos t} \frac{-ue^{u/2H}}{\sqrt{1-u^2}}du,}\; t\in [-\pi,0]\\ \\
\displaystyle{\int_{-1}^{-\cos t} \frac{ue^{u/2H}}{\sqrt{1-u^2}}du,}\; t\in [0,\pi],
\end{array}\right.$$ for $t\in [-\pi,\pi],$ and is extended to $\mathbb{R}$ using the relation
$S(2n\pi+t_0)=2nS(\pi)+S(t_0),$ $t_0\in [-\pi,\pi],$ $n\in \mathbb{Z}.$
\end{proposition}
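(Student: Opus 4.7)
The plan is to derive the parametrization directly by integrating the Frenet ODE system satisfied by any curve in the upper half-plane whose Euclidean curvature equals $2H/y$. After reversing orientation if necessary, parametrize the arc by Euclidean arc length $s$, write its unit tangent as $(\cos\phi(s),\sin\phi(s))$, and choose the orientation so that the signed curvature equals $-2H/y$. The Frenet relations then read
$$x'(s)=\cos\phi,\qquad y'(s)=\sin\phi,\qquad \phi'(s)=-\tfrac{2H}{y}.$$
Because $\phi'<0$, the tangent angle $\phi$ is strictly monotone and so reaches the value $\pi$ at some parameter; shifting $s$ I may assume $\phi(0)=\pi$ and I call the corresponding base point $(w,z):=(x(0),y(0))$, a point with horizontal tangent at which $y$ attains a local minimum. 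Horizontal-translation invariance of the ODE together with uniqueness of solutions then ensures that the resulting two-parameter family $(w,z)\in\mathbb R\times(0,\infty)$ exhausts every trace of a curve of Euclidean curvature $2H/y$, so it suffices to match this normalized solution with the formula for $\gamma_P$.

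Next I introduce the new parameter $t=\pi-\phi$, so $t(0)=0$ and $dt/ds=-\phi'=2H/y>0$. The chain rule converts the Frenet system into
$$\frac{dy}{dt}=\frac{y\sin t}{2H},\qquad \frac{dx}{dt}=-\frac{y\cos t}{2H}.$$
The $y$-equation is separable and, with $y(0)=z$, integrates immediately to
$$y(t)=z\exp\!\Bigl(\tfrac{1-\cos t}{2H}\Bigr)=z\,e^{\sin^2(t/2)/H},$$
using $1-\cos t=2\sin^2(t/2)$; this is the formula for $y(t)$ stated in the proposition.

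Substituting this expression into the $x$-equation reduces matters to evaluating $\int_0^t\cos\tau\,e^{-\cos\tau/(2H)}\,d\tau$. The substitution $u=-\cos\tau$ converts the integrand into $\pm\,ue^{u/(2H)}/\sqrt{1-u^2}$, but the Jacobian $d\tau=du/\sin\tau$ forces a case split: for $\tau\in(0,\pi)$ one has $\sin\tau>0$ so $d\tau=du/\sqrt{1-u^2}$ and the $+u$ form appears, whereas for $\tau\in(-\pi,0)$ one has $\sin\tau<0$ so $d\tau=-du/\sqrt{1-u^2}$ and the overall sign flips. This case distinction is exactly the two-branch structure in the definition of $S_H(t)$, and after the sign bookkeeping one obtains $x(t)=w+(ze^{1/(2H)}/(2H))\,S_H(t)$ on $(-\pi,\pi)$. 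This careful sign tracking in the substitution, which is the mechanism that produces the piecewise definition of $S_H$, is the main technical point of the whole argument.

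Finally, since the coefficients of the $t$-parametrized system depend only on $\sin t$ and $\cos t$, they are $2\pi$-periodic, so the solution extends smoothly to all $t\in\mathbb R$. The $y$-coordinate is automatically $2\pi$-periodic, while $y'(0)=0=y'(2\pi)$ and a direct comparison shows that the $x$-coordinate accumulates a constant increment per period, namely $(ze^{1/(2H)}/H)\,S_H(\pi)$. Iterating this increment $n$ times and combining with the already established formula on $[-\pi,\pi]$ yields the extension rule $S_H(2n\pi+t_0)=2nS_H(\pi)+S_H(t_0)$ stated in the proposition, thereby recovering the full definition of $\gamma_P$ on $\mathbb R$.
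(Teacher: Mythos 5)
Your proof is correct, but it takes a genuinely different route from the paper's. The paper's proof is a pure verification: it records that a parametrized curve $(x(t),y(t))$ has signed Euclidean curvature $-2H/y$ if and only if $x'y''-x''y'=-\tfrac{2H}{y}(x'^2+y'^2)^{3/2}$, and then simply asserts that the stated formulas for $\gamma_P$ satisfy this identity. You instead \emph{derive} the parametrization by integrating the Frenet system, using the tangent angle (via $t=\pi-\phi$) as the parameter; the separable $y$-equation gives $y(t)=ze^{\sin^2(t/2)/H}$, and the substitution $u=-\cos\tau$ with its sign change of $\sin\tau$ across $\tau=0$ produces exactly the two-branch definition of $S_H$. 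Your computations check out (in particular $dy/dt=y\sin t/2H$, $dx/dt=-y\cos t/2H$, and the per-period increment $2S_H(\pi)$ matching the extension rule). What your approach buys is that it explains where the formula comes from and makes the ``is part of the trace of'' claim essentially automatic: every initial condition $(x_0,y_0,\phi_0)$ is realized by some $\gamma_P$ at $t_0=\pi-\phi_0$ (with $z=y_0e^{-\sin^2(t_0/2)/H}$ and $w$ determined), so ODE uniqueness identifies any arc of curvature $2H/y$ with a piece of some $\gamma_P$; the paper's verification is shorter but leaves this exhaustiveness implicit. One small presentational caveat: rather than asserting that the maximal extension of a given arc ``reaches $\phi=\pi$'' (which presumes completeness of the flow before it is established), it is cleaner to match initial conditions directly as above.
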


\begin{proof}
A curve in $\mathbb{R}^2_+,$ parameterized by $\gamma(t)=(x(t),y(t))$ has Euclidean curvature $\kappa_{euc}=-2H/y,$ $H\neq 0,$ if and only if
\begin{equation}\label{eq_curva}
x'(t)y''(t)-x''(t)y'(t)=\frac{-2H}{y(t)}(x'(t)^2+y'(t)^2)^{3/2},
\end{equation}
which holds for $\gamma_P$ defined above.
\end{proof}

\begin{remark}
A clearer parametrization of $\gamma_P$ that sees it as a graph in the horizontal direction and does not take into account the sign of $\kappa_{euc}$ is $\gamma_P(y)=(x(y),y),$ $y\in \left(z,ze^{2/{2H}}\right),$ where 
\begin{equation}\label{eq-curva-x=x(y)}
x(y)=w\pm\frac{ze^{1/2H}}{2H}\int_{-1}^{-1+2H\ln(y/z)} \frac{-ue^{u/2H}}{\sqrt{1-u^2}}du.
\end{equation}

We name the function in the integrand by \begin{equation}\label{eq-defdag}
g_H(u) =\frac{-ue^{u/2H}}{\sqrt{1-u^2}}.
\end{equation}

\end{remark}

The shape of a $2H/y-$curve (see Figure \ref{figure1}) repeats periodically if one moves horizontally. We define some useful quantities associated to these curves.

\begin{definition}
For any $H>0,$ we define
\begin{equation} \label{eq-L}
L_H=\frac{e^{1/2H}}{2H}\int_{-1}^0g_H(u)du
\end{equation}

\begin{equation} \label{eq-M}
M_H=\frac{e^{1/2H}}{2H}\int_{-1}^1-g_H(u)du
\end{equation}

and $T=T_H$ as the number in the interval $(0,1)$ such that
\begin{equation} \label{eq-T}
\int_{-1}^{T_H}g_H(u)du=0.
\end{equation}

\label{def-T}

\end{definition}

\begin{figure}[h]
  \centering
  \includegraphics[height=5cm]{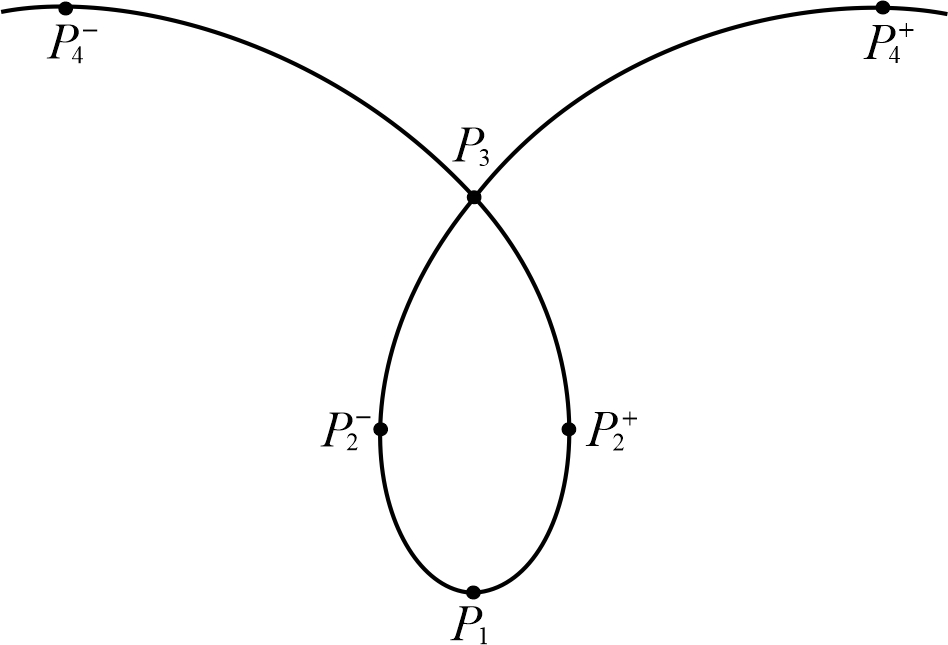}
  \caption{Curve of Euclidean curvature $2H/y.$}
    \label{figure1}
\end{figure}

We can see from Proposition \ref{prop-eqcurv2H/y} that if the base point $P=P_1$ has coordinates $P_1=(0,z),$ the other indicated points in the figure have the following coordinates:

$$ \begin{array}{lll}

P_2^+&=\gamma_P\left(-\pi/2\right)&=\left(zL_H,ze^{1/2H}\right),\\
P_2^-&=\gamma_P\left(\pi/2\right)&=\left(-zL_H,ze^{1/2H}\right),\\
P_4^{+}&=\gamma_P\left(\pi\right)&=\left(zM_H,ze^{1/H}\right),\\
P_4^{-}&=\gamma_P\left(-\pi\right)&=\left(-zM_H,ze^{1/H}\right),\\
P_3&=\gamma_P\left(T_H\right)&=\left(0, ze^{\frac{1+T_H}{2H}}\right).\\

\end{array}$$

Since the coordinates are all proportional to $z,$ we conclude that moving upwards makes the period and height of the $2H/y-$curves grow linearly. Since an Euclidean dilation centered at the origin is an hyperbolic isometry in this model, this shows that in $\mathbb{H}^2$ all the $2H/y-$curves are isometric.

\subsection{$2H/y-$curves joining aligned points}
To build the admissible domains around any point in a $2H/y-$curve that satisfy the conditions of Theorem \ref{teo-Bempty} and Proposition \ref{prop-Aempty}, we must analyse all inscribed polygons, that is, all $2H/y-$curves contained in the domain that connect the vertices of the boundary of the domain. In order to simplify this analysis we construct (Section 11) admissible domains using four vertices of a rectangle with sides parallel to the $x$ and $y$ axes.

To verify the hypotheses of Theorem \ref{teo-Bempty} and Proposition \ref{prop-Aempty} we need the next results about $2H/y-$curves connecting horizontally or vertically aligned points.

Given two horizontally aligned points $p=(-w,z)$ and $q=(w,z)$ we would like to know what are all embedded $2H/y-$curves from $p$ to $q.$ The next proposition gives sufficient conditions on the Euclidean distance $2w$ between $p$ and $q$ for the existence of at most three embedded $2H/y-$curves connecting them.

\begin{proposition}\label{prop-horizalig}
Given two horizontally aligned points $p=(-w,z)$ and $q=(w,z)$ with Euclidean distance $2w<zK(H),$ any embedded $2H/y-$curve from $p$ to $q$ is:
\begin{description}
\item[i)] symmetric about reflection on the vertical line $x=0,$
\item[ii)] either contained in the region $\{y\geq z\}$ or contained in the region $\{y\leq z\}.$
\end{description}
The constant $K=K(H)$ depends only on $H$ and is given by expression \eqref{eq-K(H)} below.

Moreover, there is exactly one such curve above the line $y=z$ and there are at most two such curves below the line $y=z.$ If $2w<2ze^{-1/2H}L_H,$ there are two curves below the line $y=z$, one which is shorter, without point of vertical tangency, and another one that has two points of vertical tangency (that is, it has the two $P_2^{-}$ and $P_2^{+}$ type of points, see Figure \ref{figure1}); in particular its $P_1$-type of point is in the line $y=ze^{-1/2H}$ or below it.\\

\end{proposition}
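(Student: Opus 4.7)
My approach is to parametrize embedded $2H/y$-arcs from $p$ to $q$ using Proposition \ref{prop-eqcurv2H/y}, and to exploit two basic symmetries of $\hh$ that preserve the family of $2H/y$-curves: the Euclidean dilation $(x,y)\mapsto(\lambda x,\lambda y)$ and the reflection $\sigma:(x,y)\mapsto(-x,y)$. Applying the dilation with $\lambda=1/z$ normalizes to $z=1$, so $p=(-w,1)$, $q=(w,1)$ and the hypothesis becomes $2w<K(H)$. Let $\gamma:[t_0,t_1]\to\hh$ be an embedded $2H/y$-arc with $\gamma(t_0)=p$, $\gamma(t_1)=q$, and write $\gamma(t)=(x(t),y(t))$. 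Since $y(t_0)=y(t_1)=1$, Rolle's theorem yields an interior $t^*$ with $y'(t^*)=0$, and such points are precisely the $P_1$-type (local $y$-minima) and $P_4$-type (local $y$-maxima) points of the underlying curve $\gamma_P$.

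The first goal is to show that, under the smallness $2w<K(H)$, the embedded arc contains \emph{exactly one} such interior critical point. In the parametrization, an arc containing two consecutive critical points of $\gamma_P$ wraps across a full half-period, and an explicit computation shows its endpoints are separated horizontally by $2z_0M_H$, where the base parameter $z_0$ is bounded below by $e^{-(1+T_H)/(2H)}$ (smaller values of $z_0$ force the arc to contain two parameter preimages of a $P_3$-type self-intersection of $\gamma_P$, violating embeddedness). Choosing $K(H)$ in \eqref{eq-K(H)} below $2e^{-(1+T_H)/(2H)}M_H$ therefore rules such arcs out. With $t^*$ unique, the sign of $y-1$ on $(t_0,t_1)$ is constant: positive if $t^*$ is $P_4$-type, negative if $P_1$-type. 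This gives Part (ii).

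For Part (i) I would establish two reflective identities for $S_H$: first, $S_H(-t)=-S_H(t)$ on $[-\pi,\pi]$, immediate from the definition; and second, $S_H(\pi+u)+S_H(\pi-u)=2S_H(\pi)$, which follows by computing $S_H'(\pi\pm u)$ via Leibniz's rule and checking that they coincide. These identities imply that each $\gamma_P$ is setwise invariant under the Euclidean reflection across the vertical line through any of its $P_1$- or $P_4$-type points. Applying this reflection with axis $\{x=x(t^*)\}$ sends $\gamma$ to itself and exchanges the two halves of the arc; since it must swap $p$ and $q$, the axis is forced to be $\{x=0\}$, which is Part (i).

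For the counting, each embedded arc is determined by the type of its interior horizontal tangent together with the base parameter $z_0$ of $\gamma_P$. Writing $w$ as an integral of $g_H$ in $z_0$ via \eqref{eq-curva-x=x(y)}, I would verify by sign analysis that $z_0\mapsto w$ is a monotone bijection in each of three regimes: \emph{above} (interior $P_4$-tangent, $z_0\in(e^{-1/H},1)$), \emph{short below} (interior $P_1$-tangent, no $P_2^\pm$ crossed, $z_0\in(e^{-1/2H},1)$), and \emph{long below} (interior $P_1$-tangent with both $P_2^\pm$ crossed, and the arc stopping short of the $P_3$-type self-intersection). The transition $z_0=e^{-1/2H}$ corresponds to the $P_2^\pm$ points landing on $y=1$ at $x=\pm e^{-1/2H}L_H$, which identifies the threshold $2w<2ze^{-1/2H}L_H$ for the long-below regime and forces the corresponding $P_1$-type point to lie at or below $y=ze^{-1/2H}$. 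The main obstacle is the embeddedness argument in the second paragraph: isolating $K(H)$ in \eqref{eq-K(H)} small enough to exclude all wrapping arcs while leaving the three regimes nonempty in $0<2w<zK(H)$.
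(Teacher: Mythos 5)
Your reduction to the parametrization of Proposition \ref{prop-eqcurv2H/y}, the Rolle argument locating an interior $P_1$- or $P_4$-type point, and the reflection identities for $S_H$ giving part (i) are all in the spirit of the paper's proof. But the central step — excluding embedded arcs that cross the line $y=z$ — has a genuine gap. You claim that an embedded arc containing two consecutive critical points has endpoints separated horizontally by $2z_0M_H$, and you propose to take $K(H)$ below $2e^{-(1+T_H)/2H}M_H$. That computation is only correct for the arc whose interior contains \emph{exactly} the pair $P_1,P_4$ (endpoints one full period apart). The extremal bad arc is the one spanning two consecutive $P_1$-dips with the intermediate $P_4$-hump: its endpoints are the first and second intersections of $\gamma_P$ with $\{y=z,\ x\ge 0\}$, located at $x=w_1(t)$ and $x=2z_0M_H-w_1(t)$ with $w_1(t)=\tfrac{z_0e^{1/2H}}{2H}\int_{-1}^{-1+t}g_H(u)\,du>0$, so they are only $2z_0M_H-2w_1(t)$ apart. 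The paper's $K(H)$ in \eqref{eq-K(H)} is precisely the minimum over $t\in(0,1+T_H)$ of $\bar d(t)=2e^{-t/2H}M_H-2w_1(t)/z$, and this minimum is attained at an interior $t_0$, hence is \emph{strictly smaller} than $\bar d(1+T_H)=2e^{-(1+T_H)/2H}M_H$. Consequently, for any $2w$ with $K(H)<2w/z<2e^{-(1+T_H)/2H}M_H$ there is, by the intermediate value theorem, a $t$ with $\bar d(t)=2w/z$, i.e.\ an embedded arc joining $p$ to $q$ that is contained in neither half-plane. So with your threshold, part (ii) is false; the three-critical-point arcs are exactly what forces the specific form of $K(H)$, and your argument does not address them.

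A secondary problem is the counting step. The map $t\mapsto \bar l(t)=2w_1(t)/z$ governing the "below" arcs is \emph{not} monotone on the regime $z_0\in(e^{-1/2H},1)$ (equivalently $t\in(0,1)$): it is $e^{-t/2H}$ times a function maximized at $t=1$, so its maximum sits at an interior point $t_0\in(0,1)$ and it decreases on $(t_0,1)$. The correct statement, which the paper proves by a sign analysis of $\bar l'$, is unimodality on $[0,1+T_H]$; this still yields "at most two below," and the comparison $\bar l(1)=2e^{-1/2H}L_H$ yields the dichotomy between the no-vertical-tangency and two-vertical-tangency solutions. Similarly, the separation function for the "above" arcs is not monotone in $z_0$ near $z_0=1$ (its derivative blows up with the wrong sign as $t\to 0^+$); the paper instead gets uniqueness of the above arc directly from the maximum principle, which is the cleaner route.
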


\begin{proof}
All possible $2H/y-$curves connecting $p$ to $q$ are parts of a curve $\gamma_P$ described in Proposition \ref{prop-eqcurv2H/y}.
Hence their possible shapes are obtained intersecting $\gamma_P$ with horizontal lines.

To look at all possible intersections we move downwards a horizontal line and look at parts of $\gamma_P$ that join two intersection points (horizontally aligned) and are embedded. We first find curves above the horizontal line connecting the two intersection points which exist if $$2w=d_{euc}(p,q)\leq 2ze^{\frac{-1}{2H}}(M_H+L_H),$$ where the equality corresponds to the case where $p$ is a $P_2^{-}$-point and $q$ is a $P_2^+$-point from the next loop of $\gamma_P$.

If $p$ is a point in the arc $\arc{P_3P_2^+P_1}$ of a loop of a curve $\gamma_P$ and $q$ is a point in the arc $\arc{P_1P_2^-P_3}$ of the next loop of $\gamma_P,$ the part of $\gamma_P$ joining the two points is an embedded arc but is not contained in $\{y\geq z\}$ nor in $\{y\leq z\}.$ We want to avoid this kind of curves.  

In order to do that we fix the line $y=z$ and look at the family of all $2H/y$-curves that intersects this line and have the $P_1$-type point below it. We define $d(t)=w_2(t)-w_1(t),$ for any $t\in [0, 1+T_H],$  the Euclidean distance between $(w_1(t),z)$ and $(w_2(t),z),$ the first and second intersections of $\gamma_{(0, ze^{-t/2H})}$ with the half line $\{(x,y)\,|\,y=z \text{ and }x\geq 0\}.$ (Observe that since translations along the $x$-direction are isometries, we can restrict ourselves to the family of $2H/y$-curves whose $P_1$-type point is of the form $(0, ze^{-t/2H})$).

We compute $w_1$ and $w_2$ and then look for the minimum value of $d.$
On one hand, $w_1(t)$ is the $x$-coordinate of $\gamma_{(0, ze^{-t/2H})}$ when the $y$-coordinate is $z,$ then $$w_1(t)=\frac{ze^{-t/2H}e^{1/2H}}{2H}\int_{-1}^{-1+t} g_H(u) du.$$
On the other hand, $w_2(t)$ is the distance between $(0, ze^{-t/2H})$ and the next $P_1$-point of $\gamma_{(0, ze^{-t/2H})},$ which is $2M_Hze^{-t/2H}$, minus $w_1(t),$ hence
$$w_2(t)=2M_Hze^{-t/2H}-w_1(t)$$ and then
$$\begin{array}{ll}
d(t)&\displaystyle{=2M_Hze^{-t/2H}-2w_1(t)}\\ \\
&=\displaystyle{2ze^{-t/2H}\left(M_H-\frac{e^{1/2H}}{2H}\int_{-1}^{-1+t} g_H(u) du\right)}\\ \\
&=\displaystyle{z\bar{d}(t).}
\end{array}$$

We notice that $\bar{d}$ is a continuous function in $[0,1+T_H]$ that depends only on $H$ and we look for the minimum value of $\bar{d}.$ We claim that there is a point $t_0\in (0,1+T_H)$ such that $\bar{d}$ is decreasing in $[0,t_0]$ and increasing in $[t_0,1+T_H].$ Therefore, $\bar{d}(t_0)$ is the minimum value of $\bar{d}.$

Differentiating $\bar{d},$ we have
$$\bar{d}'(t)=2e^{-t/2H}\left(\frac{-M_H}{2H}+\frac{e^{1/2H}}{(2H)^2}\int_{-1}^{-1+t} g_H(u) du-\frac{e^{1/2H}}{2H}g_H(-1+t)\right),$$
which has the same sign as 
$$f(t)=\frac{-M_H}{2H}+\frac{e^{1/2H}}{(2H)^2}\int_{-1}^{-1+t} g_H(u) du-\frac{e^{1/2H}}{2H}g_H(-1+t).$$

Differentiating $f$ we have
$$f'(t)=\displaystyle{\frac{e^{{t}/{2H}}}{2H(1-(-1+t)^2)^{3/2}}>0.}$$

Moreover,
$$\lim_{t\rightarrow0^+}f(t)=-\infty$$
and $f$ can be extended continuously to $[0,2],$ and then
$$\lim_{t\rightarrow 2^-} f(t)=\frac{-M_H}{2H}+\frac{e^{1/2H}}{(2H)^2}\int_{-1}^{1} g_H(u) du-\lim_{t\rightarrow 1^-}\frac{e^{1/2H}}{2H}g_H(t)=+\infty.$$

Hence $f$ takes the value zero in $(0,2)$ and therefore $\bar{d}'$ too. It is easy to see from the geometric definition of $\bar{d},$ which also can be extended to $[0,2],$ that $t_0\in (0,1+T_H).$

Therefore $t_0$ is the unique solution of $f(t_0)=0,$ that is,

$$\frac{M_H}{2H}=\frac{e^{1/2H}}{(2H)^2}\int_{-1}^{-1+t_0} g_H(u) du-\frac{e^{1/2H}}{2H}g_H(-1+t_0).$$

Let us define
\begin{equation}\label{eq-K(H)}
K(H)=\bar{d}(t_0)=2e^{-t_0/2H}\left(M_H-\frac{e^{1/2H}}{2H}\int_{-1}^{-1+t_0} g_H(u) du\right)
\end{equation}
for $t_0$ the solution above.

Therefore, if $p$ and $q$ are closer than $zK(H),$ any $2H/y-$curve connecting them is either above the line $y=z$ as described in the beginning of this proof or below it. 

As a consequence of the Maximum Principle, we can show that the curve above is unique. In fact, let us denote by $p$ and $q$ two horizontally aligned points whose distance is less than $zK(H)$. Then any $2H/y-$curve joining them that is above the line $y=z$ does not contain any point with vertical tangency, its interior is contained in the open vertical slab bounded by the vertical lines passing through $p$ and $q$, and it is symmetric with respect to the vertical line passing through the midpoint between $p$ and $q$. In particular, if there were two such curve, say $c_1$ and $c_2$, they would be disjoint apart from the endpoints and one would be above the other (see Figure \ref{curves1}). Hence moving $c_1$ farway upwards and translating it back, we would find a first point of contact between $c_1$ and $c_2$, but since both of them satisfy that at each point their Euclidean geodesic curvature is $2H/y$ we would get a contradiction with the Maximum Princeple, because the curve $c_1$ was below $c_2$ and therefore its $y$-point is smaller than the corresponding $y$-point of $c_2$, making the geodesic curvature of $c_2$ be less than the geodesic curvature of $c_1$. Therefore, there is only one curve above.

\begin{figure}[h]
\centering
\includegraphics[height=1.5cm]{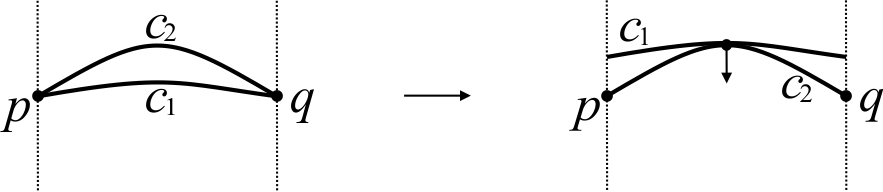}
\caption{Horizontally aligned points.}
\label{curves1}
\end{figure}

The case of curves below $y=z$ is different, we might not have uniqueness. By looking at the intersections of $\gamma_P$ with horizontal lines, it seems that for $p$ and $q$ sufficiently close there are usually two curves connecting $p$ to $q$ below $y=z.$ We will show this is always the case.

To demonstrate that, we again analyse the distance between two intersections of a line $y=z$ and a family of curves $\{\gamma_{(0, ze^{-t/2H})}\}_{t\in (0, 1+T_H)}.$ The difference is that now we are interested in the distance $l(t)$ between the first intersection of $\gamma_{(0, ze^{-t/2H})}$ with $\{y=z, x\geq0\}$ and the last intersection of $\gamma_{(0, ze^{-t/2H})}$ with  $\{y=z, x\leq 0\}.$

As done before, we compute \begin{equation}\label{eq-defl}
l(t)=2w_1(t)=\frac{2ze^{-t/2H}e^{1/2H}}{2H}\int_{-1}^{-1+t} g_H(u) du=z\bar{l}(t).
\end{equation}

Again differentiating and in this case knowing that $l(0)=l(1+T_H)=0,$ we find a value $t_0\in (0,1+T_H)$ such that $\bar{l}$ is increasing in $[0,t_0]$ and decreasing in $[t_0, 1+T_H],$ with a maximum value $\bar{l}(t_0),$ and for each distance between $0$ and $z\bar{l}(t_0),$ there are two $2H/y-$curves joining a pair of points that are this distance apart.

We remark that $\bar{l}$ is $e^{-t/2H}$ multiplied by a function that attains its maximum at $t=1;$ 
hence, the maximum value of $\bar{l}$ occurs in $t_0\in (0,1)$ and $\bar{l}$ is decreasing in $(1,1+T_H).$ Therefore, given $2w \in (0, z\bar{l}(1)),$ the curves that join two points $2w$ apart are one with $P_1$-point above $y=ze^{-1/2H}$ and the other with $P_1$-point below $y=ze^{-1/2H}.$ 
Since $$\bar{l}(1)=2e^{-1/2H}L_H,$$ the proof is concluded.
\end{proof}

The next result is about a domain $\Omega$ with only two horizontally aligned vertices $p$ and $q$ contained in the line $y=z.$ 

Let us call $\gamma^+$ the curve that joins $p$ and $q$ contained above the line $y=z$ and by $\gamma^-$ the curve below joining $p$ and $q$ without point of vertical tangency. If $\ell_{euc}(\gamma^{\pm})$ stands for the Euclidean length of $\gamma^{\pm},$ observe that 
$$
\ell_{euc}(\gamma^+)<\ell_{euc}(\gamma^-)+2H\mathcal{I}(\Omega),
$$
since $\ell_{euc}(\gamma^+)<\ell_{euc}(\gamma^-),$ because $\gamma^-$ has greater Euclidean curvature than $\gamma^+.$ Moreover, we have the following lemma.

\begin{lemma}\label{lemma-horizdesig}
In the context described above, let $\Omega$ be the domain bounded below by $\gamma^-$ and above by $\gamma^+.$ Then, if $p=(-w,z)$ and $q=(w,z)$ are sufficiently close, it holds $$\ell_{euc}(\gamma^-)<\ell_{euc}(\gamma^+)+2H\mathcal{I}(\Omega).$$
\end{lemma}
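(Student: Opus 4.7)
The plan is to perform an asymptotic analysis in the small parameter $w$, the half-distance between $p$ and $q$. First, I would describe both $\gamma^+$ and $\gamma^-$ via the ODE system that a $2H/y$-curve satisfies: with $\theta$ denoting the tangent angle to the horizontal, one has $d\theta/ds = 2H/y$, $dx/ds = \cos\theta$, $dy/ds = \sin\theta$, from which $y(\theta) = C\,e^{-\cos\theta/(2H)}$ along the curve. For $\gamma^-$, the angle $\theta$ runs over $[-\theta_0,\theta_0]$ centered at the lowest point ($\theta=0$), and imposing $y=z$ at the endpoints gives $y^-(\theta) = z\,e^{(\cos\theta_0-\cos\theta)/(2H)}$. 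For $\gamma^+$ the angle runs over $[\pi-\theta_0^+,\pi+\theta_0^+]$ centered at the topmost point ($\theta=\pi$), so writing $\phi:=\theta-\pi$ yields $y^+(\phi)=z\,e^{(\cos\phi-\cos\theta_0^+)/(2H)}$. The angles $\theta_0, \theta_0^+ > 0$ are determined by the horizontal distance condition $2w=\int_{\gamma^\pm}\cos\theta\,ds$. This is essentially a repackaging of the parametrization of Proposition \ref{prop-eqcurv2H/y}.

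Next, I would Taylor expand everything in the dimensionless small parameter $a:=2Hw/z$. The distance conditions yield $\theta_0 = a + \frac{H+1}{6H}a^3 + O(a^5)$ and $\theta_0^+ = a + \frac{H-1}{6H}a^3 + O(a^5)$: note the $a^3$ corrections have opposite signs. Substituting these into the arc-length formula $l(\gamma)=\int \frac{y}{2H}\,d\theta$ and expanding, the coefficients conspire so that
$$l(\gamma^-) \;=\; l(\gamma^+) \;=\; \frac{za}{H} + \frac{za^3}{6H} + O(a^5);$$
in particular the difference $l(\gamma^-)-l(\gamma^+)$ cancels through order $a^3$ and is therefore of order $w^5$. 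Heuristically this reflects the fact that, to leading order, both $\gamma^+$ and $\gamma^-$ are arcs of the Euclidean circle of radius $z/(2H)$ through $p$ and $q$ (since $\kappa_{euc}=2H/y\approx 2H/z$ near the line $y=z$), and two such arcs on opposite sides of the chord are Euclidean reflections of each other across $y=z$, hence have identical length.

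For the area term, the maximal vertical deviation of $\gamma^\pm$ from the line $y=z$ is $\epsilon^\pm = Hw^2/z + O(w^4)$, which follows from $y^-_{\min} = ze^{(\cos\theta_0-1)/(2H)}$ and $y^+_{\max}=ze^{(1-\cos\theta_0^+)/(2H)}$. Hence the Euclidean area of $\Omega$ is $\frac{4w}{3}(\epsilon^++\epsilon^-) + O(w^5) = \frac{8Hw^3}{3z} + O(w^5)$, and $\mathcal{I}(\Omega)=\int_\Omega y^{-1}\,da = \frac{8Hw^3}{3z^2} + O(w^5)$. Thus $2H\mathcal{I}(\Omega)=\frac{16H^2 w^3}{3z^2} + O(w^5)$ is of exact order $w^3$, dominating $l(\gamma^-)-l(\gamma^+)=O(w^5)$ for all sufficiently small $w$, and the strict inequality of the lemma follows. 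The main technical hurdle is verifying the $a^3$-cancellation in the arc-length expansion; once it is checked, the inequality is just a comparison of leading orders in $w$.
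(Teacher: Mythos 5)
Your proposal is correct, and I verified the computation on which it hinges: with $a=2Hw/z$ one indeed finds $\theta_0=a+\frac{H+1}{6H}a^3+O(a^5)$ and $\theta_0^+=a+\frac{H-1}{6H}a^3+O(a^5)$, and substituting into $l=\frac{1}{2H}\int y\,d\theta$ both lengths expand as $2w+\frac{4H^2w^3}{3z^2}+O(w^5)$, so $l(\gamma^-)-l(\gamma^+)=O(w^5)$ while $2H\mathcal{I}(\Omega)=\frac{16H^2}{3z^2}w^3+O(w^5)$; the strict inequality follows for $w/z$ small. Two minor remarks: the $a^3$-corrections to $\theta_0$ and $\theta_0^+$ have opposite signs only for $H<1$ (what actually matters is that $\theta_0-\theta_0^+=\frac{a^3}{3H}>0$), and a fully rigorous write-up must control the $O(a^5)$ remainders uniformly, which is routine since everything is analytic in $(\theta,\theta_0)$. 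This is, however, a genuinely different route from the paper's. The paper proves the stronger, non-asymptotic inequality $l(\gamma^-)<d_{euc}(p,q)+2H\mathcal{I}(\Omega^-)$, where $\Omega^-$ is only the lower half of $\Omega$, bounded by $\gamma^-$ and the chord $\overline{pq}$; this implies the lemma because $d_{euc}(p,q)\le l(\gamma^+)$ and $\Omega^-\subset\Omega$. It does so by writing $l(\gamma^-)-d_{euc}(p,q)$ and $2H\mathcal{I}(\Omega^-)$ as explicit one-dimensional integrals via the parametrization $x=x(y)$ and comparing integrands pointwise, which yields an explicit sufficient smallness condition (namely $a+e^{a/2H}<2$, where this $a$ is the paper's parameter fixing the height $ze^{-a/2H}$ of the $P_1$-point of $\gamma^-$, not your $a$). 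So the paper's argument buys an elementary, quantitative proof with no Taylor-remainder bookkeeping, while yours buys insight into why the inequality holds with so much room to spare: the length discrepancy is two orders in $w$ below the area term. Note also that your careful cancellation at order $w^3$ is more than is needed — the cruder bound $l(\gamma^+)\ge 2w$ leaves $l(\gamma^-)-2w=\frac{4H^2w^3}{3z^2}+O(w^5)$, which is still dominated by $2H\mathcal{I}(\Omega^-)\sim\frac{8H^2w^3}{3z^2}$, and that coarser comparison is in essence what the paper exploits.
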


\begin{proof}
This inequality $\ell_{euc}(\gamma^-)<\ell_{euc}(\gamma^+)+2H\mathcal{I}(\Omega)$ is a consequence of the more general one
\begin{equation}\label{eq-desiglema}
\ell_{euc}(\gamma^-)<d_{euc}(p,q)+2H\mathcal{I}(\Omega^-),
\end{equation} where $\Omega^-$ is the region bounded below by $\gamma^-$ and above by the straight line $\overline{pq}.$ We will show that this inequality holds if the base point $P_1=(0,ze^{-a/2H})$ of $\gamma^-$ is such that $a+e^{a/2H}<2,$ the reason why $p$ and $q$ are required to be sufficiently close.

In order to do that, we compute and estimate $\ell_{euc}(\gamma^-)-d_{euc}(p,q)$ and $\mathcal{I}(\Omega^-).$ We have
$$\ell_{euc}(\gamma^-)=2\frac{ze^{-a/2H}e^{1/2H}}{2H}\int_{-1}^{-1+a} \frac{e^{u/2H}}{\sqrt{1-u^2}}du$$
and $$d_{euc}(p,q)=2w=2\frac{ze^{-a/2H}e^{1/2H}}{2H}\int_{-1}^{-1+a} g_H(u)du,$$
so, using that the exponential function is increasing, we get
$$\begin{array}{ccl}
\ell_{euc}(\gamma^-)-d_{euc}(p,q)&=&\displaystyle{\frac{ze^{-a/2H}e^{1/2H}}{H}\int_{-1}^{-1+a} e^{u/2H}\sqrt{\frac{1+u}{1-u}}du}\\ \\
&\leq& \displaystyle{\frac{z}{H}\int_{-1}^{-1+a} \sqrt{\frac{1+u}{1-u}}du} = \displaystyle{\frac{z}{H}\int_{0}^{a} \sqrt{\frac{t}{2-t}}dt}.
\end{array}$$

On the other hand, for $x_{\gamma^-}$ the $x$-coordinate of $\gamma^-$,
$$\begin{array}{ccl}
2H\mathcal{I}(\Omega^-)&=&\displaystyle{4H\int_{ze^{-a/2H}}^{z}\frac{x_{\gamma^{-}}(y)}{y}dy}=\displaystyle{\frac{ze^{-a/2H}e^{1/2H}}{H}\int_0^a \int_{-1}^{-1+t}
g_H(u)du dt}\\\\
&\ge& \displaystyle{\frac{ze^{-a/2H}}{H}\int_0^a \int_{-1}^{-1+t}
\frac{-u}{\sqrt{1-u^2}}du dt}=\displaystyle{\frac{ze^{-a/2H}}{H}\int_0^a \sqrt{2t-t^2} dt},
\end{array}$$
where in the second equality we did the change of variables $y=ze^{t-a/2H}$ and used \eqref{eq-curva-x=x(y)},  and in the inequality we used again that the exponential function is increasing.

Hence \eqref{eq-desiglema} is implied by $$\frac{ze^{-a/2H}}{H}\int_0^a \sqrt{2t-t^2} dt \geq \frac{z}{H}\int_{0}^{a} \sqrt{\frac{t}{2-t}}dt,$$
which is a consequence of $$\sqrt{2t-t^2}  \geq e^{a/2H} \sqrt{\frac{t}{2-t}}$$
that holds for $t\in (0,a),$ if $a$ satisfies $a +e^{a/2H}<2.$ 
\end{proof}

The constructions of domains around points in $2H/y-$curves also need some properties of $2H/y-$curves that join vertically aligned points.

\begin{proposition}\label{prop-vertalig}
Given two vertically aligned points $p=(0,z)$ and $q=(0,t)$ with Euclidean distance $0<t-z<z(e^{T_H/2H}-1),$ 
there are two $2H/y-$curves $\gamma_I$ and $-\gamma_I$ joining $p$ and $q$ contained in the slab $\{z\leq y\leq t\};$ one of them is contained in $\{x\geq 0\}$ and the other one is the reflection of the first about $x=0.$ Moreover, any other $2H/y-$curve with endpoints $p$ and $q$ intersects the line $y=ze^{-1/2H},$ intersects both half planes $\{x<0\}$ and $\{x>0\},$ and has length greater than the length of $\gamma_I.$ 
\end{proposition}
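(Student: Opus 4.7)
The plan is to classify every $2H/y$-curve through $p$ and $q$ by means of the parametrization $\gamma_{(w',z')}$ of Proposition \ref{prop-eqcurv2H/y}. Any such curve has base point $(w',z')$ with $z'\le z$, since $z'$ is the minimum $y$-value on the curve; and the simultaneous conditions $y(\sigma_i)\in\{z,t\}$ and $x(\sigma_i)=0$ at the parameters $\sigma_1,\sigma_2$ where $p$ and $q$ are met reduce, within one period $[-\pi,\pi]$, to $\sigma_i=\pm a_i$ with
$$a_1=2\arcsin\sqrt{H\ln(z/z')},\qquad a_2=2\arcsin\sqrt{H\ln(t/z')},$$
together with $S_H(\sigma_1)=S_H(\sigma_2)$. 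Since $S_H$ is odd, this splits into \emph{Case A} ($\sigma_1,\sigma_2$ of the same sign, so the arc is $y$-monotone inside the slab) and \emph{Case B} ($\sigma_i$ of opposite signs, so the arc passes through the base $s=0$). Any multi-period configuration forces $\pm S_H(a_1)\pm S_H(a_2)=2nS_H(\pi)$ with $|n|\geq1$; such arcs visit the base several times and thus the Case B dipping estimate below applies \emph{a fortiori}.

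For Case A the shape of $S_H$ on $[0,\pi]$ (decreasing from $0$ to its minimum $S_H(\pi/2)<0$ on $[0,\pi/2]$, then increasing through its second zero at $\arccos(-T_H)$ up to $S_H(\pi)>0$) forces $a_1\in[0,\pi/2]$ and $a_2\in[\pi/2,\arccos(-T_H)]$. Defining $a_2=a_2(a_1)$ implicitly by $S_H(a_1)=S_H(a_2)$, the function $F(a_1):=\sin^2(a_2(a_1)/2)-\sin^2(a_1/2)$ is continuous and strictly decreasing on $[0,\pi/2]$, taking value $(1+T_H)/2$ at $a_1=0$ and $0$ at $a_1=\pi/2$. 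The remaining height condition demands $F(a_1)=H\ln(t/z)$; since $0<H\ln(t/z)<T_H/2<(1+T_H)/2$, the intermediate value theorem gives a unique $a_1$, hence a unique base $z'\in(ze^{-1/(2H)},z)$ and a unique embedded arc; applying the $x\mapsto-x$ symmetry furnishes the other, producing the pair $\gamma_I,-\gamma_I$ with $\gamma_I\subset\{x\geq0\}$. For Case B the sign compatibility $S_H(a_1)=-S_H(a_2)$ together with $a_1\leq a_2$ forces $a_1\in(0,\arccos(-T_H))$ and $a_2\in(\arccos(-T_H),\pi]$; combined with $\sin^2(a_2/2)-\sin^2(a_1/2)=H\ln(t/z)<T_H/2$, one obtains $\sin^2(a_1/2)>1/2$, hence $a_1>\pi/2$ and $z'=ze^{-\sin^2(a_1/2)/H}<ze^{-1/(2H)}$, so the arc dips below $y=ze^{-1/(2H)}$. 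That the arc meets both half-planes follows by evaluating $x(\pi/2)=\frac{z'e^{1/(2H)}}{2H}(S_H(\pi/2)-S_H(a_1))$: since $S_H(\pi/2)<S_H(a_1)<0$, $x(\pi/2)$ has sign opposite to the base coordinate $w'=-\frac{z'e^{1/(2H)}}{2H}S_H(a_1)$.

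For the length inequality, the parametric formulas give $|\gamma'(s)|=y(s)/(2H)$, so setting
$$\ell(z',y):=\frac{z'}{2H}\int_0^{s(y,z')}e^{(1-\cos s)/(2H)}\,ds,\qquad s(y,z')=2\arcsin\sqrt{H\ln(y/z')},$$
(the Euclidean arclength along the first ascending branch of a $2H/y$-curve with base at height $z'$ up to height $y$), one has $L(\gamma_I)=\ell(z'_A,t)-\ell(z'_A,z)$ and, for the Case B arc, $L_B=\ell(z'_B,z)+\ell(z'_B,t)$. A direct differentiation shows $\partial_{z'}\ell(z',y)<0$ on the relevant range, so from $z'_B<z'_A$ it follows that $\ell(z'_B,t)>\ell(z'_A,t)$ and hence
$$L_B-L(\gamma_I)=\ell(z'_B,z)+\ell(z'_A,z)+[\ell(z'_B,t)-\ell(z'_A,t)]>0.$$
The main technical obstacle is verifying the monotonicity of $\ell(z',y)$ in $z'$ across the entire admissible range: the derivative is the difference of two positive terms that each become singular at the extreme base heights, and either a careful majorization or an equivalent geometric argument comparing two ascending branches from different bases up to the common height $y$ is needed to conclude negativity throughout.
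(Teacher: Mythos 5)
Your reduction of the classification to the explicit parametrization is essentially sound and runs parallel to the paper's geometric Type I/II/III analysis: the existence and uniqueness of the slab-contained pair $\gamma_I,-\gamma_I$ (via the monotonicity of $F$ and the intermediate value theorem, where the paper instead uses a sliding/maximum-principle argument), and the fact that every other arc has base height $z'<ze^{-1/2H}$ and meets both half-planes, are correctly derived, although the multi-period case is only waved at. The genuine gap is in the last and hardest assertion, the length inequality, and the route you propose cannot be completed as stated. You reduce it to $\partial_{z'}\ell(z',y)<0$, which you flag as unverified; in fact it fails. Since $|\gamma'(s)|=y(s)/2H$ and $\partial_{z'}s(y,z')=-2H/(z'\sin s)$, one computes that $\partial_{z'}\ell(z',y)=-\phi(s_1)/(2H\sin s_1)$ with $s_1=s(y,z')$ and $\phi(s_1)=2He^{(1-\cos s_1)/2H}-\sin s_1\int_0^{s_1}e^{(1-\cos s)/2H}\,ds$. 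Then $\phi'(s_1)=-\cos s_1\int_0^{s_1}e^{(1-\cos s)/2H}\,ds$, so $\phi$ attains its minimum at $s_1=\pi/2$, where $\phi(\pi/2)=2He^{1/2H}-\int_0^1 e^{v/2H}\bigl(v(2-v)\bigr)^{-1/2}dv=-8H^3e^{1/2H}\bigl(1+o(1)\bigr)$ as $H\to0$. Hence for small $H$ there is an open range of base heights, lying inside the interval $[z'_B,z'_A]$ you need to sweep when $t$ is close to $z$, on which $\ell(\cdot,t)$ is \emph{increasing}; the sign of $\ell(z'_B,t)-\ell(z'_A,t)$ is then not controlled, and your final display does not close. (For $H>\pi/4$ the crude bound $\int_0^{\pi/2}e^{(1-\cos s)/2H}ds<\tfrac{\pi}{2}e^{1/2H}$ does give $\phi>0$ everywhere, so the monotonicity is salvageable only in that regime.)

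The paper avoids all such integral estimates for this step. It first shows, by sliding $\gamma_{III}$ horizontally and invoking the maximum principle, that any non-slab arc must lie in the complement of the open convex region $O_I$ bounded by $\gamma_I\cup(-\gamma_I)$; since the nearest-point projection onto the convex set $\overline{O_I}$ is distance non-increasing and the projection of $\gamma_{III}$ covers all of $\gamma_I$ (plus part of $-\gamma_I$), the length of $\gamma_{III}$ exceeds that of $\gamma_I$. If you wish to stay within your analytic framework you would need a different decomposition of $L_B-l(\gamma_I)$ than branchwise comparison at fixed target height, since that comparison is genuinely non-monotone in the base height.
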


The fact that other curves intersect the line $y=ze^{-1/2H}$ is important because it allows the construction of domains that do not contain these curves in its interior and, therefore, we do not need to care about their length.

\begin{proof}
The proof follows the same initial steps as the one for horizontally aligned points. 
Since the reflection about a vertical line is an isometry, any curve considered has its reflected correspondent, which we omit for shortness.
Intercepting a $2H/y-$curve with a vertical line, we realize that there are three possible types of curves joining points $p$ and $q$ vertically aligned:
\begin{description}
\item Type I: Occurs when $p$ is between $P_1$ and $P_2^+$ and $q$ is between $P_2^+$ and $P_3$ and looks like the figure of letter D without the vertical segment. This is the only type of curve contained in the horizontal slab determined by the two horizontal lines passing through $p$ and $q.$
\item Type II: Occurs when $p$ is between $P_1$ and $P_2^-$ and $q$ is between $P_3$ and $P_4^-.$ 
\item Type III:  Occurs when $p$ is between $P_2^-$ and $P_3$ and $q$ is between $P_3$ and $P_4^-.$ 
\end{description}

Notice that if $p$ and $q$ are endpoints of a curve of Type II, then its $P_1$-point, $P_1=(x_1,y_1),$
has the following relations with the coordinates of $p$ and $q:$
$$y_1<z<y_1e^{1/2H} \text{ (because $p$ is between $P_1$ and $P_2^-$) and} $$
$$y_1e^{(1+T_H)/2H}<t<y_1e^{2/2H}\text{ (because $q$ is between $P_3$ and $P_4^-).$}$$

Hence
$z<y_1e^{1/2H}<te^{-T_H/2H},$ implying $ze^{T_H/2H}<t$ and
$t-z>z(e^{T_H/2H}-1).$ 
We may conclude that if $t-z<z(e^{T_H/2H}-1)$ then no curve of Type II from $p=(0,z)$ to $q=(0,t)$  exists. 

Notice also that a curve of Type III always intersects the line $y=ze^{-1/2H}$ because $p$ is above its $P_2^+$-point and, therefore, its $P_1$-point, $P_1=(x_1,y_1),$ is such that $y_1e^{1/2H}<z.$

Having this observed we prove that the curves of Type I exist and are unique in each side of the vertical line.
The existence is a consequence of $t-z<z(e^{T_H/2H}-1)<z(e^{(1+T_H)/2H}-1)=d(P_1,P_3),$ for $P_1=(0,z).$
The uniqueness is a consequence of the Maximum Principle. In fact, if there were two $2H/y-$curves from $p$ to $q$ in the same side of the line $x=0,$ by moving horizontally one of them we would find a last interior contact point, which would be a tangency point. From the Maximum Principle, the translation of one curve would coincide with the other. Since their endpoints are the same, they have to be the same.

It remains to show that a curve of Type III, denoted by $\gamma_{III},$ from $p$ to $q$ is longer than  a $2H/y-$curve of Type I from $p$ to $q.$ Let us assume without loss of generality that $\gamma_I$ is contained in $\{x\geq 0\}$ and $\gamma_{III}$ contains points with $y$-coordinate greater than $z$ in $\{x\geq 0\}.$ Hence $\gamma_{III}$ is contained in the complement of the open region $O_I$ bounded by $\gamma_I$ and $-\gamma_I,$ the reflection of $\gamma_I$ about the vertical line $x=0.$ In fact, if it were not so, there would be a region in the slab $\{z\leq y\leq t\}$ bounded on the left by an arc of $\gamma_{III}$ and on the right by an arc of $\gamma_I,$ both with mean curvature vectors pointing to the left; then moving $\gamma_{III}$ to the right there would be a last interior contact point contradicting the Maximum Principle.
Therefore, since $\gamma_{III}$ is outside the convex domain $O_I,$ it is longer than its projection on $O_I,$ defined by $\pi(v)=\{u\in O_I| d(u,v)\leq d(u,w), \, \forall w\in O_I\},$ which is $\gamma_I$ together with a part of $-\gamma_I,$ in particular, it is longer than $\gamma_I.$

\end{proof} 

The last result of this subsection is about domains bounded by a $2H/y-$curve of Type I and its reflection.

\begin{lemma}\label{lem-vertalign}
Let $p=(0,z)$ and $q=(0,t)$ be two vertically aligned points. If $\gamma_I$ and $-\gamma_I$ are the $2H/y-$curves joining $p$ and $q$ of Type I and $\Omega$ is the region between them, then
if $\ell$ denotes the Euclidean length of $\gamma_I$ (the same of $-\gamma_I$), it holds
$$2\ell>2H\mathcal{I}(\Omega).$$
\end{lemma}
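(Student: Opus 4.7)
The plan is to parametrize $\gamma_I$ directly via Proposition~\ref{prop-eqcurv2H/y}, express both $l$ and $\mathcal{I}(\Omega)$ as integrals in the curve parameter, and reduce the claim to the obvious positivity of a single integral.

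I would write $\gamma_I(\tau) = (x(\tau), y(\tau))$ on an interval $[\tau_q,\tau_p] \subset (-\pi,0)$ containing $-\pi/2$; the hypothesis that $p$ and $q$ lie on the $y$-axis translates to $x(\tau_p)=x(\tau_q)=0$, and the Type~I description forces $\tau_q<-\pi/2<\tau_p$. Differentiating the formulas of Proposition~\ref{prop-eqcurv2H/y} (using $S_H'(\tau)=-\cos\tau\,e^{-\cos\tau/(2H)}$ on $(-\pi,0)$) produces the particularly clean identities
\begin{equation*}
x'(\tau) = -\frac{\cos\tau\cdot y(\tau)}{2H}, \qquad y'(\tau) = \frac{\sin\tau\cdot y(\tau)}{2H},
\end{equation*}
so that the Euclidean speed is simply $y(\tau)/(2H)$; this gives $2Hl = \int_{\tau_q}^{\tau_p} y(\tau)\,d\tau$.

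For $\mathcal{I}(\Omega)$, since $y'(\tau)<0$ on $(-\pi,0)$ the map $\tau\mapsto y$ is strictly monotonic, so the bounding curve on $\{x\ge 0\}$ becomes a graph $x=x_R(y)$. Combining the $y$-axis symmetry of $\Omega$ with a change of variables $y=y(\tau)$ in $\mathcal{I}(\Omega) = 2\int x_R(y)/y\,dy$ then yields
\begin{equation*}
2H\mathcal{I}(\Omega) = -2\int_{\tau_q}^{\tau_p} x(\tau)\sin\tau\, d\tau.
\end{equation*}

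The crux is an integration by parts on this last integral: because $x$ vanishes at both endpoints, $[x\cos\tau]_{\tau_q}^{\tau_p}=0$, so expanding and substituting the formula for $x'$ gives $\int x\sin\tau\,d\tau = -(1/2H)\int y\cos^2\tau\,d\tau$, whence $2H\mathcal{I}(\Omega) = (1/H)\int y\cos^2\tau\,d\tau$. Subtracting,
\begin{equation*}
2l - 2H\mathcal{I}(\Omega) = \frac{1}{H}\int_{\tau_q}^{\tau_p} y(\tau)\sin^2\tau\, d\tau,
\end{equation*}
which is strictly positive since $y>0$ and $\sin^2\tau$ vanishes only at $\tau=0$ and $\tau=-\pi$, neither of which lies in $[\tau_q,\tau_p]$. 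The only real difficulty is the bookkeeping of signs arising from the parametrization and the change of variable $y=y(\tau)$; once those are settled the geometric inequality collapses to the trivial positivity of $\int y\sin^2\tau\,d\tau$.
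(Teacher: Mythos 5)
Your proof is correct, and it takes a genuinely different (and tighter) route than the paper's. The paper splits $\gamma_I$ at its point of vertical tangency $P_2^+$, writes the length of each half and the quantity $2H\mathcal{I}$ of the corresponding half of $\Omega$ as one-dimensional integrals in the height variable, bounds the area integrals crudely by replacing the width $x_\gamma(y)$ with its maximum, and then compares the resulting integrands via the elementary inequalities $(1-a)(-u)<1$ and $(b-1)u<1$; the conclusion is obtained by adding the two piecewise estimates. You instead exploit the angle parametrization of Proposition \ref{prop-eqcurv2H/y}: the identities $x'=-\cos\tau\,y/2H$, $y'=\sin\tau\,y/2H$ (hence speed $y/2H$) are correct, the monotonicity of $y(\tau)$ on $(-\pi,0)$ justifies the change of variables in $\mathcal{I}(\Omega)$, and the integration by parts (legitimate because $x$ vanishes at both endpoints) yields the exact identity
\begin{equation*}
2l-2H\mathcal{I}(\Omega)=\frac{1}{H}\int_{\tau_q}^{\tau_p}y(\tau)\sin^2\tau\,d\tau,
\end{equation*}
which is strictly positive since $[\tau_q,\tau_p]$ is a nondegenerate subinterval of $[-\pi,0]$. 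What your approach buys is an exact formula for the deficit rather than an inequality obtained from two lossy estimates, with no case analysis; what the paper's approach buys is that its intermediate bounds (on $2H\mathcal{I}(\Omega_b)$ and $2H\mathcal{I}(\Omega_u)$ separately) are reused in spirit elsewhere in their constructions, but for the statement of this lemma your argument is cleaner and complete.
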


\begin{proof}
By prolonging $\gamma_I$, we find the $y-$coordinate of its $P_1-$point, say $P_1=(x_1,y_1).$
Then there are $a \in (0,1)$ and $b \in (1,1+T_H)$ such that
$z=y_1e^{a/2H}$ and $t=y_1e^{b/2H}.$ Moreover, the $P_2^+-$point of $\gamma_I$ is $P_2=(x_\gamma(y_1e^{1/2H}),y_1e^{1/2H})$ for $x_\gamma$ such that $\gamma_I(y)=(x_\gamma(y),y).$

The bottom part of $\gamma_I$ from $p$ to $P_2$ has length

$$\ell_b=\left(\frac{y_1e^{1/2H}}{2H}\right)\int_{-1+a}^0 \frac{e^{u/2H}}{\sqrt{1-u^2}}du$$
computed as in the proof of Lemma \ref{lemma-horizdesig}. The upper part has length 
$$\ell_u=\left(\frac{y_1e^{1/2H}}{2H}\right)\int_0^{-1+b} \frac{e^{u/2H}}{\sqrt{1-u^2}}du.$$

The bottom part of $\Omega$, denoted by $\Omega_b,$ bounded below by the bottom parts of $-\gamma_I$ and $\gamma_I$
and above by the line $y=y_1e^{1/2H}$ satisfies
$$\begin{array}{ll}
2H\mathcal{I}(\Omega_b)&=\displaystyle{2\left(2H\int_{y_1e^{a/2H}}^{y_1e^{1/2H}}\frac{x(y)-x(y_1e^{a/2H})}{y}dy\right)} \\ \\
&\leq \displaystyle{4H\int_{y_1e^{a/2H}}^{y_1e^{1/2H}}\frac{x(y_1e^{1/2H})-x(y_1e^{a/2H})}{y}dy},
\end{array}$$
because $x_\gamma(y)\leq x(y_1e^{1/2H})$ for all $y\in \left(y_1e^{a/2H},y_1e^{1/2H}\right).$
Hence
$$\begin{array}{ll}
2H\mathcal{I}(\Omega_b)& \leq \displaystyle{4H\frac{(1-a)}{2H}\left(x(y_1e^{1/2H})-x(y_1e^{a/2H})\right)}\\ \\
& =\displaystyle{2(1-a)\left(\frac{y_1e^{1/2H}}{2H}\right)\int_{-1+a}^0 \frac{-ue^{u/2H}}{\sqrt{1-u^2}}du},
\end{array}$$
and analogously for the upper part of $\Omega,$ denoted by $\Omega_u:$
$$2H\mathcal{I}(\Omega_u)\leq 2(b-1)\left(\frac{y_1e^{1/2H}}{2H}\right)\int_{0}^{b-1} \frac{ue^{u/2H}}{\sqrt{1-u^2}}du.$$

Therefore, since $(1-a)(-u)<1$ for any $u \in (a-1,0),$ we get
\begin{equation}\label{eq-desig1}
\begin{array}{ll}
\displaystyle{2\ell_b}&=\displaystyle{2\left(\frac{y_1e^{1/2H}}{2H}\right)\int_{-1+a}^0 \frac{e^{u/2H}}{\sqrt{1-u^2}}du}\\ \\
&>\displaystyle{ 2(1-a)\left(\frac{y_1e^{1/2H}}{2H}\right)\int_{-1+a}^0 \frac{-ue^{u/2H}}{\sqrt{1-u^2}}du \geq 2H\mathcal{I}(\Omega_b).}
\end{array}
\end{equation}

The same holds for the upper part of $\gamma_I$ because $(b-1)(u)<1$ for any $u \in (0,b-1):$
\begin{equation}\label{eq-desig2}
\begin{array}{ll}
\displaystyle{2\ell_u}&=\displaystyle{2\left(\frac{y_1e^{1/2H}}{2H}\right)\int_0^{-1+b} \frac{e^{u/2H}}{\sqrt{1-u^2}}du}\\ \\
&>\displaystyle{2(b-1)\left(\frac{y_1e^{1/2H}}{2H}\right)\int_{0}^{b-1} \frac{ue^{u/2H}}{\sqrt{1-u^2}}du\geq 
2H\mathcal{I}(\Omega_u).}
\end{array}
\end{equation}

Hence, adding inequalities \eqref{eq-desig1} and \eqref{eq-desig2}, the proof is concluded.
\end{proof}

\section{Construction of some admissible domains}

\label{existencedomains}

%\pati{In this Section we exhibit some examples of admissible domains whose constrution is ilustrated in Figure \ref{fig_2}}

%\begin{figure}[h]
  %\centering
  %\includegraphics[height=6cm]{domainAempty.png}
  %\caption{Admissible domain with $\{B_i\}$ on the left and empty $\{A_i\}$ on the right}
   % \label{fig_2}
%\end{figure}

%\ana{quebrei a figura que vc fez e coloquei no meio da prova. O que acha?}

\subsection{Admissible domains with $\{B_i\}=\emptyset$}

\begin{proposition}\label{prop-consthoro}
There are domains $U$ where the existence result from Theorem \ref{teo-Bempty} holds, that is, the family $\{B_i\}$ is empty and $2\alpha< \ell+2H\mathcal I(P)$, for all admissible polygons $\mathcal P$ in $U.$
\end{proposition}

We exhibit some of these domains which are bounded by 2 arcs of circles $C_1$ and $C_2$ of $\kappa_{euc}(C_i)>2H/y$ and two arcs $A_1$ and $A_2$ of Euclidean curvature $2H/y,$ if oriented inwards.

\begin{proof}
Let $R$ be a rectangle in $\mathbb{H}^2$ such that:
\begin{enumerate}
\item[-] the sides of $R$ are parallel to the $x$ and $y$ axes;
\item[-] Denoting by $d$ the Euclidean length of the diagonal of $R$ and by $p=(x(p),y(p))$ the Euclidean center of $R$, it holds
\begin{equation}\label{eq-desigd}
d<\frac{2y(p)}{3+2H};
\end{equation}
\item[-] $R$ is sufficiently small so that a subsolution exists (see Corollary \ref{cor-exist-particular}). 
\end{enumerate}

Let $q_1$ and $q_2$ be the two endpoints of a diagonal of $R$ with $x(q_1)<x(q_2)$ and $y(q_1)<y(q_2).$ 
Let $s_1$ and $s_2$ be two vertical segments of length $\varepsilon<y(q_1)(e^{T_H/2H}-1)$ whose medium points are $q_1$ and $q_2,$ respectively, where $T_H$ is the number described in Definition \ref{def-T}. For $i=1,2,$ we replace $s_i$ by $A_i,$ an arc of Type I (see the proof of Proposition \ref{prop-vertalig}) of a $2H/y-$curve that joins the endpoints of $s_i,$ is oriented in the direction of $p$ and has length less than $d/2.$ This is possible if $\varepsilon$ is taken sufficiently small. These two arcs will be part of $\partial U.$

To build the remaining part of $\partial U$ we connect the lowest endpoint of $s_1$ to the lowest endpoint of $s_2$ by a semicircle $C_1$ whose diameter is the straight segment that joins these two points. The arc $C_2$ is built analogously by connecting the upper endpoints of $s_1$ and $s_2.$ Observe that $C_1$ and $C_2$ are semicircles with same diameter. 

%are chosen so that  we get a circle when moving $C_1$ $\varepsilon$-upwards and joining it to $C_2.$ 

Our domain $U$ is the region bounded by the arcs $A_1, A_2, C_1$ and $C_2.$

\begin{figure}[h]
\centering
\includegraphics[width=6cm]{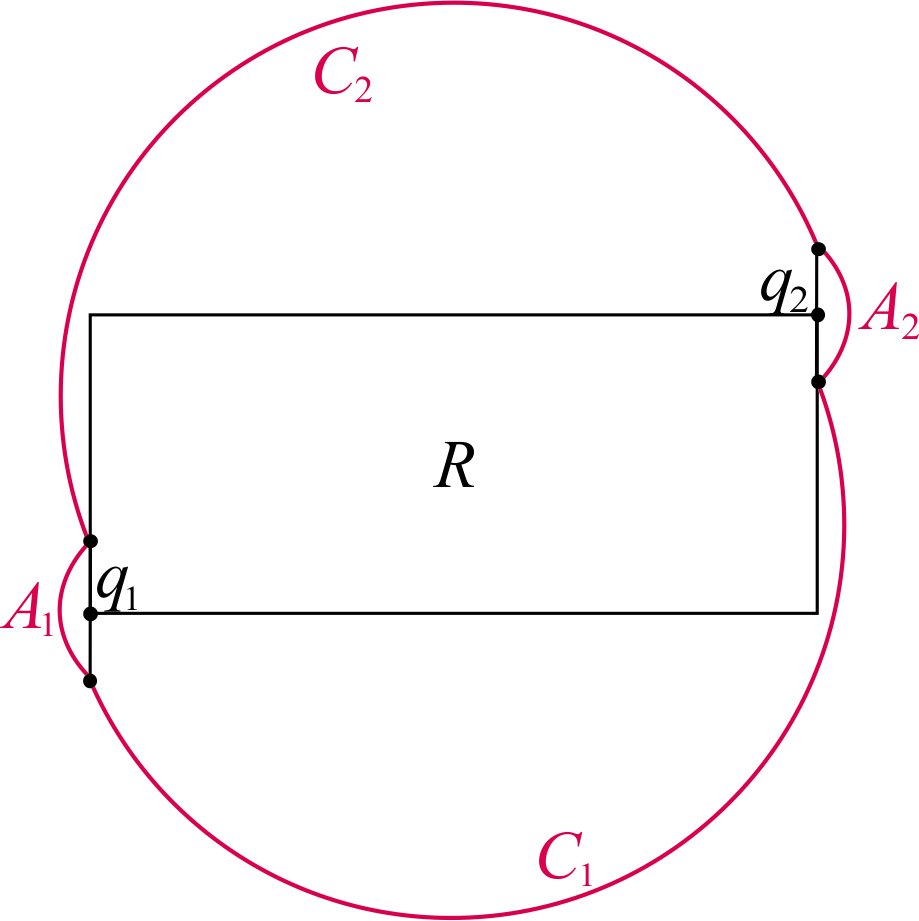}
\caption{Construction of the domain $U$ with empty $\{B_i\}$.}
\end{figure}

{\it Claim}: $U$ is an admissible domain.

Notice that
$$\kappa_{euc}(C_i)=\frac{2}{d}> \frac{2H}{y_{min}} \geq \frac{2H}{y},$$
where $y_{min}=\min \{y(q); q\in \bar{U}\}>y(p)-d-d/2$ and the first inequality is a consequence of \eqref{eq-desigd}, since it implies
$$H+\frac{3}{2}<\frac{y(p)}{d} \text{ and therefore } H<\frac{y(p)}{d}-\frac{3}{2}\leq \frac{y_{min}}{d}.$$

{\it Claim}: $U$ admits a solution to the Dirichlet problem (Definition \ref{def_diriinfinito}).

We need to check that for any admissible polygon $\mathcal{P}\subset \overline{U},$ the inequality  $2\alpha<\ell+2H\mathcal{I}(\mathcal{P})$ holds. Recall $\ell$ denotes the Euclidean perimeter of the polygon and $\alpha$ denotes the sum of the Euclidean lengths of the arcs $A_i$'s contained in $\cal P.$

First observe that if $A_1$ and $A_2$ are not contained in $\cal{P},$ then the inequality holds trivially, since $\alpha=0.$ Hence we only consider the cases where at least one of the arcs $A_i$'s is in $\cal{P}.$

If $\mathcal{P}$ has as vertices the two endpoints of an $s_i,$ then $\cal P$ contains $A_i$ and another arc $\tau$ with the same endpoints as $A_i.$ Since $\varepsilon<y(q_1)(e^{T_H/2H}-1),$ Proposition \ref{prop-vertalig} implies that $A_i$ is the shortest $2H/y$-arc joining its endpoints. Hence, $\ell=\alpha+l(\tau)>2\alpha$ and the inequality follows.

In any other case, $\mathcal{P}$ contains at least an endpoint of each $s_i,$ therefore the perimeter of $\mathcal{P}$ inside $U$ is $(\ell-\alpha)>2d.$ Since $\alpha$ is at most two times $d/2,$ it holds that $\alpha\leq d<\ell-\alpha$ implying that $2\alpha<\ell+2H\mathcal{I}(\mathcal{P}).$

\end{proof}

\subsection{Admissible domains with $\{A_i\}=\emptyset$}

In the next proposition we not only exhibit domains for which Proposition \ref{prop-Aempty} implies the existence of a solution to the Dirichlet Problem attaining $-\infty$ on the arcs of type $B_i,$ but also we prove the existence of such domains around any point in a $2H/y-$curve. We will use these solutions as barriers to prove Theorem \ref{teo-Aempty}.

\begin{proposition}\label{prop-constBC}
Let $\gamma$ be a $2H/y-$curve. Given any point $p\in \gamma,$ there is a domain $U,$ containing $p$, such that
\begin{enumerate}
\item[i)] the boundary of $U$ consists of two arcs of circles $C_1$ and $C_2$ of $\kappa_{euc}(C_i)>2H/y$ and two arcs $B_1$ and $B_2$ of Euclidean curvature $-2H/y,$ if oriented inwards;
\item[ii)] the intersection $\partial U \cap \gamma$ is contained in $B_1\cup B_2;$ 
\item[iii)] $U$ admits a solution to the Dirichlet problem (Definition \ref{def_diriinfinito}).
\end{enumerate}
\end{proposition}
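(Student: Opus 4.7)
The strategy is to mimic the construction in Proposition \ref{prop-consthoro}, with the $A_i$-arcs replaced by $-2H/y$-arcs $B_i$ arranged so that the given curve $\gamma$ enters and exits $U$ precisely through $B_1 \cup B_2$. Since translations in the $x$-direction are hyperbolic isometries, we may assume $p$ has $x$-coordinate $0$. Choose a small rectangle $R$ centered at $p$, with sides parallel to the coordinate axes, whose diagonal length $d$ satisfies $d<\frac{2y(p)}{3+2H}$ as in \eqref{eq-desigd}, and which is small enough that Corollary \ref{cor-exist-particular} supplies a bounded subsolution of \eqref{eq-cmc} on any slight enlargement of $R$. By shrinking $R$ further, we may also ensure that the short subarc of $\gamma$ through $p$ meets $\partial R$ transversally at exactly two points $r_1,r_2$ lying on two distinct sides of $R$.

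At each $r_i$, take a short sub-segment $s_i$ of the side of $\partial R$ through $r_i$, centered at $r_i$, of Euclidean length $\varepsilon$ with $\varepsilon < y(r_i)\bigl(e^{T_H/2H}-1\bigr)$, so that Proposition \ref{prop-vertalig} (or, if the relevant side is horizontal, its analogue obtained by the construction of Proposition \ref{prop-horizalig}) applied to $-2H/y$-curves furnishes a unique Type~I arc $B_i$ joining the endpoints of $s_i$ and bowing slightly into $R$. We orient $B_i$ so that its Euclidean curvature vector points out of the forming domain, which gives $\kappa_{euc}(B_i)=-2H/y$ with respect to the interior; we also require $\varepsilon$ small enough that $B_i$ still meets $\gamma$ transversally near $r_i$ and has length at most $d/2$. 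Close the boundary by two semicircular arcs $C_1, C_2$ joining the endpoints of $B_1$ to the corresponding endpoints of $B_2$, each being a semicircle of diameter at most $d$. Exactly as in the proof of Proposition \ref{prop-consthoro}, condition \eqref{eq-desigd} yields $\kappa_{euc}(C_i)=2/d>2H/y$ for every $(x,y)\in\overline{U}$.

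There remain three things to check: (a) $p\in U$ and $\partial U\cap\gamma\subset B_1\cup B_2$; (b) conditions (iii)--(iv) of Definition \ref{def-admsdomain}; and (c) the flux inequality $2\beta<l-2H\mathcal{I}(\mathcal{P})$ of Proposition \ref{prop-Aempty} for every admissible polygon $\mathcal{P}\subset\overline{\,U\,}$. For (a), the only meeting points of $\gamma$ with $\partial R$ are $r_1,r_2$, which by construction lie on $B_1,B_2$; a small perturbation argument shows that no new crossings are introduced. For (b), the companion arc $B_i^{*}$ provided by Proposition \ref{prop-vertalig} yields a small convex cap, so $\Omega^{*}$ is simply connected and, being contained in a small Euclidean disk, admits a bounded subsolution of \eqref{eq-cmc} by Corollary \ref{cor-exist-particular}. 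For (c) we argue as in Proposition \ref{prop-consthoro}: if $\mathcal{P}$ contains neither $B_1$ nor $B_2$ then $\beta=0$ and, since $\mathcal{P}$ is small, the inequality is trivial; if $\mathcal{P}$ has the two endpoints of some $s_i$ among its vertices, then $\mathcal{P}$ contains $B_i$ together with a second $\pm 2H/y$-arc joining the same endpoints, and the required strict inequality follows from Lemma \ref{lem-vertalign} applied to $-2H/y$-curves; in every remaining case the interior portion $l-\beta$ of $\partial\mathcal{P}$ must cross between distinct sides of $R$, giving $l-\beta>2d\ge 2\beta$.

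The main obstacle is item (c): one must exclude every admissible polygon for which $2\beta$ could approach $l-2H\mathcal{I}(\mathcal{P})$. The delicate subcase is when $\partial\mathcal{P}$ contains some $B_i$ together with an alternative $2H/y$-arc between its endpoints; here the required strict inequality is exactly what Lemma \ref{lem-vertalign} provides, and this is why the preparatory restriction $\varepsilon<y(r_i)\bigl(e^{T_H/2H}-1\bigr)$ is imposed, so that Proposition \ref{prop-vertalig}'s hypotheses hold and the alternative arc has Type~I geometry. Every other case reduces, by the smallness of $R$, to comparisons between Euclidean boundary lengths and the diagonal~$d$.
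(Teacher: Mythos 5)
Your construction follows the right template, but the verification of the flux inequality $2\beta<l-2H\mathcal{I}(\mathcal{P})$ has a genuine gap precisely at the case you call delicate. If $\mathcal{P}$ has only two vertices, namely the endpoints of one segment $s_i$, then $\partial\mathcal{P}=B_i\cup\tau$ for some other $\pm 2H/y$-arc $\tau$ with the same endpoints, and the required inequality reads $l(B_i)+2H\mathcal{I}(\mathcal{P})<l(\tau)$. Lemma \ref{lem-vertalign} cannot deliver this: it concerns a region bounded by a Type I arc and its reflection (two arcs of \emph{equal} length) and only gives $2l>2H\mathcal{I}$, which is the inequality with $\beta=0$; if $\tau$ were, say, the reflection of $B_i$, then $l(\tau)=l(B_i)$ and the needed inequality is simply \emph{false}. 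The only way out is to show that no such second $\pm2H/y$-arc joining the endpoints of $s_i$ lies inside $\overline{U}$, and this is exactly what the paper's quantitative hypotheses are designed for: in general position the segments replaced are \emph{horizontal}, with $\varepsilon<2y(p)e^{-1/H}L_H$ and $d<\tfrac{2y(p)}{3}(1-e^{-1/2H})$ forcing $y_{min}\ge y(q_i)e^{-1/2H}$, so that by Proposition \ref{prop-horizalig} the competing arc must dip below $U$; in the horizontal-tangency case, vertical sides are used and Proposition \ref{prop-vertalig} excludes Type II/III competitors. The paper explicitly chooses horizontal segments (rather than the vertical ones of Proposition \ref{prop-consthoro}) because with vertical segments non-convex two-vertex polygons appear whose length versus $\mathcal{I}(\mathcal{P})$ cannot be controlled; your proposal reintroduces exactly that difficulty by letting $B_i$ sit on whichever side $\gamma$ happens to cross, with the size condition $\varepsilon<y(r_i)(e^{T_H/2H}-1)$ borrowed from the $A$-arc construction.

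There is a second, quantitative gap in your remaining cases: you conclude with ``$l-\beta>2d\ge 2\beta$,'' but the inequality to prove is $2\beta<l-2H\mathcal{I}(\mathcal{P})$, so the area term must be dominated, e.g.\ $\beta+2H\mathcal{I}(\mathcal{P})<l-\beta$ with $\beta\le d$ and $l-\beta\ge 2d$ requires $2H\mathcal{I}(\mathcal{P})<d$. Since $2H\mathcal{I}(\mathcal{P})\le 4Hd^{2}/(y(p)-3d/2)$, this forces $d<\tfrac{2y(p)}{8H+3}$, which is the paper's condition \eqref{eq-desigdB} and is strictly stronger (for large $H$) than the bound $d<\tfrac{2y(p)}{3+2H}$ you imported from \eqref{eq-desigd}. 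So both the size of $d$ and the placement/size of the segments $s_i$ need the $B$-specific hypotheses of the paper, and the two-vertex polygons must be \emph{excluded}, not estimated via Lemma \ref{lem-vertalign}.
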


\begin{remark}
The main difference from the construction of the domain in Proposition \ref{prop-constBC} to the previous construction in Proposition  \ref{prop-consthoro} is that instead of replacing vertical segments, we will replace horizontal segments by curves of Euclidean curvature $-2H/y.$ The reason is because the inequality $2\beta<\ell-2H\mathcal{I}(\mathcal{P})$ has a minus sign and therefore is more delicate to be satisfied by any admissible polygon. If we did the procedure with vertical lines, non convex admissible polygons  with two vertices could arise and controling their length minus $\mathcal{I}\left(\mathcal{P}\right)$ is very complicated.
\end{remark}

\begin{proof}
We make a construction analogous to Proposition \ref{prop-consthoro}. Assuming that the tangent vector to $\gamma$ at $p=(x(p),y(p))$ is neither horizontal nor vertical, we take the rectangle $R$ as in the proof of Proposition \ref{prop-consthoro}. But here instead of being centered at $p,$ $R$ just contains $p$ and is such that $\gamma\cap R$ is a graph on the horizontal and vertical directions. Besides, we assume that $\gamma\cap \partial R$ consists in the two diagonal endpoints $q_1$ and $q_2.$ (See Figure \ref{fig-prop7}).

\begin{figure}[h]
\centering
\includegraphics[width=3.5cm]{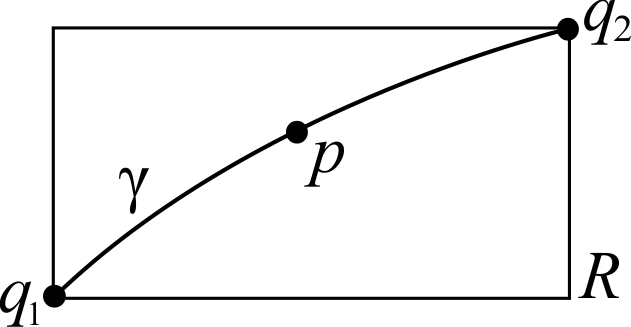}
\caption{Rectangle $R$.}
\label{fig-prop7}
\end{figure}

Denoting by $d$ the length of the diagonal of $R,$ we assume
\begin{equation}\label{eq-desigdB}
d<\min\left\{\frac{2y(p)}{3}(1-e^{-1/2H}), \frac{2y(p)}{8H+3}\right\}.
\end{equation}

As in the proof of last proposition, let $s_1$ and $s_2$ be two horizontal segments whose medium points are $q_1$ and $q_2,$ respectively, both of length $\varepsilon<2y(p)e^{-1/H}L_H,$ where $L_H$ is given by \eqref{eq-L}. We replace $s_i$ by $B_i,$ the shortest arc of $\kappa_{euc}=-2H/y$ (normal pointing  away from the segments $s_i$'s) that joins the endpoints of $s_i,$ with length less than $d/2$ (notice that we can ask the length of the arcs $B_i$'s to be as small as we want because their lengths become smaller with $\varepsilon$). Let $C_1$ (resp. $C_2$) be the semicircle that joins the left (resp. the right) endpoints of the segments $s_1$ and $s_2$, of diameter $d.$  We claim that the region $U$ bounded by $B_1,$ $C_1,$ $B_2$ and $C_2$ is the region that we are looking for.

\begin{figure}[h]
\centering
\includegraphics[width=6cm]{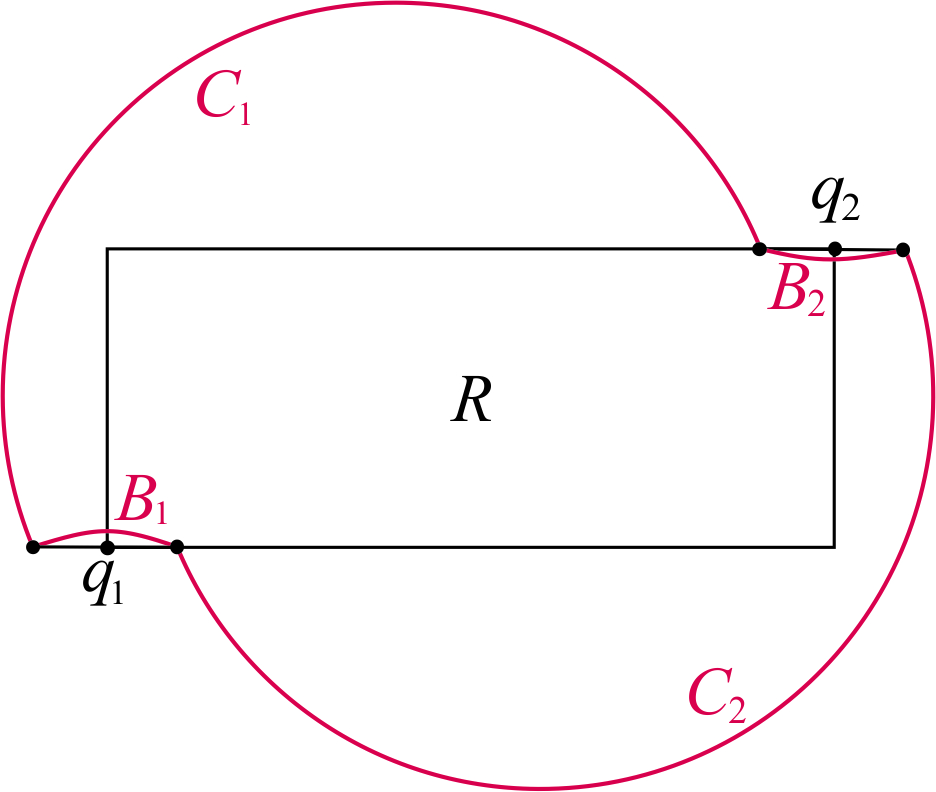}
\caption{Construction of the domain $U$ with empty $\{A_i\}$.}
\end{figure}

It follows from $d<\frac{2y(p)}{8H+3}<\frac{2y(p)}{2H+3}$, as in the proof of Proposition \ref{prop-consthoro}, that $\kappa_{euc}(C_i)>2H/y.$

Since $\varepsilon<2y(p)e^{-1/H}L_H$ and \eqref{eq-desigdB} implies that $y(q_1)> y(p)-d>y(p)e^{-1/2H},$ it follows from Proposition \ref{prop-horizalig} that $B_1$ and $B_2$ are the only arcs of $\kappa_{euc}=-2H/y$ joining their endpoints contained in $\bar{U}$ and, moreover, we have the existence of the arcs $B_i^*.$

Notice that in order to see there is no other arc joining the endpoints of $s_2$ (the upper segment) contained in $U,$ it is sufficient to check that $y_{min}\geq y(q_2)e^{-1/2H}.$  Since $y_{min}=\min\{y ; (x,y)\in \bar{U}\},$ it holds $y_{min}>y(p)-d-d/2= y(p)-3d/2,$  and then $y(q_2)>y(p)$ and  \eqref{eq-desigdB} implies $y_{min}\geq y(q_2)e^{-1/2H}.$

\vspace{0.5cm}

{\it Claim}: $U$ admits a solution to the Dirichlet problem (Definition \ref{def_diriinfinito}).

Let $\mathcal{P}$ be an admissible polygon in $\bar{U}$ and let $\ell$ denote its Euclidean perimeter. Because of Proposition \ref{prop-Aempty}, it is sufficient to prove the inequality:
\begin{equation}\label{eq-ineq}
\beta+2H\mathcal{I}(\mathcal{P})<(\ell-\beta).
\end{equation}

For any polygon we have $$\beta\leq \ell(B_1)+\ell(B_2)\leq d/2+d/2=d.$$
If $A(U)$ denotes the Euclidean area of $U,$ we have
$A(U)\leq \pi(d/2)^2+\varepsilon d\leq 2d^2$
and then
$$2H\mathcal{I}(\mathcal{P})\leq \frac{2H}{y_{min}}A(U)\leq
\frac{4Hd^2}{y_{min}} \leq   \frac{4Hd^2}{y(p)-3d/2}.$$

If $\mathcal{P}$ has 3 or 4 vertices, then ($\ell-\beta$) is at least $2d.$ If $\mathcal{P}$ has two vertices, each of them must belong to a different $B_i,$ and then ($\ell-\beta$)  is at least $2d$ as well.

Hence the inequality \eqref{eq-ineq} is a consequence of

$$d+ \frac{4Hd^2}{y(p)-3d/2}<2d,$$
which holds because of \eqref{eq-desigdB}.

If $p$ is a point of horizontal or vertical tangency in $\gamma,$ the construction becomes simpler but it is very similar. 
If $\gamma'(p)$ is vertical, we take a very thin rectangle centered at $p.$ We assume its height $d$ satisfies
$$d<\mbox{min}\left\{y(p)(1-e^{-1/2H}),\frac{2y(p)}{8H+1}\right\}.$$
We call $s_1$ and $s_2$ the bottom and top of the rectangle and we assume their length is $\varepsilon<2y(p)e^{-1/H}L_H$. Exactly as in the previous case, we replace $s_i$ by $B_i,$ the shortest arc of $\kappa_{euc}=-2H/y$ (normal pointing inwards) that joins the endpoints of $s_i,$ with length less than $d/2$; and we take $C_1$ (resp. $C_2$) the semicircle of diameter $d$ that joins the left (resp. the right) endpoints of the segments $s_1$ and $s_2$. Now the proof follows the exact same steps as in the previous case.
 
Let us assume that the tangent vector $\gamma'(p)$ is horizontal. Then let $R$ be a rectangle centered at $p$ with sides parallel to the axes such that:
\begin{enumerate}
\item[-] $\gamma$ intersects $\partial R$ in the vertical sides of $R,$ which we assume to have length 
\begin{equation}
h<2y(p)\frac{e^{-T_H/2H}-1}{1+e^{-T_H/2H}}=2y(p)\tanh(T_H/4H).
\label{eq-h}
\end{equation}

\item[-] The basis has length
\begin{equation}\label{eq-domadm}
b\leq \frac{y(p)}{4H+1}.\end{equation}
\end{enumerate}

We replace the vertical sides of $R$ by arcs $B_1$ and $B_2$  of curvature $-2H/y$ if oriented inwards. $B_1$ and $B_2$ have the same endpoints as the vertical sides and exist because of Proposition \ref{prop-vertalig}. Notice that we can apply Proposition \ref{prop-vertalig} because given any point $q$ in the basis of the rectangle, the hypothesis (\ref{eq-h}) implies that $h<y(q)(e^{-T_H/2H}-1)$, since $y(p)=y(q)-h/2.$

We also assume $h$ is small enough so that the length of $B_i$ is less than $b/2.$  We replace the horizontal segments of $\partial R$ by arcs of circle $C_1$ and $C_2$ such that the segments are diameters of the arcs. The domain $U$ is then bounded by $C_1,$ $B_1,$ $C_2$ and $B_2.$ We assume $U$ is small enough to admit a subsolution. Observe that $U^*$ is obtained by reflecting $B_i$ about the vertical line that joins its endpoints and that $U$ does not contain a $2H/y-$arc joining two vertically aligned vertices of $\partial U.$

As before, the assumption \eqref{eq-domadm} implies that the arcs $C_i$ have Euclidean curvature greater than $2H/y.$ 

To see that $U$ admits a solution to the Dirichlet problem, we analyse all possible admissible polygons $\mathcal{P}\subset \overline{U}.$

First notice that for any $\mathcal{P},$ it cannot have as vertices only the endpoints of an arc $B_i$ because, from Proposition \ref{prop-vertalig}, there is no other $2H/y-$curve connecting these points contained in $U.$
Therefore ($\ell-\beta$) is always greater than $2b.$ Besides, since the Euclidean area of $U$ satisfies $A(U)\leq bh+b^2\pi/4\leq 2b^2,$ as for the general case, we have

$$2H\mathcal{I}(\mathcal{P})\leq \frac{2HA(U)}{y_{min}}
\leq \frac{2H}{y(p)-b}2b^2\leq \frac{4Hb^2}{y(p)-b}.$$

Since $\beta\leq \ell(B_1)+\ell(B_2)<b,$ inequality \eqref{eq-desigdB} is a consequence of
$$b+\frac{4H}{y(p)-b}b^2<2b,$$
which holds from \eqref{eq-domadm}.

\end{proof}

\section{Existence Theorems - part II}

Let us finish by proving the remaining existence results.

\begin{proof}[Proof of Theorem \ref{teo-Aempty}:]
We proceed as in Proposition \ref{prop-Aempty} and consider the sequence of solutions $u_n$ to the Dirichlet Problem with boundary values $-n$ on $B_i^*$ and $\max\{f_i,-n\}$ on $C_i.$ Then Theorem \ref{theo-monotone} implies that the limit function $u$ exists and is a solution to the Dirichlet Problem in the convergence set $U.$

It remains to prove that the divergence set $V$ satisfies $V \cap \Omega=\emptyset.$ Notice that in this case, if $\kappa_{euc}(C_i)\equiv 2H/y,$ Lemma \ref{lem-localbarr} does not imply that $\left\{u_n\right\}$ is bounded from below in a neighborhood of $C_i.$ From Lemma \ref{lem-Vboundary}, a neighborhood of $C_i$ is either contained in $U$ or in $V.$ 

If it is contained in $U,$ by taking the neighborhood $U^-$ from Proposition \ref{prop-constBC}, bounded by 
$B'_1\cup C'_1\cup B'_2 \cup C'_2,$ with $C'_1\subset \Omega$ and $C'_2\cap \Omega=\emptyset$ we construct a barrier. Since $\left\{u_n\right\}$ is uniformly bounded in compact subsets of $U,$ $\inf_{C'_1} u_n$ is finite. Let $$m=\min\{\inf_{C'_1} u_n, \inf_{U^-\cap\partial\Omega} f_i\}.$$
Let $u^-: U^- \to \mathbb{R}$ be the solution of the Dirichlet Problem in $U^-$ taking value $m$ on $C'_1\cup C'_2,$ then the Maximum Principle implies that $u_n\geq u^-$ for all $n$ and the sequence is uniformly bounded. Proceeding as in the proof of Theorem \ref{theo_existence}, we conclude that $u=f_i$ on $C_i.$

If the neighborhood of $C_i$ is contained in $V,$ let $\mathcal{P}$ be an admissible polygon which is a connected component of $V\cap \Omega$ with $C_i$ on its boundary. From the definition of the flux, 
$$2H\mathcal{I}(\mathcal{P})=F_{u_n}(\Sigma B_i)+F_{u_n}(\Sigma C_i)
+F_{u_n}(\Sigma D_i),$$
for the arcs  $B_i, C_i$ of $\partial \Omega$ that are in $\mathcal{P},$ and $D_i$ the arcs of $\mathcal P$ contained in the interior of $\Omega$. Lemma \ref{lem-flux4} applied to the arcs $D_i$ and Lemma \ref{lem-flux5} to the arcs $C_i$ imply

$$\lim_{n\rightarrow+\infty} F_{u_n}(\Sigma D_i)=\ell-\beta-\Sigma \ell_{euc}(C_i),$$

$$\lim_{n\rightarrow+\infty} F_{u_n}(\Sigma C_i)=\Sigma \ell_{euc}(C_i).$$
Besides, from Lemma \ref{lem-flux1}, $|F_{u_n}(\Sigma B_i)|\leq \beta.$
Then,
$$2H\mathcal{I}(\mathcal{P})\geq -\beta+ \Sigma \ell_{euc}(C_i)+(\ell-\beta-\Sigma \ell_{euc}(C_i))=\ell-2\beta,$$
contradicting inequality \eqref{eq-ineqbeta}. Thus, $\left\{u_n\right\}$ is uniformly bounded in a neighborhood of $C_i$ and the proof may follow the steps of Proposition \ref{prop-Aempty}. Therefore, the set $V\cap \Omega$ must be empty and the barrier argument from Theorem \ref{theo_existence} implies that $u$ takes the boundary values $f_i$ on $C_i.$

It remains to show that $u$ diverges to $-\infty$ on the arcs $B_i.$ Since $\left\{u_n\right\}$ is a monotonically decreasing sequence $u_n\leq u_0\leq M,$ it is bounded from above in $\Omega.$ As in Theorem \ref{teo-Bempty}, given $p\in B_i,$ let $U$ be a neighborhood of $p$ for which Theorem \ref{theo_existence} (existence result) applies. Take $v_k$ the solution to the Dirichlet problem in $U\cap \Omega$ with boundary data $M$ on $\partial U \cap \Omega$ and $-k$ on $\partial \Omega \cap U.$ Then for $n\geq k,$ $u_n\leq v_k$ by the Maximum Principle. Therefore, $u\leq v_k$ for all $k$ and $u$ diverges to $-\infty$ on $U\cap \partial \Omega.$

\end{proof}

\begin{theorem}\label{teo-ABC}
Let $\Omega$ be an admissible domain such that the family $\{C_i\}$ is non empty. Assume that $\kappa_{euc}(C_i)\geq 2H/y.$ Then the Dirichlet Problem has a solution in $\Omega$ if and only if
\begin{equation}\label{eq-ineqalphaEbeta1}
2\alpha <\ell+2H\mathcal{I}(\mathcal{P})
\text{ and }
2\beta <\ell-2H\mathcal{I}(\mathcal{P})
\end{equation}
for all admissible $\mathcal{P}.$
\end{theorem}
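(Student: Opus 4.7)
The strategy is to combine the techniques of Theorems \ref{teo-Bempty} and \ref{teo-Aempty} by way of a double approximation, using the flux machinery of Section \ref{sec-flux} to handle both the necessity direction and the divergence-set analysis in the sufficiency direction.

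For necessity, assume $u$ solves the Dirichlet problem and let $\mathcal{P}$ be an admissible polygon. Decompose $\partial\mathcal{P}$ into arcs inherited from $\{A_i\}$ (total Euclidean length $\alpha$), arcs inherited from $\{B_i\}$ (total length $\beta$), and $2H/y$-arcs lying in the interior of $\Omega$ (total length $l-\alpha-\beta$). Integrating \eqref{eq-cmc} over $\mathcal{P}$ yields $2H\mathcal{I}(\mathcal{P})=F_u(\partial\mathcal{P})$; Lemma \ref{lem-flux3} evaluates the flux as $+\alpha$ on the $A_i$-portion and $-\beta$ on the $B_i$-portion, while Lemma \ref{lem-flux1} item 2 gives the strict bound $|F_u|<l-\alpha-\beta$ on the interior portion, which is non-empty since $\{C_i\}\neq\emptyset$ and $\Omega$ is simply connected prevent $\partial\mathcal{P}$ from closing up inside $\{A_i\}\cup\{B_i\}$ alone. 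Both $2\alpha<l+2H\mathcal{I}(\mathcal{P})$ and $2\beta<l-2H\mathcal{I}(\mathcal{P})$ then follow at once.

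For sufficiency, I would build a double-indexed family $\{u_{n,k}\}$ of solutions of \eqref{eq-cmc} in the auxiliary domain $\Omega^*$ with boundary values $n$ on each $A_i$, $-k$ on each $B_i^*$, and the truncated datum $\max\{-k,\min\{f_i,n\}\}$ on each $C_i$; existence is granted by Theorem \ref{theo_existence} together with the bounded subsolution postulated in Definition \ref{def-admsdomain}. Fix $k$ and let $n\to\infty$: the sequence $\{u_{n,k}\}_n$ is monotone increasing, locally bounded above near each $C_i$ by Lemma \ref{lem-localbarr} when $\kappa_{euc}(C_i)>2H/y$ and by the solutions supplied by Proposition \ref{prop-constBC} when $\kappa_{euc}(C_i)=2H/y$, so the Monotone Convergence Theorem \ref{theo-monotone} furnishes a limit $u_k$ on a convergence set $U_k\subset\Omega^*$. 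To rule out a divergence component $\mathcal{P}$ of $\Omega^*\setminus U_k$, Lemma \ref{lem-Vboundary} forces $\mathcal{P}$ to be an admissible polygon; the flux-balance on $\partial\mathcal{P}$ using Lemma \ref{lem-flux4} on the $A_i$ and the interior $2H/y$-arcs (where $u_{n,k}\to+\infty$) together with Lemma \ref{lem-flux1} on the remaining arcs yields $2H\mathcal{I}(\mathcal{P})\geq 2\alpha-l$, contradicting the first hypothesized inequality and so forcing $U_k=\Omega^*$.

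Now let $k\to\infty$: the sequence $\{u_k\}$ is monotone decreasing, it diverges to $-\infty$ on the convex caps $\Omega^*\setminus\overline{\Omega}$ by a barrier comparison inside each region $W$ bounded by $B_i\cup B_i^*$, and it is bounded at some interior point of $\Omega$ by the global subsolution. Applying Theorem \ref{theo-monotone} once more in $\Omega$ and excluding any divergence component $\mathcal{P}\subset\Omega$ uses the second inequality $2\beta<l-2H\mathcal{I}(\mathcal{P})$, the flux on the $B_i$ and interior $2H/y$-arcs being handled by Lemma \ref{lem-flux4} and the flux on any $C_i$ boundary arc with $\kappa_{euc}(C_i)=2H/y$ by Lemma \ref{lem-flux5}. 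The prescribed boundary values are then verified by the standard local barrier argument: $+\infty$ on $A_i$ from below and $-\infty$ on $B_i$ from above (as in the proofs of Theorems \ref{teo-Bempty} and \ref{teo-Aempty}), and continuous attainment of $f_i$ on $C_i$ via Proposition \ref{prop-constBC} and Theorem \ref{theo_existence}. The main obstacle is the second limit: one must show that the divergence to $-\infty$ on $\Omega^*\setminus\overline{\Omega}$ propagates across each $B_i$ to realize the boundary value $-\infty$ there, while simultaneously not swallowing any compact subset of $\Omega$; here the convexity condition iii) of Definition \ref{def-admsdomain}, combined with the rigidity of the divergence set given by Lemma \ref{lem-Vboundary}, is precisely what allows the argument to close.
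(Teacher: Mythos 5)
Your necessity argument is essentially the paper's and is fine (one small omission: a side of $\mathcal{P}$ may also lie on a $C_i$ arc of curvature exactly $2H/y$, which is handled by Lemma \ref{lem-flux2} rather than Lemma \ref{lem-flux1}). The sufficiency argument, however, departs from the paper and has a genuine gap at its crucial step, the second (decreasing) limit. Your only control from below is the claim that $u_k$ ``is bounded at some interior point of $\Omega$ by the global subsolution,'' but this comparison is invalid: for $k$ larger than the oscillation of the fixed bounded subsolution, the boundary datum $-k$ on $B_i^*$ lies \emph{below} the subsolution, so the maximum principle gives no lower bound, and a priori the decreasing sequence could diverge to $-\infty$ on all of $\Omega$, in which case Theorem \ref{theo-monotone} produces no limit and there is no divergence component $\mathcal{P}\subsetneq\Omega$ to feed into the flux contradiction. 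The paper closes exactly this hole by constructing a global lower envelope $u^-$ (equal to $0$ on the $A_i$, $-\infty$ on the $B_i$, $\min\{f_i,0\}$ on the $C_i$) from Theorem \ref{teo-Aempty} — legitimate because the $A_i$, having $\kappa_{euc}=2H/y$, may be relabeled as $C$-type arcs and hypothesis \eqref{eq-ineqalphaEbeta1} supplies precisely the polygon inequality $2\beta<l-2H\mathcal{I}(\mathcal{P})$ that Theorem \ref{teo-Aempty} requires — together with an upper envelope $u^+$ from Theorem \ref{teo-Bempty}; the sandwich $u^-\le u_n\le u^+$ makes the diagonal sequence (data $n$, $-n$, $f_n$) locally uniformly bounded, so the Compactness Theorem applies and \emph{no} divergence-set analysis is needed in the interior; only the local boundary barriers remain. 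Your scheme never produces such a barrier, and the ``main obstacle'' you name at the end (divergence crossing $B_i$ without swallowing compacts of $\Omega$, to be handled by condition iii) and Lemma \ref{lem-Vboundary}) is exactly this missing ingredient, asserted but not proved.

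There are also problems in the first limit as you set it up. The divergence components there live in $\overline{\Omega^*}$, not $\overline{\Omega}$: they may have $B_i^*$ as a side (its endpoints can be endpoints of adjacent $A_j$'s) or sweep through the caps bounded by $B_i\cup B_i^*$, while \eqref{eq-ineqalphaEbeta1} is assumed only for admissible polygons in $\overline{\Omega}$; you would need the bookkeeping done in the proof of Theorem \ref{teo-Cempty} (writing $\mathcal{P}=\mathcal{P}_1\cup W$ and comparing $l(B_i)$, $l(B_i^*)$, $\mathcal{I}(W)$ via Lemma \ref{lem-flux2}) to transfer the inequality. Moreover the flux balance is misstated: Lemma \ref{lem-flux4} cannot be applied to the $A_i$ from inside the divergence component (the sequence is not uniformly bounded there; the paper simply bounds that term by $\alpha$ using Lemma \ref{lem-flux1}), and the correct conclusion is $2H\mathcal{I}(\mathcal{P})\le 2\alpha-l$; the inequality $2H\mathcal{I}(\mathcal{P})\ge 2\alpha-l$ that you state is consistent with, not contradictory to, \eqref{eq-ineqalphaEbeta1}. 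Finally, Proposition \ref{prop-constBC} furnishes \emph{lower} barriers near $C_i$ arcs with $\kappa_{euc}=2H/y$; upper bounds near the $C_i$ come from Lemma \ref{lem-viz} (ii), so its role in your first limit is misassigned, though harmless there since the increasing sequence is bounded below anyway.
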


\begin{proof}
Let $\Omega^*$ be the  correspondent domain to $\Omega.$
Let $u_n$ be the solution in $\Omega^*$ to the Dirichlet problem with boundary data $u_n=n$ on the arcs of the family $\{A_i\},$ $u_n=-n$ on the ones from $\{B_i^*\}$ and $u_n=f_n$ on the arcs of the family $\{C_i\}.$ The function $f_n$ coincides with $f_i$ if $|f_i|< n,$  takes value $n$ if $f_i>n$ and $-n$ otherwise.

Define also $u^+:\Omega\rightarrow\mathbb{R}$ as the solution that goes to $+\infty$ on $A_i,$ vanishes on $B_i$ and coincides with $\max\{f_i,0\}$ on $C_i.$ It exists from Theorem \ref{teo-Bempty}. By the Maximum Principle, $u_n\leq u^+$ for all $n.$ Analogously, there is $u^-:\Omega\rightarrow\mathbb{R}$ that vanishes on $A_i$ and diverges to $-\infty$ on $B_i$ which, from the Maximum Principle, is below all $u_n.$

Therefore, the sequence $\left\{u_n\right\}$ is uniformly bounded in compact subsets of $\Omega$ and the already presented arguments lead us to conclude that the solution exists.

It remains to see that $u$ takes the required boundary data. For the boundary arcs $A_i,$ the construction of the function $v_k$ from Theorem \ref{teo-Bempty} with $u_0$ replaced by $u^-,$ implies that $u$ goes to infinity on $A_i.$ Also for the arcs $B_i,$ the argument from Theorem \ref{teo-Aempty} with $u_0$ replaced by $u^+$ gives the result. If $\kappa_{euc}(C_i)\ge2H/y,$ with strict inequality except for isolated points, then the sequence $(u_n)$ is uniformly bounded in a neighborhood of $C_i$ by Lemma \ref{lem-viz}. If $\kappa_{euc}(C_i)=2H/y,$ Lemma \ref{lem-viz} only provides the boundedness from above. To obtain the boundedness from below, we proceed as in the proof of Theorem \ref{teo-Aempty} using Proposition \ref{prop-constBC}.

The necessity of conditions \eqref{eq-ineqalphaEbeta1} follows as in the previous results.

\end{proof}

Theorem \ref{teo-Cempty} was stated in the Introduction and here we describe the steps of its proof.%\pati{sugiro remover a frase que vem antes daqui.} {\color{green}nao entendi. Qual frase?}\pati{esse paragrafo. mas acho que faz sentido mantelo, entao esquece.}

\begin{proof}[Proof of Theorem \ref{teo-Cempty}]
Differently from the last result, if one takes a sequence $u_n$ of solutions, there is no guarantee that it is bounded in compact subsets of $\Omega.$ That is why this proof is more delicate. Nevertheless the idea used in \cite{JS} to demonstrate the existence result for surfaces in $\mathbb{R}^3$ also works in this setting. We briefly describe the construction of two functions that will play the roles of $u^+$ and $u^-.$

For each $i$ such that $A_i$ is an arc of $\partial\Omega,$ let $u_i^+$ be defined in $\Omega^*$ as the solution to the Dirichlet problem with boundary values $u_i^+=+\infty$ on $A_i$ and zero on the remaining part of the boundary.
The existence of $u_i^+$ follows from Theorem \ref{teo-Bempty}. Besides, since $\Omega$ is an admissible domain, it has a subsolution and therefore, the Maximum Principle implies the existence of $N>0$ such that $u^+>-N.$

If $i$ is such that $B_i$ is an arc of $\partial\Omega,$ let $u_i^-$ be defined in $\Omega_i,$ the domain bounded by $\left(\cup_j A_j\right) \cup B_i^* \cup \left(\cup _{j\neq i} B_j\right)$ as the solution to the Dirichlet problem with boundary values $u_i^-=-\infty$ on $\cup _{j\neq i} B_j$ and zero on the remaining part of the boundary.
The existence of $u_i^-$ follows from Theorem \ref{teo-Aempty}. For that we observe that any admissible polygon $\mathcal{P}$ in $\overline{\Omega_i}$ satisfies \eqref{eq-ineqbeta}. If $\mathcal{P}$ is contained in $\Omega,$ this is a consequence of the hypothesis \eqref{eq-eqAB}. If $\mathcal{P}$ contains the domain $W$ bounded by $B_i$ and $B_i^*,$ then $\mathcal{P}=\mathcal{P}_1\cup W$ for $\mathcal{P}_1$ the admissible polygon in $\overline{\Omega}$ obtained removing $W$ from $\mathcal{P}.$ Therefore \eqref{eq-ineqbeta} implies that $2\beta(\mathcal{P}_1)<\ell(\mathcal{P}_1)-2H\mathcal{I}(\mathcal{P}_1).$

Since from Lemma \ref{lem-flux2}, $\ell(\partial W)\geq 2H\mathcal{I}(\mathcal{P}_1),$ we get
$$2\beta(\mathcal{P}_1)<\ell(\mathcal{P}_1)-2H\mathcal{I}(\mathcal{P}_1)+\ell(\partial W)- 2H\mathcal{I}(W).$$

Observing that $\beta(\mathcal{P}_1)=\beta(\mathcal{P})+\beta_i$ and 
$\ell(\mathcal{P}_1)=\ell(\mathcal{P})+\beta_i-\beta_i^*,$ it follows

$$2\beta<\ell(\mathcal{P})-2H\mathcal{I}(\mathcal{P}).$$

Set $u^+:\Omega^*\rightarrow\mathbb{R}$ as 
$u^+=\max_{i} u^+_i$
and $u^-:\Omega\rightarrow\mathbb{R}$ as 
$u^-=\min_{i} u^-_i.$

Let $v_n$ be the solution to the Dirichlet problem in $\Omega^*$ with boundary values $n$ on $A_i$ and $0$ on $B_i^*.$ For each $n>1,$ for $c\in (0,n),$ define the sets
$$E_c^n=\{v_n-v_0>c\} \text{  and  }
F_c^n=\{v_n-v_0<c\}.$$

For $c$ sufficiently close to $n,$ $E_n^c$ is a union of distinct and disjoint subsets of $\Omega^*,$ each of them containing one arc $A_i.$ We suppress the dependence on $n$ and denote each component of $E_n^c$ by $E_i^c.$
Define $\mu(n)$ as the infimum of the constant $c<n$ such that $E_i^c$ are all distinct and disjoint. Then define $u_n=v_n-\mu(n).$

We claim that for $M=N+\sup_{\Omega^*}v_0,$ it holds $u^--M\leq u_n$ in $\Omega$ and $u_n\leq u^++M$ in $\Omega^*.$ This claim follows from the Maximum Principle as it did in other ambient spaces, see for instance \cite{S}.

Consequently, $\left\{u_n\right\}$ is uniformly bounded on compact subsets of $\Omega$ and, therefore, it has a converging subsequence. Define $u$ as the limit of this sequence. Once again following the ideas of \cite{S}, one proves that $\mu(n)$ and $n-\mu(n)$ diverge to $+\infty.$ Since $u_n=-\mu(n)$ on $B_i^*$ and $u_n=n-\mu(n)$ on $A_i,$ the conclusions in the proofs of Theorems \ref{teo-Bempty}  and \ref{teo-Aempty} will prove that $u$ is the solution to the Dirichlet problem.

Conversely, the necessity of conditions \eqref{eq-eqAB} and \eqref{eq-ineqalphaEbeta} follows as in Theorem \ref{teo-Bempty} and Proposition \ref{prop-Aempty}.
\end{proof}

\section{Admissible domains with empty $\{C_i\}$} \label{sec-exemplo}

We devote this section to prove the existence of domains for which Theorem \ref{teo-Cempty} applies. We remark that even in $\mathbb{R}^2$ where we have a 6-dimensional isometry group, the construction of these domains requires some effort, as it was presented in \cite{S}.
%The Section is divided in the following steps \pati{(para apagar depois)}
%
%-describe the construction of $\Omega_s,$ $s\in [0,s_0].$ok
%
%-- find $D+,$ $D-,$ $E+,$ $E-,$ depending on $s.$ok
%
%-- prove the existence of $s_0$ such that $\Omega_s$ is connected for $s\in [0,s_0).$ ok
%
%- notice that $\Omega_0$ satisfies one inequality and $\Omega_{s_0}$ the other. \pati{tem buracos aqui}
%
%-- conclude the existence of $s^\star\in [0,s_0]$ for which equality holds. ok
%
%- prove that $\Omega^\star=\Omega_{s^\star}$ satisfies hypothesis of Theorem 3. ok

%\subsection{Construction of $\Omega_s$}

In order to search for domains to apply Theorem \ref{teo-Cempty} we start with the domain $U$ bounded by one loop of a $2H/y-$curve $\gamma,$ which starts at a $P_3-$point, goes down to the $P_1=(0,y_0)-$point and then comes back up to $P_3=(0, y_0e^{\frac{1+T_H}{2H}}).$ From \eqref{eq-curva-x=x(y)}, we may assume $\gamma$ is parameterized by 
$\gamma^+(t)=(x(t),y_0e^{t/2H})$ and $\gamma^-(t)=(-x(t),y_0e^{t/2H}),$ $t\in [0,1+T_H],$ $x(t)$ given by \eqref{eq-curva-x=x(y)}.

%\begin{equation}\label{eq-curva-x=x(y)}
%x(t)=\frac{y_0e^{1/2H}}{2H}\int_{-1}^{t-1} \frac{-ue^{u/2H}}{\sqrt{1-u^2}}du= \frac{y_0e^{1/2H}}{2H}\int_{-1}^{t-1} g_H(u)du
%\end{equation}

We proceed to construct a family of domains $\Omega_s\subset U.$ Given $s\in [0,1],$ let 
$D^+(s)=\gamma^+(s)$ and $D^-(s)=\gamma^-(s).$ Let $E^+(s)$ be the second intersection point of the vertical line through $D^+(s)$ with $\gamma^+;$ and $E^-(s)$ be the fourth vertex of a rectangle with sides parallel to the axes and vertices $D^+(s),$ $D^-(s)$ and $E^+(s)$. (See Figure \ref{emptyC}).

From the definition of $\gamma,$ one can see that $E^+(s)=(x(s),y_0e^{\varphi(s)/2H}),$ where $\varphi:[0,1]\to [1,1+T_H]$ is a bijection given by \begin{equation}\label{eq-defvarphi}\int_{s-1}^{\varphi(s)-1} g_H(u)du=0.\end{equation}

The domain $\Omega_s$ is bounded by the subarc of $\gamma^+$ from $D^+(s)$ to $E^+(s)$ and of $\gamma^-$ from $D^-(s)$ to $E^-(s),$ which we name $A^+$ and $A^-,$ respectively. The remaining part of the boundary of $\Omega_s$ consists in two arcs $B_D$ and $B_E$. $B_D$ is the unique $2H/y-$arc (oriented downwards) that joins $D^-(s)$ and $D^+(s)$ and $B_E$ is the unique $2H/y-$arc (oriented upwards) that joins $E^-(s)$ and $E^+(s)$ and it is not contained in $\gamma.$ (See Figure \ref{emptyC})

\begin{figure}[h!]
\centering
\includegraphics[height=4cm]{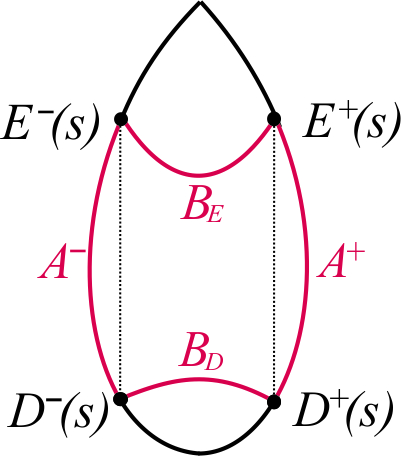}
\caption{Construction of $\Omega_s$.}
\label{emptyC}
\end{figure}

For $s=0,$ the $B-$type arcs are empty and $\Omega_0=U.$ For small values of $s$, $\Omega_s$ is a well defined domain and then there is $s_0$ for which $B_D(s_0)$ intercepts $B_E(s_0)$ at one point and for $s>s_0,$ $\Omega_s$ is no longer a domain.

%\begin{figure}[h!]
%\centering
%\includegraphics[scale=.4]{U_Hmeio.png}
%\caption{A domain $\Omega_s$ for $H=1/2.$ The arcs $B_D$ and $B_E$ are just illustrations.}
%\label{fig-omegas}
%\end{figure}

Let $E$ be the $P_1-$point of the arc $B_E$ and let $(0, y_0e^{e(s)/2H})$ be its coordinates. Let also $D$ be the $P_4-$ point of $B_D,$ $D=(0, y_0e^{d(s)/2H}).$ Some computations imply that for $s\in (0,1)$ the functions $e$ and $d$ are given by the following expressions:

\begin{equation}\label{eq-function_e}
e^{e(s)/2H}\int_{-1}^{-1+\varphi(s)-e(s)}g_H(u)du=\int_{-1}^{-1+s}g_H(u)du
\end{equation}
and 
\begin{equation}\label{eq-function_d}
e^{(d(s)-2)/2H}\int_{1-(d(s)-s)}^{1}-g_H(u)du=\int_{-1}^{-1+s}g_H(u)du.
\end{equation}

%\includegraphics[scale=.5]{explicacao.png}

%\ana{Para mim a defini��o da $\varphi$ ja estava bem posta sobre todo o intervalo $[0,1]$, sem precisar pensar em extensao. Mas sobre o $s$, eu ainda acho que soh faz sentido para $s\neq 0,1$ pois os arcos B sao vazios. Como a gente nao usa em nenhum lugar a existencia das funcoes $d(s)$ e $e(s)$ para $s=0$ , acho que seria melhor nao colocar nada sobre isso, e escrever a claim pra $0<s\leq s_0$. Ah, e acho que a figura acima poderia ser retirada}\pati{Com a figura, estendendo continuamente $\varphi$ para os extremos do intervalo $(0,1),$ temos $\varphi(1)=1$ e $\varphi(0)=1+T_H.$ De fato, a express\~ao \ref{eq-function_e} s\'o \'e uma definição boa para $s\in (0,1),$ mas olhando a figura e conhecendo a proposição \ref{prop-horizalig} (horiz align), temos que $e(0)=\varphi(0).$}

For $s\in [0,s_0],$ define $\alpha(s)$ and $\beta(s)$ as the length of the $A-$type arcs and $B-$type arcs, respectively, that bound $\Omega_s.$
% Analogously define $\beta(s)$ and $\mathcal{I}(s)=\mathcal{I}(\Omega_s).$ 

As a first step to find a domain for Theorem \ref{teo-Cempty} we have the following  fact.

\begin{theorem}
There is $s^\star\in (0,s_0)$ such that $\alpha(s^\star)=\beta(s^\star)+2H\mathcal{I}(\Omega_{s^\star}).$
\end{theorem}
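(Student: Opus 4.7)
My plan is to apply the intermediate value theorem to the continuous function
\[
  f(s) := \alpha(s) - \beta(s) - 2H\mathcal{I}(\Omega_s), \qquad s \in [0, s_0].
\]
Continuity of $f$ follows from the smooth dependence of $D^{\pm}(s), E^{\pm}(s)$ on $s$ through \eqref{eq-function_e}--\eqref{eq-function_d}, together with the uniqueness of the short $2H/y$-arcs $B_D(s), B_E(s)$ between their horizontally-aligned endpoints (Proposition \ref{prop-horizalig}); hence the quantities $\alpha(s), \beta(s)$ and $\mathcal{I}(\Omega_s)$ depend continuously on $s$.

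The first step is to compute $f(0)$ explicitly. At $s = 0$ the $B$-arcs degenerate ($D^{\pm} = P_1, E^{\pm} = P_3$), so $\beta(0) = 0$, $\alpha(0) = l_{euc}(\gamma)$ and $\mathcal{I}(\Omega_0) = \mathcal{I}(U)$. Differentiating the parametrization yields $|(\gamma^+)'|_{euc} = y(t)/(2H\sqrt{t(2-t)})$, and the substitution $u = t - 1$ gives
\[
  l_{euc}(\gamma) = \frac{y_0 e^{1/2H}}{H}\int_{-1}^{T_H}\frac{e^{u/2H}}{\sqrt{1-u^2}}\,du.
\]
Integrating $\mathcal{I}(U)$ over horizontal slices, exchanging the order of integration, and invoking the defining relation $\int_{-1}^{T_H} g_H(u)\,du = 0$ from Definition \ref{def-T} reduces it to
\[
  2H\mathcal{I}(U) = \frac{y_0 e^{1/2H}}{H}\int_{-1}^{T_H}\frac{u^2\, e^{u/2H}}{\sqrt{1-u^2}}\,du.
\]
Subtracting these, the factor $(1-u^2)/\sqrt{1-u^2} = \sqrt{1-u^2}$ produces
\[
  f(0) = \frac{y_0 e^{1/2H}}{H}\int_{-1}^{T_H} e^{u/2H}\sqrt{1-u^2}\,du > 0.
\]

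The second step is to show $f(s_0) \leq 0$, so that $f$ changes sign on $[0, s_0]$. At $s = s_0$ the arcs $B_D$ and $B_E$ first meet, necessarily at a single tangent point $M$ where both have horizontal tangent and equal magnitude $2H/y_M$ of Euclidean curvature, with opposite concavity relative to the interior of $\Omega_{s_0}$. I will exploit the decomposition $U = \Omega_s \sqcup W_D \sqcup W_E$, where $W_D$ (respectively $W_E$) is the region enclosed by $B_D$ and the $\gamma$-subarc $\arc{D^+P_1D^-}$ (respectively by $B_E$ and $\arc{E^+P_3E^-}$). Each $W_\bullet$ is bounded by two curves of Euclidean curvature $2H/y$ pointing inward, and combining the perimeter and area decompositions gives the identity
\[
  f(s) = f(0) - G_D(s) - G_E(s), \qquad G_\bullet(s) := l_{euc}(\partial W_\bullet) - 2H\mathcal{I}(W_\bullet).
\]
For small $s$ each $G_\bullet(s)$ is positive by Lemma \ref{lem-flux2} applied to a bounded solution on $W_\bullet$ (which exists by Corollary \ref{cor-exist-particular}). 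At $s = s_0$ I will use the tangent-contact condition at $M$---which identifies $B_D$ and $B_E$ as subarcs of two specific $2H/y$-curves whose $P_3$- and $P_1$-points coincide with $M$---to perform a parametric evaluation analogous to that of $f(0)$, establishing $G_D(s_0) + G_E(s_0) \geq f(0)$, whence $f(s_0) \leq 0$. The IVT then delivers the required $s^{\star} \in (0, s_0)$.

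The main obstacle will be this last inequality $G_D(s_0) + G_E(s_0) \geq f(0)$. Since the formula $f(s) = f(0) - G_D(s) - G_E(s)$ is tautological, independent information is needed, and this must come from a direct integral calculation using the explicit parametrizations of $B_D$ and $B_E$ at $s_0$, mirroring the argument for $f(0)$. The difficulty is that at $s = s_0$ the regions $W_\bullet$ may exceed the size bound in Corollary \ref{cor-exist-particular}, so the estimate cannot rely on the flux inequality via solutions and must instead be extracted by a purely geometric comparison between the $2H/y$-arcs $B_D, B_E$ and the $\gamma$-subarcs around $P_1$ and $P_3$.
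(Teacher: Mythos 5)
Your overall skeleton coincides with the paper's: both set $f(s)=\alpha(s)-\beta(s)-2H\mathcal{I}(\Omega_s)$ on $[0,s_0]$ and apply the intermediate value theorem. Your treatment of the endpoint $s=0$ is correct and in fact more explicit than the paper's (which simply invokes Lemma \ref{lem-vertalign}): your computations of $l_{euc}(\gamma)$ and of $2H\mathcal{I}(U)$ via Fubini and the relation $\int_{-1}^{T_H}g_H=0$ are right, and the resulting formula $f(0)=\frac{y_0e^{1/2H}}{H}\int_{-1}^{T_H}e^{u/2H}\sqrt{1-u^2}\,du>0$ is a clean exact evaluation.

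The genuine gap is at the other endpoint. Your identity $f(s)=f(0)-G_D(s)-G_E(s)$ is correct but, as you yourself note, tautological: the statement $f(s_0)\le 0$ is exactly equivalent to $G_D(s_0)+G_E(s_0)\ge f(0)$, and you offer no proof of it beyond announcing a ``parametric evaluation analogous to that of $f(0)$.'' This is where essentially all of the content of the theorem lives, and none of it is carried out. (The observation that each $G_\bullet>0$ via the flux inequality points in the wrong direction — you need the $G$'s to be \emph{large}, not merely positive — and the worry about Corollary \ref{cor-exist-particular} failing on $W_\bullet$ is a distraction from the real issue.) The paper's route at $s=s_0$ is different and worth knowing: it discards the area term altogether and proves the sharper inequality $\alpha(s_0)<\beta(s_0)$, which already gives $F(s_0)<0$ since $\mathcal{I}(\Omega_{s_0})\ge 0$. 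Using the arc-length formula $L(t_1,t_2)=\frac{ze^{1/2H}}{2H}\int_{t_1}^{t_2}\frac{g_H(u)}{-u}\,du$ and the tangency condition $d(s_0)=e(s_0)=d_0$ at the contact point of $B_D$ and $B_E$, the inequality reduces after a change of variables to the pointwise comparison of $\frac{e^{u/2H}}{\sqrt{1-u^2}}$ with $\frac{e^{u/2H}}{\sqrt{1-(1-|u-c|)^2}}$ on $(s_0-1,\varphi_0-1)$, $c=d_0-1$, which holds once one establishes the double inequality $2(\varphi_0-1)<d_0<2s_0$. Proving those two bounds requires a separate analysis of the implicitly defined functions $e(s)$ and $d(s)$ from \eqref{eq-function_e}--\eqref{eq-function_d} (the paper's Claims 1 and 2). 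Your proposal contains no substitute for this analysis, so as written it does not prove the theorem.
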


\begin{proof}
Since $F(s):= \alpha(s)-\beta(s)-2H\mathcal{I}(\Omega_{s})$ is a continuous function on $s\in [0,s_0],$ it is sufficient to see that $F(0)>0$ and $F(s_0)<0.$

For $s=0$, we are in the same situation as in Lemma \ref{lem-vertalign}, which asserts that $F(0)>0.$ For $s=s_0,$ some computations must be done. First notice that $s_0$ is the solution of $d(s)=e(s).$ We denote $d(s_0)$ by $d_0$ and $\varphi(s_0)$ by $\varphi_0.$

Besides, as it was already computed in the proof of Lemma \ref{lemma-horizdesig}, the length of an arc in a $2H/y-$curve with $P_1=(z,0)-$point and that goes from height $ze^{t_1/2H}$ to height $ze^{t_2/2H}$ is $$L(t_1,t_2)=\frac{ze^{1/2H}}{2H}\int_{t_1}^{t_2}\frac{g_H(u)}{-u}du.$$

Therefore $\alpha(s_0)<\beta(s_0)$ is equivalent to

$$\int_{s_0-1}^{\varphi_0-1}\frac{g_H(u)}{-u}du<e^{\frac{(d_0-2)}{2H}}\int_{1-(d_0-s_0)}^1\frac{g_H(u)}{-u}du + e^{\frac{d_0}{2H}}\int
_{-1}^{-1+\varphi_0-d_0}\frac{g_H(u)}{-u}du$$
%or to \pati{para apagar a proxima linha}
%$$\int_{s_0-1}^{\varphi_0-1}\frac{e^{u/2H}}{\sqrt{1-u^2}}du<\int
%_{s_0-1}^{d_0-1}\frac{e^{v/2H}}{\sqrt{1-(d_0-2-v)^2}}dv+\int
%_{d_0-1}^{\varphi_0-1}\frac{e^{v/2H}}{\sqrt{1-(d_0-v)^2}}dv$$
or, after a change of variables, to
$$\int_{s_0-1}^{\varphi_0-1}\frac{e^{u/2H}}{\sqrt{1-u^2}}du<\int
_{s_0-1}^{\varphi_0-1}\frac{e^{u/2H}}{\sqrt{1-(1-|u-c|)^2}}du$$
for $c=d_0-1.$

A straightforward computation splitting the interval  $(s_0-1,\varphi_0-1)$ as   $\left(s_0-1,c\right]\cup \left[c,\varphi_0-1\right)$ shows that the above inequality for the integrands holds on  $\left(s_0-1,c\right]$ if $d_0<2s_0$ and holds on  $\left[c,\varphi_0-1\right)$ if $2(\varphi_0-1)<d_0$.

Hence it is enough to show that $2(\varphi_0-1)<d_0<2s_0.$

\vspace{.3cm}

%{\bf Claim 1:} The function $e$ satisfies $e(s_0)\geq 2(\varphi(s_0)-1).$ 

%\pati{a gente prova para qualquer $s$ com a mesma prova do $s_0$ fixo. Vou escrever no qquer e depois vemos o que fica melhor.}

{\it Claim 1:} The function $e$ satisfies $e(s)\geq 2(\varphi(s)-1)$ for all $s\in (0,1).$

Fix $s$. Consider $Q_s=(0,y_0e^{2 (\varphi(s)-1)/2H})$ and take  $\gamma_{Q_s}$ the $2H/y-$curve with $Q_{s}$ as its $P_1-$point. The intersections of $\gamma_{Q_s}$ and the horizontal line $\{y=y_0e^{\varphi(s)/2H}\}$ are at a distance $2x_{Q_s}$ % greater than $2x(s)$ because this distance is
given by
$$2{x_{Q_s}}=\frac{y_0e^{1/2H}}{2H}e^{\frac{2(\varphi(s)-1)}{2H}}\int_{-1}^{1-\varphi(s)}g_H(u)du,$$ which is greater than $2x(s)$ if and only if% and therefore $2x(s)>2x_{Q_s}$ becomes equivalent to

%\pati{para apagar a primeira linha:}
%$$\frac{y_0e^{1/2H}}{2H}e^{\frac{2(\varphi(s)-1)}{2H}}\int_{-1}^{1-\varphi(s)}g_H(u)du,$$
$$\int_{-1}^{s-1}g_H(u)du<e^{\frac{2(\varphi(s)-1)}{2H}}\int_{-1}^{1-\varphi(s)}g_H(u)du,$$ which is a consequence of the positivity of the integrand in the interval $(-1, 1-\varphi(s))$ that contains $(-1, s-1)$ (from the definition of $\varphi$ \eqref{eq-defvarphi}) and of the fact that $e^{\frac{2(\varphi(s)-1)}{2H}}>1.$

Consider the function $l$ that associates to each point $P$ on the segment $V=\{(0,y); y\in (y_0e^{(\varphi(s)-1-T_H)/2H}, y_0e^{\varphi(s)/2H})\}$ the distance between the two intersections of $\gamma_{P}$ with $\{y=y_0e^{\varphi(s)/2H}\}.$ As described in the proof of Proposition \ref{prop-horizalig} (see \eqref{eq-defl}), $l$ is continuous, takes value 0 on the extrema of the segment $V$ and has a unique critical point which is a maximum. Since $l(E)= l((0,y_0))$ and $l(Q_s)> l(E),$ we have that the point $Q_s$ is necessarily between $E$ and $(0,y_0)$. In particular, $Q_s$ is below $E$ and the claim follows. 

%\supseteq \{x=0, y\in (y_0, y_0e^{2(\varphi(s)-1)/2H})\}$ 

%Observe that the point $E=E(s)\in V_1$ is the unique other point in $V_1$ with $l$ assuming the same value as $(0,y_0).$ Now we look at the smaller segment $V_0$ and notice that since $l$ is equal to $l((0,y_0))$ in one extremal and greater than $l((0,y_0))$ on the other one, which is $Q_s.$ From the profile of the function $l,$ $E$ must be in $V_1\backslash V_0.$
 
%\begin{center}
%\begin{figure}
%\includegraphics[scale=.5]{function_d.png}
%\caption{Profile of $l$}
%\end{figure}
%\end{center}

Since $d_0=d(s_0)=e(s_0),$ the first inequality is proved.

\vspace{0.5cm}

{\it Claim 2:} The function $d$ satisfies $d(s)\leq 2s$ for all $s\in (0,s_0].$

For fixed $s\in (0,s_0],$ define $F_s(t)$ as the left-hand side of expression \eqref{eq-function_d}, that is, $$F_s(t)=e^{(t-2)/2H}\int_{1-(t-s)}^{1}-g_H(u)du,$$ and consider $X(s)=\int_{-1}^{-1+s}g_H(u)du$ (the right-hand side of \eqref{eq-function_d}). It is easy to see that $F_s(s)=0<X(s)$ and that $$F_s(2s)=e^{2(s-1)/2H}\int_{-1}^{-1+s}\frac{-ue^{-u/2H}}{\sqrt{1-u^2}}du>X(s),$$ once $s\in [0,1].$  Since $d$ is the solution of $F_s(d)=X(s)$ and $F_s$ is continuous, then $s<d(s)<2s.$ 

Therefore, the second inequality is true and the theorem is proved.

%\pati{(here it is why we exclude $s=0$ from the Claim, since $X(0)=0)$}

\end{proof}

We conclude by applying Theorem \ref{teo-Cempty} to $\Omega_{s^\star}.$

\begin{theorem}
The domain $\Omega_{s^\star}$ for $y_0$ small enough is an admissible domain with $\{C_i\}$ empty that admits a solution to the Dirichlet problem.
\end{theorem}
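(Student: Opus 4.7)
The plan is to apply Theorem \ref{teo-Cempty} to $\Omega_{s^\star}.$ This requires three things: (a) that $\Omega_{s^\star}$ is admissible in the sense of Definition \ref{def-admsdomain}; (b) the balance relation $\alpha=\beta+2H\mathcal{I}(\Omega_{s^\star}),$ which is exactly the conclusion of the previous theorem, so this is free; and (c) the strict inequalities \eqref{eq-ineqalphaEbeta} for every admissible polygon \emph{properly} contained in $\overline{\Omega_{s^\star}}.$

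For (a), I would first verify the curvature condition on the boundary: by construction $A^\pm$ are subarcs of the $2H/y$-curve $\gamma$ and $B_D,$ $B_E$ are $-2H/y$-arcs, and no two $A$'s or two $B$'s share an endpoint. Next, I need to exhibit $B_D^*$ and $B_E^*$: since $D^+,D^-$ (resp.\ $E^+,E^-$) are horizontally aligned, Proposition \ref{prop-horizalig} applied at height $y_0 e^{s^\star/2H}$ (resp.\ $y_0 e^{\varphi(s^\star)/2H}$) gives a unique $+2H/y$ arc lying above (resp.\ below) $B_D$ (resp.\ $B_E$) bounding with it the smallest convex $2H/y$-lens; these are the required $B_D^*,B_E^*.$ To get the bounded subsolution required by item iv), note that under the hyperbolic isometry $(x,y)\mapsto (\lambda x,\lambda y)$ the domain $\Omega_{s^\star}^*$ rescales so that its Euclidean diameter is of order $y_0;$ hence by taking $y_0$ small enough we can fit $\Omega_{s^\star}^*$ in an Euclidean disk satisfying the radius hypothesis of Corollary \ref{cor-exist-particular}, which yields a subsolution. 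Simple connectedness of $\Omega_{s^\star}$ and of $\Omega_{s^\star}^*$ is clear from the construction.

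For (c), I would enumerate the possible admissible polygons $\mathcal{P}\subsetneq\overline{\Omega_{s^\star}}.$ Their vertices lie in $\{D^+,D^-,E^+,E^-\}$ and their sides are $\pm 2H/y$-arcs. The genuinely new cases are the \emph{digons}: for the horizontal pairs $\{D^+,D^-\}$ and $\{E^+,E^-\}$ the two sides of the digon must be $2H/y$-arcs joining horizontally aligned points, and Lemma \ref{lemma-horizdesig} (applicable because taking $y_0$ small makes the base point of the downward $2H/y$-arc satisfy $a+e^{a/2H}<2$) gives both strict inequalities $l(\gamma^-)<l(\gamma^+)+2H\mathcal I$ and the analogous one with $\pm$ swapped. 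For the vertical pairs $\{D^+,E^+\}$ and $\{D^-,E^-\}$ the digon is of the type considered in Lemma \ref{lem-vertalign}, which provides $2l>2H\mathcal I(\Omega),$ from which both strict inequalities for the digon follow after plugging in $\alpha=l,\beta=0$ or $\alpha=0,\beta=l.$ The diagonal pairs $\{D^\pm,E^\mp\}$ cannot form an admissible digon inside $\overline{\Omega_{s^\star}}$ because no pair of $\pm 2H/y$-arcs joins diagonal corners inside the domain; I would check this with a maximum-principle / touching argument using the structure of $\gamma.$ For polygons with three or four vertices whose sides include at least one interior $2H/y$-arc, I would decompose them as a union of the digons above and of subregions of $\Omega_{s^\star},$ adding the inequalities and using the full-domain equality $\alpha=\beta+2H\mathcal I(\Omega_{s^\star})$ as a baseline; the strict digon estimates then upgrade the decomposed sum to strict inequalities.

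The main obstacle is the case analysis in step (c), in particular ensuring that no admissible polygon $\mathcal P\subsetneq \overline{\Omega_{s^\star}}$ slips through. The key technical point is that although the four-vertex polygon $\Omega_{s^\star}$ itself sits exactly at equality in \eqref{eq-ineqalphaEbeta}, \emph{any} strict subpolygon gains a strict margin, either from Lemma \ref{lemma-horizdesig} (horizontal digons), from Lemma \ref{lem-vertalign} (vertical digons), or from additively combining these. The role of the smallness of $y_0$ is essentially twofold: to validate the hypotheses of Corollary \ref{cor-exist-particular} for the subsolution, and to bring the configuration into the regime where Proposition \ref{prop-horizalig} and Lemma \ref{lemma-horizdesig} apply.
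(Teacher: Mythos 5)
Your overall strategy (apply Theorem \ref{teo-Cempty}, take the balance equality from the preceding theorem, and check the two strict inequalities by a case analysis on the vertices of $\mathcal P$) is the paper's strategy, but your step (c) contains a genuine gap, and it originates in a misreading of the geometry of $\Omega_s$. In the construction, $B_D$ contains a $P_4$-point and $B_E$ a $P_1$-point: $B_D$ bulges \emph{upward into} the domain and $B_E$ dips \emph{downward into} it (that is why they meet at $s_0$); consequently $B_D^*$ lies below $B_D$ and $B_E^*$ above $B_E$, not as you wrote. More importantly, if a horizontal digon on $\{D^-,D^+\}$ or $\{E^-,E^+\}$ existed inside $\overline{\Omega_{s^\star}}$, one of its sides would be the $B$-arc itself, so the binding condition is $2\beta<l-2H\mathcal I(\mathcal P)$, i.e. $l(B)+2H\mathcal I(\mathcal P)<l(\sigma)$ for the companion arc $\sigma$. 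Lemma \ref{lemma-horizdesig} gives $l(\gamma^-)<l(\gamma^+)+2H\mathcal I$, which is an inequality of the opposite type (it is the $\alpha$-type inequality for a lens whose relevant side is an $A$-arc); it does not yield, and cannot yield, the $\beta$-inequality you need, which would in fact be false in general since the companion arc is the shorter one. The correct resolution, and what the paper proves, is that such digons do not exist: by Proposition \ref{prop-horizalig}, $B_D$ (resp.\ $B_E$) is the \emph{only} $2H/y$-arc in $\overline{\Omega_{s^\star}}$ joining $D^-$ to $D^+$ (resp.\ $E^-$ to $E^+$); the other candidate arcs lie below the chord of $B_D$, above the chord of $B_E$, or are the long arc of $\gamma$ through $P_1$, all outside $\overline\Omega$.

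The second gap is the diagonal configurations. You dismiss digons on $\{D^\pm,E^\mp\}$ with an unproved ``maximum-principle/touching argument,'' and your plan to decompose $3$- and $4$-vertex polygons into horizontal and vertical digons cannot reach the polygons that contain an interior $2H/y$-arc $\delta$ joining diagonal corners (for instance $A^-\cup B_E\cup\delta$), which are precisely the cases that require work. The paper does not rule these out; it proves the inequalities for them by projecting $\delta$ (completed by a horizontal segment) onto the convex lens bounded by $A^-$ and its reflection, obtaining $l(\delta)+\beta_i>\alpha^-$, and then combining this with the equality $2H\mathcal I(\Omega)=\alpha^++\alpha^--\beta_D-\beta_E$ and with $\mathcal I(\mathcal P)<\mathcal I(\Omega)$. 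Without an estimate of this kind, the digon on a diagonal pair and the $3$-vertex polygons are simply not covered by your argument. (A smaller point: the subsolution cannot be obtained ``by rescaling,'' since the ratio $R/y_0$ in Corollary \ref{cor-exist-particular} is invariant under the dilation $(x,y)\mapsto(\lambda x,\lambda y)$; the smallness hypothesis has to be imposed directly, as the paper does.)
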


\begin{proof}
Let us denote $\Omega_{s^\star}$ by $\Omega.$ From the last result, 
\begin{equation}\label{eq-equality}
\alpha=\beta+2H\mathcal{I}\left(\Omega\right).
\end{equation}

We assume $y_0$ is small enough so that a subsolution exists in $\Omega.$

It remains to see that for any admissible polygon $\mathcal{P}$ properly contained in $\overline{\Omega},$ it holds that
$$2\alpha <\ell+2H\mathcal{I}(\mathcal{P}) \text{ and }
2\beta <\ell-2H\mathcal{I}(\mathcal{P}).$$

We analyse each possibility depending on the number of vertices of $\mathcal{P}:$
\begin{enumerate}
\item If $\mathcal{P}$ has two vertices, three configurations are possible:
\begin{enumerate}
\item[1.1] Two vertices horizontally aligned: Actually this configuration is not possible because $B_1$ and $B_2$ are the only $2H/y-$arcs in $\overline{\Omega}$ connecting these points;
\item[1.2] Two vertices vertically aligned:
The unique possibility inside $\overline{\Omega}$ is to have $\partial\mathcal{P}$ as an $A-$type arc (either $A^-$ or $A^+$) and its reflection about the vertical line that connects its endpoints. For this case, $\ell=2\alpha<\ell+2H\mathcal{I}(\mathcal{P})$ and $2\beta=0<\ell-2H\mathcal{I}(\mathcal{P})$ from Lemma \ref{lem-vertalign}.
\item[1.3] The vertices are the ends of a diagonal of $R;$ say $D^-$ and $E^+.$ The first inequality is trivial. Let us check the second one.

Let us denote by $\alpha^-$ and $\alpha^+$ the length of the arcs $A^-$ and $A^+,$ respectively. We know $\alpha^-=\alpha^+.$ And let us denote by $\beta_D$ and $\beta_E$ the length of the arcs $B_D$ and $B_E,$ respectively.

Let $\delta$ be a $2H/y-$arc from $D^-$ to $E^+.$ If the curvature vector of $\delta$ points to the left (notice it points in only one direction because since $\delta$ is contained in $\bar{\Omega}$ it cannot have neither a $P_1-$point nor a $P_4-$point), the Maximum Principle implies that $\delta$ is outside the domain $U^-$ bounded by $A^-$ and its reflection about the vertical line through its endpoints. Consider the curve obtained joining $\delta$ to the horizontal segment $h$ from $E^+$ to $E^-.$ This curve connects $D^-$ to $E^-$ and is outside the convex domain $U^-.$ Therefore its length is greater than the length of its projection on $U^-,$ which is exactly the reflection of $A^-.$ Therefore, $\ell(\delta)+\ell(h)>\alpha^-$ and then $\ell(\delta)+\beta_i>\alpha^-$ for both $i=D$ and $E.$ The same argument applies if $\delta$ has curvature vector pointing to the right, with $A^+$ replacing $A^-.$

Since $\beta=0$ and
$$2H\mathcal{I}(\mathcal{P})<2H\mathcal{I}(\Omega)=\alpha^++\alpha^--\beta_D-\beta_E<\delta_1+\delta_2=\ell,$$ the second inequality follows.

%By looking at the shape of the $2H/y-$curves, we see that there are three possibilities for $\delta:$
%
%-$\delta$ is a subarc of a $2H/y-$curve between its $P_1$ and $P_2^+.$
%
%-$\delta$ is a subarc of a $2H/y-$curve between its $P_1$ and $P_3$ points and contains $P_2^+.$
%
%-$\delta$ is a subarc of a $2H/y-$curve between its $P_1$ and $P_3$ points and contains $P_2^-.$
%
%
%
%
%Let $U_{2}$ be the domain bounded by $A_2$ and its reflection about the vertical line that connects its endpoints. For the first and third cases, notice that $\delta\cap U_{2}=-Q_1$ because if it were not so, $\delta$ would get in and out of $U_{2}$ and this would contradict the Tangency principle. Consequently the length of $\delta$ is greater than the length of its projetion on $U_{21},$ which is the reflection of $A_2.$ Hence $l(\delta)>\alpha_2=\alpha_1.$ For the second case, the same argument with $U_{2}$ replaced by $U_1$ (bounded by $A_1$ and its reflection) implies $l(\delta)>\alpha_1.$ Therefore in all cases $l(\delta)>\alpha_1.$
%
%The first inequality is then a straight consequence of $l(\delta)>\alpha_1$ and for the second we must verify that $2HI(P)<l(\delta_1)+l(\delta_2).$
%It holds that $2HI(P)<2HI(\Omega)=\alpha_1+\alpha_2-\beta_1-\beta_2<\alpha_1+\alpha_2\leq \delta_1+\delta_2.$
%
%Since reflections about vertical lines are isometries, the other diagonal possibility is equivalent to this one and we verified that any polygon with two vertices satisfies the two inequelities.
\end{enumerate}

\item If $\mathcal{P}$ has 3 vertices, two configurations are possible:
\begin{enumerate}
\item[2.1] $\mathcal{P}$ consists in one $A-$type arc, say $A^-,$ one $B-$type arc, say $B_E,$ and one interior arc $\delta$ as described above.

Here $\alpha=\alpha^-$ and, as shown above, $2\alpha<(\alpha^-+\beta_E+\delta)\leq \ell+2H\mathcal{I}(\mathcal{P}).$
Besides, $\beta=\beta_E$ and the inequality $2\beta<\ell-2H\mathcal{I}(\mathcal{P})$ is equivalent to
$\beta_E+2H\mathcal{I}(\mathcal{P})<\alpha^-+\delta,$ which from \eqref{eq-equality} is equivalent to
$$2\alpha^--\beta_D-2H\mathcal{I}(\Omega)+2H\mathcal{I}(\mathcal{P})<\alpha^-+\delta,$$ which is implied by the facts that
$\alpha^--\beta_D<\delta$ and $2H\mathcal{I}(\mathcal{P})<2H\mathcal{I}(\Omega).$ 

\item[2.2] $\mathcal{P}$ consists in one $B-$type arc ($B_E$ or $B_D$), and two interior arcs: one arc $\delta$ as described above and one arc $A',$ the reflection from either $A^-$ or $A^+$.
Here $\alpha=0$ and the first inequality is trivial.
The second one is again equivalent to $\beta_i+2H\mathcal{I}(\mathcal{P})<\alpha^-+\delta,$ which holds as above.
\end{enumerate}

\item If $\mathcal{P}$ has 4 vertices, two configurations are possible:
\begin{enumerate}
\item[3.1] $\mathcal{P}$ contains three boundary arcs and one arc $A',$ reflected from $A^-$ or $A^+.$
The first inequality is trivial.
For the second one we need that $\beta+2H\mathcal{I}(\mathcal{P})=\beta_D+\beta_E+2H\mathcal{I}(\mathcal{P})<2\alpha^-.$ But $\mathcal{I}(\mathcal{P}) <\mathcal{I}(\Omega)$ and \eqref{eq-equality} imply that $\beta_D+\beta_E+2H\mathcal{I}(\mathcal{P})<2\alpha^-.$

\item[3.2] $\mathcal{P}$ contains the two boundary arcs $B_D$ and $B_E$, and two $A'$s, reflected from $A^-$ and $A^+.$
Once again the first inequality is trivial and the second one follows as above.

\end{enumerate}
\end{enumerate}

\end{proof}

\begin{flushleft}
\textsc{Departamento de Matem\'atica}

\textsc{Universidade Federal de Santa Maria}

\textsc{Av. Roraima 1000, Santa Maria RS, 97105-900, Brazil}

\textit{Email:} patricia.klaser@ufsm.br
\end{flushleft}

\begin{flushleft}
\textsc{Mathematics Department}

 \textsc{Princeton University}

\textsc{Fine Hall, Washington Road, Princeton NJ, 08544, USA}

\textit{Email:} amenezes@math.princeton.edu
\end{flushleft}

\end{document}